
\documentclass[12pt,a4paper]{article}%
\usepackage[utf8]{inputenc}
\usepackage{amsmath}
\usepackage{amsfonts}
\usepackage{amssymb}
\usepackage{graphicx}
\usepackage[space]{grffile}
\usepackage{hyperref}%
\setcounter{MaxMatrixCols}{30}
\providecommand{\U}[1]{\protect\rule{.1in}{.1in}}
\newtheorem{theorem}{Theorem}
\newtheorem{theorem*}{Example}

\newtheorem{conjecture}[theorem]{Conjecture}
\newtheorem{corollary}[theorem]{Corollary}

\newtheorem{definition}[theorem]{Definition}

\newtheorem{lemma}[theorem]{Lemma}

\newtheorem{proposition}[theorem]{Proposition}
\newtheorem{remark}[theorem]{Observation}

\newenvironment{proof}[1][Proof]{\noindent\textbf{#1.} }{\ \hfill \rule{0.5em}{0.5em}\bigskip}
\graphicspath{{D:/Dropbox/Riste-Sedlar/Totally Irregular/Paper3/Slike/}}
\textwidth=16cm
\hoffset=-1.2cm
\voffset=-2.cm
\textheight=23cm
\begin{document}

\title{Local Irregularity Conjecture vs. cacti}
\author{Jelena Sedlar$^{1,3}$,\\Riste \v Skrekovski$^{2,3}$ \\[0.3cm] {\small $^{1}$ \textit{University of Split, Faculty of civil
engineering, architecture and geodesy, Croatia}}\\[0.1cm] {\small $^{2}$ \textit{University of Ljubljana, FMF, 1000 Ljubljana,
Slovenia }}\\[0.1cm] {\small $^{3}$ \textit{Faculty of Information Studies, 8000 Novo
Mesto, Slovenia }}\\[0.1cm] }
\maketitle

\begin{abstract}
A graph is locally irregular if the degrees of the end-vertices of every edge
are distinct. An edge coloring of a graph $G$ is locally irregular if every
color induces a locally irregular subgraph of $G$. A colorable graph $G$ is
any graph which admits a locally irregular edge coloring. The locally
irregular chromatic index $\chi_{%
\rm{irr}%
}^{\prime}(G)$ of a colorable graph $G$ is the smallest number of colors
required by a locally irregular edge coloring of $G$. The Local Irregularity
Conjecture claims that all colorable graphs require at most $3$ colors for
locally irregular edge coloring. Recently, it has been observed that the
conjecture does not hold for the bow-tie graph $B$ \cite{SedSkreMasna}. Cacti
are important class of graphs for this conjecture since $B$ and all
non-colorable graphs are cacti. In this paper we show that for every colorable
cactus graph $G\not =B$ it holds that $\chi_{%
\rm{irr}%
}^{\prime}(G)\leq3.$ This makes us to believe that $B$ is the only colorable
graph with $\chi_{%
\rm{irr}%
}^{\prime}(B)>3$, and consequently that $B$ is the only counterexample to the
Local Irregularity Conjecture.

\end{abstract}

\textit{Keywords:} locally irregular edge coloring; Local Irregularity
Conjecture; cactus graphs.

\textit{AMS Subject Classification numbers:} 05C15

\section{Introduction}

All graphs mentioned in this paper are considered to be simple, finite and
connected. The main interest of this paper are edge colorings of a graph. A
graph is said to be \emph{locally irregular} if the degrees of the two
end-vertices of every edge are distinct. A \emph{locally irregular }%
$k$\emph{-edge coloring}, or $k$\emph{-liec} for short, is any $k$-edge
coloring of $G$ every color of which induces a locally irregular subgraph of
$G$. Since in this paper we will deal only with the locally irregular edge
colorings, a graph which admits such a coloring will be called
\emph{colorable}. The \emph{locally irregular chromatic index} $\chi_{%
\rm{irr}%
}^{\prime}(G)$ of a colorable graph $G$ is defined as the smallest $k$ such
that $G$ admits a $k$-liec. Notice that isolated vertices of a graph $G$ do
not influence the local irregularity of an edge coloring, so in every graph we
will consider if such vertices arise by edge deletion we will ignore them.

The first question is to establish which graphs are colorable. To answer this
question, we first need to introduce a special class $\mathfrak{T}$ of cactus
graphs. First, a \emph{cactus graph} is any graph with edge disjoint cycles.
Now, a class $\mathfrak{T}$ is defined as follows:

\begin{itemize}
\item the triangle $K_{3}$ is contained in $\mathfrak{T}$,

\item for every graph $G\in\mathfrak{T}$, a graph $H$ which also belongs to
$\mathfrak{T}$ can be constructed in a following way: a vertex $v\in V(G)$ of
degree $2,$ which belongs to a triangle of $G,$ is identified with an
end-vertex of an even length path or with the only vertex of degree one in a
graph consisting of a triangle and an odd length path hanging at one of the
vertices of triangle.
\end{itemize}

\noindent Obviously, graphs from $\mathfrak{T}$ are cacti in which cycles are
vertex disjoint triangles, which are connected by an odd length paths, and
besides triangles and odd length paths connecting them a cactus graph $G$ from
$\mathfrak{T}$ may have only even length paths hanging at any vertex $v$ such
that $v$ belongs to a triangle of $G$ and $d_{G}(v)=3.$ It was established
that the only non-colorable graphs are odd length paths, odd length cycles and
cactus graphs from $\mathfrak{T}$ \cite{Baudon5}. That settles the question of
non-colorable graphs, and for colorable graphs the interesting problem is to
determine the smallest number of colors required by a locally irregular edge
coloring of any graph. Regarding this problem, the following conjecture was
proposed \cite{Baudon5}.

\begin{conjecture}
[Local Irregularity Conjecture]\label{Con_nonTareColorable}For every colorable
connected graph $G$, it holds that $\chi_{%
\rm{irr}%
}^{\prime}(G)\leq3.$
\end{conjecture}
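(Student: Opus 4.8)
The plan is to attack the statement in the standard decomposition framework, in which a locally irregular edge coloring with at most three colors is exactly a partition of $E(G)$ into at most three subgraphs, each of whose components contains no edge joining two vertices of equal degree. Since $G$ is assumed colorable, the characterization recalled above lets me exclude at the outset the only non-colorable graphs, namely odd paths, odd cycles, and the members of $\mathfrak{T}$; thus I may assume $G$ contains a vertex of degree at least $3$ (or is an even cycle, handled directly). I would then set up an induction on $|E(G)|$, or equivalently on the block structure of $G$, the reduction step being to strip off a carefully chosen locally irregular subgraph and recolor the remainder by the inductive hypothesis. To make the induction tractable I would split the argument according to the density of $G$, treating a \textbf{dense regime} and a \textbf{sparse regime} by different techniques.

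In the dense regime, where the minimum degree $\delta(G)$ is large, the aim is to extract most of $G$ into two locally irregular classes obtained from an almost-regular partition of the edges, routing edges so that adjacent vertices receive unequal splits of their degrees, and then absorbing the few residual equal-degree adjacencies into a third color. This is the regime in which averaging or probabilistic arguments are expected to succeed, because a generic two-way split of a high-degree vertex almost never matches a neighbor's split; the third color is needed only to repair a bounded number of local conflicts, and the inductive hypothesis finishes the sparse remainder.

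The sparse regime is where $G$ has many vertices of degree $1$ and $2$, and here I would decompose $G$ along its cut vertices into blocks (maximal $2$-connected subgraphs together with bridges), color each block, and then reconcile the colorings at the cut vertices. The delicate point is that the degree of a cut vertex $v$ inside a color class is the \emph{sum} of its contributions over all incident blocks, while the local-irregularity condition at $v$ compares that global degree against the degrees of the neighbors of $v$, each of which lies in a single block. I would first color the pendant pieces — pendant paths and pendant triangles, which are precisely the fragments that generate the parity constraints defining $\mathfrak{T}$ — and then glue the block colorings together by rotating colors at each cut vertex so as to destroy every surviving equal-degree adjacency, maintaining throughout an invariant on the available color at each cut vertex that is strong enough to propagate the induction.

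The hard part will be exactly this sparse, cactus-like regime, since it is there that the parity phenomena underlying the non-colorable family $\mathfrak{T}$ resurface as a tightness that forces three colors and can resist global reconciliation. The real obstacle is that at a cut vertex the forced degree in some color may collide with the degree of a neighbor lying across a triangle or an odd pendant path, and no local recoloring removes all such collisions at once; a correct proof must therefore supply a global argument controlling cut-vertex degrees in all blocks simultaneously. This is precisely the configuration in which the bow-tie graph $B$ already fails to admit a $3$-coloring, so I expect any attempt at a fully general proof to break down at this step — which is consistent with the statement being a conjecture whose scope, as the abstract already indicates, must be narrowed to exclude $B$.
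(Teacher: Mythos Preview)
The statement you were asked to prove is not a theorem but a \emph{conjecture}, and the paper does not prove it; on the contrary, the paper recalls (citing \cite{SedSkreMasna}) that the bow-tie graph $B$ is a counterexample, so the conjecture as stated is false. There is therefore no ``paper's own proof'' to compare your proposal against.

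You clearly already understand this: your final paragraph correctly identifies the cactus-like sparse regime as the obstruction and explicitly names $B$ as the configuration where any attempted argument must break. What precedes that paragraph, however, is not a proof but a high-level narrative of a strategy (dense regime via probabilistic/averaging arguments, sparse regime via block decomposition and gluing at cut vertices) that you yourself anticipate will fail. None of the steps is carried out, and several of the assertions --- for instance, that in the dense regime a ``generic two-way split'' leaves only boundedly many conflicts absorbable by a third color, or that block colorings can always be reconciled at cut vertices by rotating colors --- are exactly the kinds of claims that would require substantial work and, in the sparse case, are simply untrue in general because of $B$.

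In short: there is no gap to name beyond the fundamental one that the target statement is false, which you already acknowledge. The appropriate response to this item is not a proof sketch but the one-line observation that Conjecture~\ref{Con_nonTareColorable} is disproved by the bow-tie graph $B$, and that the paper's actual contribution is to show that $B$ is the \emph{only} counterexample among cacti.
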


It was recently shown \cite{SedSkreMasna} that the Local Irregularity
Conjecture does not hold in general, since there exists a colorable cactus
graph, the so called bow-tie graph $B$ shown in Figure \ref{Fig_masna}, for
which $\chi_{%
\rm{irr}%
}^{\prime}(G)=4$. There, the following weaker version of Conjecture
\ref{Con_nonTareColorable} was proposed.

\begin{figure}[h]
\begin{center}
\includegraphics[scale=0.75]{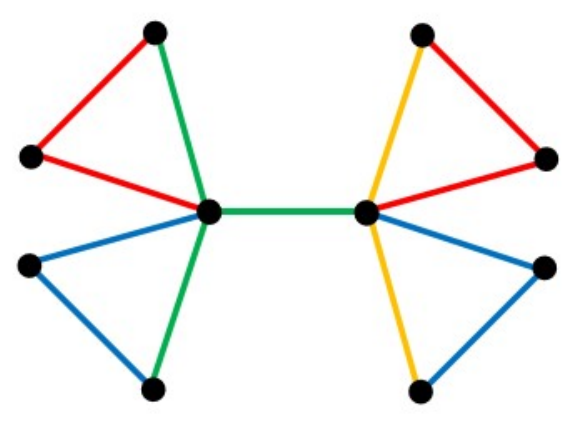}
\end{center}
\caption{The bow-tie graph $B$ and a locally irregular $4$-edge coloring of
it.}%
\label{Fig_masna}%
\end{figure}

\begin{conjecture}
\label{Con_weaker}Every colorable connected graph $G$ satisfies $\chi_{%
\rm{irr}%
}^{\prime}(G)\leq4.$
\end{conjecture}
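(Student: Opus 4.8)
This is the general weak form of the conjecture: it asserts a universal bound of four colors for \emph{every} colorable connected graph, not only cacti, so a complete proof must relinquish all cactus-specific structure. The plan is to combine a reduction to highly connected building blocks with a two-phase coloring scheme that first produces a coarse edge decomposition and then repairs the sparse set of edges where local irregularity fails.

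First I would induct along the block--cut-tree. A cut-vertex lets one peel off a pendant block, color the rest recursively with at most four colors, and then permute the palette on the pendant block so that at the shared vertex $v$ no color class places two equal-degree endpoints on an edge; here one must remember that the color-degree of $v$ is the \emph{sum} of its color-degrees over the incident blocks, so the gluing condition couples the two colorings. Bridges are the delicate case, since a lone edge has two degree-one endpoints and therefore cannot form a color class by itself; a bridge must inherit the color of an adjacent edge, and the exceptional blocks from the characterization of non-colorable graphs (odd paths, odd cycles, and members of $\mathfrak{T}$) must be absorbed into a neighboring block before one recurses. This step reduces the problem to $2$-connected graphs.

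For a $2$-connected graph $G$ the core idea is to find a spanning subgraph $F$ such that $F$ and $G-E(F)$ each have tightly controlled degrees, for instance via a parity-constrained subgraph or an Eulerian-type orientation; this yields a base two-coloring that is locally irregular except on a sparse conflict set, namely the monochromatic edges whose endpoints carry equal color-degree. I would then deploy the third and fourth colors only on these conflict edges, using $2$-connectivity to supply alternating routes along which a color can be rerouted so that each conflict is resolved without destroying the controlled degrees elsewhere.

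The hard part is exactly this conflict resolution. Local irregularity constrains the entire degree sequence of each color class, which makes it global and brittle: recoloring one equal-degree edge changes the color-degrees at both of its endpoints and can manufacture fresh violations, so a naive greedy repair need not terminate. Moreover the bow-tie $B$ shows that three colors genuinely fail, so the argument must be tight enough to use all four colors and yet never be forced to a fifth. Closing the gap between the large constant bounds that current decomposition theorems provide and the target value four is, I expect, the central obstacle; a successful proof will probably require a global discharging or algebraic device that controls all of the color-degree sequences simultaneously, rather than the edge-by-edge repairs that suffice on cacti.
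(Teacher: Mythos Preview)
The statement you are attempting to prove is Conjecture~\ref{Con_weaker}, and the paper does \emph{not} prove it: it is stated as an open problem taken from~\cite{SedSkreMasna}. The paper's contribution is restricted to cacti, namely Theorem~\ref{Tm_main} and its corollary, which together show that every colorable cactus $G\neq B$ satisfies $\chi'_{\mathrm{irr}}(G)\le 3$. So there is no ``paper's own proof'' of this conjecture to compare against.

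Your proposal is not a proof but a research programme, and you say so yourself: the block--cut-tree reduction to $2$-connected graphs is reasonable (though the gluing at cut-vertices is more delicate than you indicate, since color-degrees add across blocks and a four-color palette gives you only $4!$ permutations to fix all conflicts at once), but the core step---finding a spanning subgraph $F$ so that both $F$ and $G-E(F)$ have ``tightly controlled degrees'' and then repairing a ``sparse conflict set'' with two spare colors---is exactly the part that nobody knows how to do. The best general bound in the literature is $220$ colors~\cite{Luzar}, down from $328$~\cite{Bensmail}, and the reason these bounds are so far from $4$ is precisely that the conflict-repair step cascades: recoloring one edge changes two color-degrees and can create new conflicts elsewhere, with no evident potential function forcing termination. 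Your final paragraph essentially concedes this. As written, the proposal identifies the known obstacles but does not overcome any of them; it is an outline of why the conjecture is hard, not a proof that it is true.
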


Even though the graph $B$ contradicts Local Irregularity Conjecture, many
partial results support this conjecture. For example, the conjecture holds for
trees \cite{Baudon6}, unicyclic graphs and cacti with vertex disjoint cycles
\cite{SedSkreMasna}, graphs with minimum degree at least $10^{10}$
\cite{Przibilo}, $k$-regular graphs where $k\geq10^{7}$ \cite{Baudon5}. The
conjecture was also considered for general graphs, but there the upper bound
on $\chi_{%
\rm{irr}%
}^{\prime}(G)$ was first established to be $328$ \cite{Bensmail}, and then it
was lowered to $220$ \cite{Luzar}.

The results so far indicate that the cacti are relevant class of graphs for
the locally irregular edge colorings, since the non-colorable graphs are cacti
and the only known counterexample for the Local Irregularity Conjecture is
graph $B$ which is also a cactus graph. This motivated further investigation
of cacti, so in \cite{SedSkrePaper2, Yongtang} it was established that all
colorable cacti satisfy $\chi_{%
\rm{irr}%
}^{\prime}(G)\leq4$. In this paper we establish that $B$ is the only colorable
cactus graph with $\chi_{%
\rm{irr}%
}^{\prime}(G)=4.$ We conclude the paper conjecturing that $B$ is the only
counterexample to the Local Irregularity Conjecture in general. Throughout the
paper the proofs are omitted, since they are sometimes lengthy and technical,
and they are provided in the appendix at the end of the paper.

\section{Preliminaries}

Let us introduce some basic notions related to colorings, then several results
already known in literature mainly regarding trees and unicyclic graphs, and
then finally some basic notions regarding cacti.

The colors of an edge coloring will be denoted by letters $a,$ $b,$ $c,\ldots$
By $\phi_{a}(G)$ (resp. $\phi_{a,b}(G),$ $\phi_{a,b,c}(G)$) we will usually
denote an edge coloring of $G$ using one (resp. two, three) colors. For an
edge coloring $\phi_{a,b,c}$ of $G$ and a vertex $u\in V(G),$ by $\phi
_{a,b,c}(u)$ we will denote the set of colors incident to $u.$ If
$\phi_{a,b,c}(u)$ contains $k$ colors, then we say $u$ is $k$\emph{-chromatic}%
, specifically in the cases of $k=1$ and $k=2$ we say $u$ is
\emph{monochromatic} and \emph{bichromatic}, respectively. If $\phi_{a,b}$ is
an edge coloring of a graph $G$ which uses colors $a$ and $b,$ then
$\phi_{c,d}$ will denote the edge coloring of $G$ obtained from $\phi_{a,b}$
by replacing color $a$ with $c$ and color $b$ with $d$. In particular, the
edge coloring $\phi_{b,a}$ obtained from $\phi_{a,b}$ by swapping colors $a$
and $b$ will be the \emph{inversion} of $\phi_{a,b}.$

The $a$\emph{-degree} of a vertex $u\in V(G)$ is defined as the number of
edges incident to $u$ which are colored by $a$, and it is denoted by
$d_{G}^{a}(u).$ The same name and notation goes for any other color besides
$a$. Now, for a vertex $u\in V(G)$ with $k$ incident edges, say $uv_{1}%
,\ldots,uv_{k},$ colored by $a$, the $a$\emph{-sequence }is defined as
$d_{G}^{a}(v_{1}),\ldots,d_{G}^{a}(v_{k}).$ It is usually assumed that
vertices $v_{i}$ are denoted so that the $a$-sequence is non-increasing.

Let $G$ be a graph and $G_{0}$ a subgraph of $G.$ We say that a $k$-liec
$\phi$ of $G$ is an \emph{extension} of $k$-liec $\phi^{0}$ of $G_{0},$ if
$\phi(e)=\phi^{0}(e)$ for every $e\in E(G_{0}).$ We also say that we have
\emph{extended} $\phi^{0}$ to $G.$ We also need to introduce a \emph{sum of
colorings}, useful when combining two different colorings. Let $G_{i},$ for
$i=1,\ldots,k,$ be graphs with pairwise disjoint edge sets, and let $G$ be a
graph such that $E(G)=\cup_{i=1}^{k}E(G_{i}).$ If $\phi_{a,b,c}^{i}$ is an
edge coloring of the graph $G_{i}$ for $i=1,\ldots,k,$ then $\phi_{a,b,c}%
=\sum_{i=1}^{k}\phi_{a,b,c}^{i}$ will denote the edge coloring of $G$ such
that $e\in E(G_{i})$ implies $\phi_{a,b,c}(e)=\phi_{a,b,c}^{i}(e).$

Let $G$ be a colorable graph which admits a $k$-liec for $k\geq3.$ If $G$
contains a leaf $u,$ then a graph $G^{\prime}$ obtained from $G$ by appending
an even length path $P$ at $u$ also admits a $k$-liec. An \emph{ear} of a
graph $G$ is any subgraph $P$ of $G$ such that $P$ is a path and for any
internal vertex $u\in V(P)$ it holds that $d_{G}(u)=2.$ If $G$ contains an ear
$P_{q}=u_{0}u_{1}\ldots u_{q}$ of length $q\geq3,$ then at least one internal
vertex $u_{i}$ of $P_{q}$ is bichromatic by a $k$-liec of $G,$ so if
$G^{\prime}$ is obtained from $G$ by replacing an ear $P_{q}$ by an ear
$P_{q+2r}$ of the length $q+2r,$ then $G^{\prime}$ also admits a $k$-liec. In
both situations we say $G$ is obtained by \emph{trimming} $G^{\prime}$.

\begin{remark}
\label{Obs_trimmed}Let $G^{\prime}$ be a colorable graph which admits a
$k$-liec for $k\geq3.$ If $G$ is obtained by trimming $G^{\prime},$ then $G$
also admits a $k$-liec.
\end{remark}

A graph $G$ is \emph{totally trimmed} if it does not contain a pending path of
length $\geq3$ and an ear of length $\geq5.$ Because of Observation
\ref{Obs_trimmed}, in the rest of the paper we will tacitly assume every graph
$G$ is totally trimmed.

A tree rooted at its leaf will be called a \emph{shrub}. The \emph{root edge}
of a shrub $G$ is the only edge incident to the root vertex of $G.$ An edge
coloring of a shrub $G$ is said to be an \emph{almost locally irregular }%
$k$\emph{-edge coloring}, or $k$\emph{-aliec} for short, if either it is a
$k$-liec of $G$ or it is a coloring such that only the root edge is locally
regular. A \emph{proper }$k$\emph{-aliec} is any $k$-aliec which is not a
$k$-liec. Notice that in the case of a proper $k$-aliec, the root edge of the
shrub is an isolated edge of its color, i.e. it is not adjacent to any other
edge of the same color. Let us state few results regarding trees from
\cite{Baudon6}.

\begin{theorem}
\label{Tm_BaudonSchrub}Every shrub admits a $2$-aliec.
\end{theorem}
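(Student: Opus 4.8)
The plan is to proceed by structural induction on the shrub, peeling off the root edge and its incident subtrees. Let $G$ be a shrub rooted at a leaf $r$, with root edge $e = rv$. The vertex $v$ has some degree $d = d_G(v) \geq 1$; deleting $r$ splits $G - r$ into $d-1$ smaller shrubs $G_1, \dots, G_{d-1}$, each rooted at $v$ (we root each $G_i$ at $v$, so its root edge is the edge of $G_i$ incident to $v$). The base case is a single edge, which is trivially a $2$-aliec (indeed the lone edge is locally regular, matching the "proper aliec" clause). For the inductive step I would apply the inductive hypothesis to each $G_i$ to obtain a $2$-aliec $\phi^i$ of $G_i$, then combine these with a suitable coloring of $e$ to build a $2$-aliec of $G$, taking the sum $\phi = \phi_e + \sum_{i=1}^{d-1}\phi^i$ after possibly inverting some of the $\phi^i$'s.

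**The key technical point** is controlling the degree situation at $v$. Each $\phi^i$ is either a $2$-liec of $G_i$ or a proper $2$-aliec whose only locally regular edge is the root edge of $G_i$ at $v$; in the latter case that root edge is isolated in its color. For each $i$, let $a_i$ be the color of the root edge of $G_i$. By inverting $\phi^i$ when necessary (which preserves the aliec property by the inversion remark), I can steer the colors $a_i$ and balance how many root edges at $v$ carry color $a$ versus color $b$. Now I choose the color of $e$ and finalize: the goal is to make every edge incident to $v$ (other than possibly $e$ itself) locally irregular in $G$. An edge $vw$ colored $a$ fails to be locally irregular in $G$ only if $d_G^a(v) = d_G^a(w)$; since we only changed degrees at $v$ by merging the $G_i$ at $v$ and adding $e$, and each $\phi^i$ was already good inside $G_i$ except possibly at its own root edge, I must verify these merged $a$-degrees at $v$ differ from the $a$-degrees of the neighbours $w$.

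**The main obstacle** I expect is a parity/counting argument at the vertex $v$: I need to show that among the at most two colors, I can always assign colors to the root edges $\phi^i$ (via inversions) and to $e$ so that $d_G^a(v)$ and $d_G^b(v)$ avoid the finitely many "bad" values dictated by the $a$-degrees and $b$-degrees of $v$'s neighbours — and crucially, that at most one edge at $v$ (namely $e$) is allowed to remain locally regular. The tightest case is when $v$ has small degree (say $d_G(v) = 2$ or $3$) and the neighbour degrees conspire; here one exploits the freedom that a proper aliec's root edge is an \emph{isolated} edge of its color, so reassigning it does not create a monochromatic obstruction elsewhere. I would organize the argument into cases according to $d_G(v)$ and the number of the $\phi^i$ that are proper alieci, and in each case exhibit the inversions and the color of $e$ explicitly, checking local irregularity edge by edge at $v$. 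Once $v$ is handled, all other vertices are automatically fine because their incident edges and color-degrees were unchanged from some $G_i$ where $\phi^i$ was already locally irregular there.
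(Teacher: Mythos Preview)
The paper does not supply a proof of this statement; Theorem~\ref{Tm_BaudonSchrub} is quoted from \cite{Baudon6} as a known result, so there is no in-paper argument to compare against. Your inductive scheme---decompose at the neighbour $v$ of the root into sub-shrubs $G_1,\dots,G_{d-1}$, apply the hypothesis to each, then use inversions of the $\phi^i$ together with the choice of color for $e=rv$ to steer $d_G^a(v)$ and $d_G^b(v)$ away from the finitely many forbidden values $k_i=d_{G_i}^{a_i}(w_i)$---is the standard approach and matches the argument in the cited reference. The observation you isolate, that for a proper $2$-aliec the root edge is isolated in its color (so $k_i=1$) while for a genuine $2$-liec the local irregularity at the root forces $k_i\ge 2$, is exactly what makes the balancing at $v$ go through; with $d-1$ constraints and $d$ edges at $v$ whose colors you may choose freely (and with $e$ allowed to remain locally regular), a short pigeonhole over the possible values of $d_G^a(v)\in\{0,\dots,d\}$ shows that some assignment always avoids all conflicts.
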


\begin{theorem}
\label{Tm_BaudonTree}For every colorable tree $T,$ it holds that $\chi_{%
\rm{irr}%
}^{\prime}(T)\leq3.$ Moreover, $\chi_{%
\rm{irr}%
}^{\prime}(T)\leq2$ if $\Delta(T)\geq5.$
\end{theorem}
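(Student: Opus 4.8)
The plan is to reduce to a decomposition at a vertex of maximum degree and feed the pieces to Theorem~\ref{Tm_BaudonSchrub}. If $\Delta(T)\le 2$ then $T$ is a path, and since $T$ is colorable its length is even; cutting $T$ into consecutive length-$2$ segments (each locally irregular on its own) and coloring these segments alternately with two colors produces a $2$-liec, so $\chi_{\mathrm{irr}}'(T)\le 2$. So assume $\Delta(T)\ge 3$ and fix $v$ with $d_T(v)=\Delta(T)$. Reattaching the $\Delta$ edges at $v$ to the components of $T-v$ decomposes $T$ into shrubs $S_1,\dots,S_\Delta$, where $S_i$ consists of the component of $T-v$ at the neighbor $u_i$ of $v$ together with the root edge $e_i=vu_i$, rooted at $v$ (which is a leaf of $S_i$). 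By Theorem~\ref{Tm_BaudonSchrub} each $S_i$ admits a $2$-aliec $\phi_i$ in colors $a,b$, and passing to the inversion of $\phi_i$ lets us prescribe independently whether $e_i$ receives $a$ or $b$.

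Next I would form $\phi=\sum_{i=1}^{\Delta}\phi_i$ and locate its locally regular edges. Any edge of $S_i$ other than $e_i$ keeps both of its end-vertices and all their color-degrees when one passes from $S_i$ to $T$, hence it stays locally irregular; thus the only edges of $\phi$ that can be locally regular are the root edges $e_1,\dots,e_\Delta$. A root edge $e_i$ of color $x$ is locally irregular in $T$ precisely when $d_T^x(v)\ne d_{S_i}^x(u_i)$, where $d_T^x(v)$ is simply the number of root edges colored $x$, and $d_{S_i}^x(u_i)$ equals $1$ if $\phi_i$ is a proper aliec (the root edge is then isolated in its color) and is at least $2$ if $\phi_i$ is a genuine $2$-liec; this value $f_i$ is unchanged by inversion. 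So the whole problem reduces to a covering problem at $v$: assign colors in $\{a,b\}$ to $e_1,\dots,e_\Delta$ so that for every $i$ the number of root edges sharing the color of $e_i$ differs from $f_i$.

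Finally I would solve this covering problem. When $\Delta(T)\ge 5$, the two root-edge color classes have sizes summing to $\Delta\ge 5$, and a finite case analysis (on how many shrubs are proper, how many are genuine, and which values occur among the $f_i$) shows an admissible split always exists --- infeasibility would force so many of the $f_i$ to coincide that there would have to be more than $\Delta$ shrubs, a contradiction; this gives $\chi_{\mathrm{irr}}'(T)\le 2$. For a general colorable tree with $3\le\Delta(T)\le 4$ this two-color covering problem at $v$ may be infeasible, and one then spends the third color --- for instance by dedicating one carefully chosen shrub to the palette $\{b,c\}$ (again via Theorem~\ref{Tm_BaudonSchrub}) to gain one more degree of freedom for the counts at $v$, and/or by recoloring to $c$ those offending root edges lying in proper shrubs, where $d_{S_i}^x(u_i)=1$ so the recoloring does not disturb the interior of $S_i$, while arranging that at least two root edges carry $c$ (so that $d_T^c(v)\ge 2\ne 1$). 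A short case check then yields $\chi_{\mathrm{irr}}'(T)\le 3$ in every case. The step I expect to be the obstacle is this last one: the fiddly but finite bookkeeping needed to certify feasibility of the two-color covering problem for all $\Delta\ge 5$, and --- more delicately --- the fact that plain gluing of shrub aliecs at a single vertex is not by itself sufficient for the general bound, so that the third color has to be routed through the shrubs in a way that creates no new locally regular edge inside them.
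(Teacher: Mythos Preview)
The paper does not give its own proof of Theorem~\ref{Tm_BaudonTree}; it is quoted from Baudon, Bensmail and Sopena~\cite{Baudon6}. What the paper \emph{does} provide is a summary of the mechanism behind that proof: the shrub-based coloring at a vertex $u$ of maximum degree, and Observation~\ref{Observation_sequences}, which records that such a coloring is inversion resistant only for the $a$-sequences $3,2,2$ (when $\Delta=3$) and $4,3,3,2$ (when $\Delta=4$), together with the explicit $3$-liecs $\phi_{a,b}^{1}+\phi_{b,a}^{2}+\phi_{c,b}^{3}$ and $\phi_{a,b}^{1}+\phi_{a,b}^{2}+\phi_{b,a}^{3}+\phi_{c,b}^{4}$ that handle these two cases.

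Your proposal is exactly this shrub-based strategy, so you are on the same route as~\cite{Baudon6} and as the paper's discussion. Two points worth sharpening. First, for $\Delta\ge 5$ your ``finite case analysis'' is not finite in $\Delta$; what one actually needs is the combinatorial lemma underlying Observation~\ref{Observation_sequences}, namely that for any sequence $f_1,\dots,f_\Delta\ge 1$ with $\Delta\ge 5$ there is a split $k_a+k_b=\Delta$ and an assignment avoiding $f_i=k_a$ on the $a$-side and $f_i=k_b$ on the $b$-side. This is true but requires a short argument, not just a count. Second, your handling of the third color is slightly off: in the only inversion-resistant sequences $3,2,2$ and $4,3,3,2$ every $f_i\ge 2$, so no shrub is proper and your ``recolor the proper root edges to $c$'' manoeuvre is vacuous; moreover you do \emph{not} need two root edges in color $c$. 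The construction in the paper's discussion (taken from~\cite{Baudon6}) recolors a \emph{single} shrub to the palette $\{c,b\}$, so that $d^{c}(v)=1$, and this suffices precisely because the corresponding $f_i\ge 2\neq 1$. With these two corrections your sketch becomes the standard proof.
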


According to Theorem \ref{Tm_BaudonSchrub}, a shrub admits a $2$-aliec which
will be denoted by $\phi_{a,b},$ where we assume the root edge of the shrub is
colored by $a$. Now, let $T$ be a tree, $u\in V(T)$ a vertex of maximum degree
in $T,$ and $v_{i}$ all the neighbors of $u$ for $i=1,\ldots,k.$ Denote by
$T_{i}$ a shrub of $T$ rooted at $u$ with the root edge $uv_{i}.$ A
\emph{shrub based} coloring of $T$ is defined by $\phi_{a,b}=\sum_{i=1}%
^{k}\phi_{a,b}^{i},$ where $\phi_{a,b}^{i}$ is an $2$-aliec of $T_{i}.$ Since
we assume that the root edge of $T_{i}$ is colored by $a$ in $\phi_{a,b}^{i},$
this implies $u$ is monochromatic in color $a$ by a shrub based coloring
$\phi_{a,b}$ of $T.$ Obviously, if a shrub based coloring $\phi_{a,b}$ is not
a liec of $T,$ only the edges incident to $u$ may be locally regular by
$\phi_{a,b}.$ Notice that $\binom{k}{2}$ different $2$-edge colorings of $T$
can be obtained from a shrub based coloring $\phi_{a,b}$ by swapping colors
$a$ and $b$ in some of the shrubs $T_{i}$. If none of those colorings is a
liec of $T,$ we say $\phi_{a,b}$ is \emph{inversion resistant}. The following
observation was established in \cite{Baudon6}.

\begin{remark}
\label{Observation_sequences}Let $T$ be a tree, $u\in V(T)$ a vertex of
maximum degree in $T$ and $\phi_{a,b}$ a shrub based coloring of $T$ rooted at
$u.$ The shrub based coloring $\phi_{a,b}$ will be inversion resistant in two
cases only:

\begin{itemize}
\item $d_{T}(u)=3$ and the $a$-sequence of $u$ by $\phi_{a,b}$ is $3,2,2$;

\item $d_{T}(u)=4$ and the $a$-sequence of $u$ by $\phi_{a,b}$ is $4,3,3,2$.
\end{itemize}
\end{remark}

The obvious consequence of the above observation is that in a tree $T$ with
$\chi_{%
\rm{irr}%
}^{\prime}(T)=3$ and $\Delta(T)=3$ (resp. $\Delta(T)=4$) the vertices of
degree $3$ (resp. $4$) must come in neighboring pairs. {\normalsize Also, if
$\chi_{%
\rm{irr}%
}^{\prime}(T)=3$, then a shrub based coloring of $T$ is inversion resistant. A
$3$-liec of such a tree $T$ is obtained from aliecs of shrubs in the following
way: if $d_{T}(u)=3$ then $\phi_{a,b,c}^{T}=\phi_{a,b}^{1}+\phi_{b,a}^{2}%
+\phi_{c,b}^{3}$ is a $3$-liec of $T,$ if $d_{T}(u)=4$ then $\phi_{a,b,c}%
^{T}=\phi_{a,b}^{1}+\phi_{a,b}^{2}+\phi_{b,a}^{3}+\phi_{c,b}^{4}$ is a
$3$-liec of $T.$ Notice that only the root vertex $u$ is $3$-chromatic by
$\phi_{a,b,c}^{T}$, all other vertices in $T$ are $1$- or $2$-chromatic.
Hence, we will call $u$ the \emph{rainbow root} of $\phi_{a,b,c}^{T}$.
Obviously, every vertex $u$ of maximum degree in a tree $T$ with $\chi_{%
\rm{irr}%
}^{\prime}(T)=3$ can be the rainbow root of a $3$-liec of $T$, because either
the degree sequence of $u$ by a shrub based coloring is $3,2,2$ (resp.
$4,3,3,2$) which means the above $3$-liecs can be constructed with $u$ being
the rainbow root, or the shrub based coloring would not be inversion
resistant, which implies $\chi_{%
\rm{irr}%
}^{\prime}(G)\leq2,$ a contradiction.\ }

{\normalsize Further, notice that the color $c$ is used by $\phi_{a,b,c}^{T}$
in precisely one shrub of $T$ and one can choose that one shrub to be any of
the shrubs of $T$ rooted at $u$. Let us now collect all this in the following
formal observation for further referencing. }

\begin{remark}
{\normalsize \label{Observation_maxDeg}Let $T$ be a colorable tree with
$\chi_{
\rm{irr}
}^{\prime}(T)=3.$ Then all vertices of maximum degree in $T$ come in
neighboring pairs. Also, every vertex of maximum degree in $T$ can be the
rainbow root of a $3$-liec of $T$. Finally, for any vertex $u$ of maximum
degree in $T$ and for any shrub $T_{i}$ of $T$ rooted at $u,$ there exists a
$3$-liec of $T$ such that the color $c$ is used only in $T_{i}$. }
\end{remark}

{\normalsize We will also need the following result from \cite{SedSkreMasna}%
\ on unicyclic graphs. }

\begin{theorem}
{\normalsize \label{Tm_unicyclic}Let $G$ be a colorable unicyclic graph. Then
$\chi_{%
\rm{irr}%
}^{\prime}(G)\leq3.$}
\end{theorem}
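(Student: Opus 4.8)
The statement to prove is Theorem~\ref{Tm_unicyclic}: every colorable unicyclic graph $G$ satisfies $\chi_{\rm irr}'(G)\le 3$.

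\medskip

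\textbf{Plan of the proof.}
The plan is to induct on the structure of the unique cycle $C$ of $G$, reducing the general case to the case where $G$ is totally trimmed (allowed by Observation~\ref{Obs_trimmed}) and where every vertex off the cycle lies in a tree hanging at a vertex of $C$. Write $C=v_1v_2\ldots v_g v_1$ and let $T_i$ be the tree component of $G-E(C)$ containing $v_i$, rooted at $v_i$. First I would dispose of the degenerate cases: if $g$ is even, then $C$ itself admits a locally irregular $2$-edge coloring obtained by alternating the two colors, and one can extend this to each pendant tree $T_i$ by a shrub-based argument (Theorem~\ref{Tm_BaudonSchrub}), taking care at $v_i$ that the colors incident along $C$ together with the tree do not create a monochromatic regular vertex; a third color is introduced in at most one tree if needed, exactly as in the tree case (Remark~\ref{Observation_maxDeg}). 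If $g$ is odd, $C$ alone is not colorable, so the coloring is forced to ``spend'' structure from the attached trees: here I would look for a vertex $v_i$ with $d_G(v_i)\ge 3$, split the cycle at $v_i$ into a path $P=v_i v_{i+1}\ldots v_{i-1} v_i$ (an ear-like path with both ends at $v_i$), color $P$ as a path so that $v_i$ receives a controlled color pattern, and then absorb the remainder into a $3$-liec using the shrub machinery. If every cycle vertex has degree $2$, then $G=C_g$ is an odd cycle, which is non-colorable, so this sub-case does not arise among colorable $G$.

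\medskip

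The key steps, in order, are: (1) reduce to totally trimmed $G$; (2) handle $g$ even by a direct alternating $2$-coloring of $C$ extended greedily to pendant shrubs, upgrading to $3$ colors in one shrub when an inversion-resistance obstruction occurs at a cycle vertex of degree $3$ or $4$; (3) for $g$ odd, pick a branch vertex $v$ on $C$, open the cycle into a path rooted at $v$, and apply the aliec/shrub-based colorings of Theorems~\ref{Tm_BaudonSchrub}–\ref{Tm_BaudonTree} to the resulting tree-like structure, choosing the coloring of the opened path so that the two edges of $C$ at $v$ close up into a locally irregular pattern (different $a$-degrees at the two ends of each cycle edge); (4) verify in each case that no edge is monochromatic-regular, paying special attention to the vertices $v_1,\dots,v_g$ where cycle edges meet tree edges, and to the few exceptional degree sequences $3,2,2$ and $4,3,3,2$ flagged in Remark~\ref{Observation_sequences}.

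\medskip

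\textbf{Main obstacle.}
The hard part will be step (3)–(4) in the odd-cycle case: after opening an odd cycle at a branch vertex $v$, the parity of the resulting path is odd, and an odd path is itself non-colorable, so the coloring must be ``saved'' by the tree hanging at $v$ or at a neighbor of $v$ — one must show such a tree always supplies enough freedom (a bichromatic or $3$-chromatic vertex close enough to $v$) to absorb the defect, and that this does not force a fourth color. The delicate bookkeeping is precisely at the cycle vertices of small degree: when $d_G(v_i)=3$ with the forbidden $3,2,2$-type pattern, or $d_G(v_i)=4$ with the $4,3,3,2$ pattern, one cannot freely invert, and one must instead route the third color $c$ through $T_i$ (using the last clause of Remark~\ref{Observation_maxDeg}) or shift the opening vertex of the cycle. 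Getting a single uniform argument that covers all placements of these bad patterns around the cycle — rather than a proliferation of sub-cases — is the crux, and I expect the actual proof in the appendix to proceed by a careful case analysis on $g\bmod 2$ and on the degrees and local colorings at the vertices $v_1,\dots,v_g$.
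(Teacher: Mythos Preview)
The paper does not prove Theorem~\ref{Tm_unicyclic}. It is quoted verbatim from \cite{SedSkreMasna} (see the sentence preceding the theorem: ``We will also need the following result from \cite{SedSkreMasna} on unicyclic graphs''), and is used only as a black box, namely as the base case for the induction in the proof of Theorem~\ref{Tm_main}. There is no proof of it anywhere in the body or in the appendix, so there is nothing to compare your proposal against.

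As for the proposal on its own merits: it is a plan, not a proof, and already step~(2) is inaccurate. ``Alternating the two colors'' on an even cycle does not give a locally irregular coloring: if you alternate single edges $abab\ldots$, each color class is a perfect matching and every edge is locally regular; if you mean the pair pattern $aabbaabb\ldots$, this closes up only when $g\equiv 0\pmod 4$, while for $g\equiv 2\pmod 4$ (e.g.\ $C_6$) no $2$-liec of the bare cycle exists and a third color is already needed on $C$. Step~(3) is more problematic: after opening an odd cycle at a branch vertex you get a tree, but you have not said which tree, nor why that tree is colorable, nor how to choose the coloring so that the two ends of the opened edge do not receive equal color-degrees when you close the cycle back up. The non-colorable class $\mathfrak{T}$ contains unicyclic graphs (a triangle with a pendant even path, for instance), and your argument must explain why the colorability hypothesis on $G$ rules out every obstruction you encounter; as written it does not. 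The actual proof in \cite{SedSkreMasna} carries out exactly this kind of case analysis, but what you have here is only a description of where the difficulties lie, not their resolution.
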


Now, let us introduce few notions regarding cacti. A \emph{grape} $G$ is any
cactus graph with at least one cycle in which all cycles share a vertex $u,$
and the vertex $u$ is called the \emph{root} of $G.$ A \emph{berry} $G_{i}$ of
a grape $G$ is any subgraph of $G$ induced by $V(G_{i}^{\prime})\cup\{u\},$
where $u$ is the root of $G$ and $G_{i}^{\prime}$ a connected component of
$G-u.$ Notice that a berry $G_{i}$ can be either a unicyclic graph in which
$u$ is of degree $2$ or a tree in which $u$ is a leaf, so such berries will be
called \emph{unicyclic berries} and \emph{acyclic berries}, respectively. For
convenience, in a unicyclic berry $G$ we will say a vertex $v$ is
\emph{acyclic} if it does not belong to the cycle of $G,$ otherwise we call it
\emph{cyclic}.

\begin{figure}[h]
\begin{center}
\includegraphics[scale=0.85]{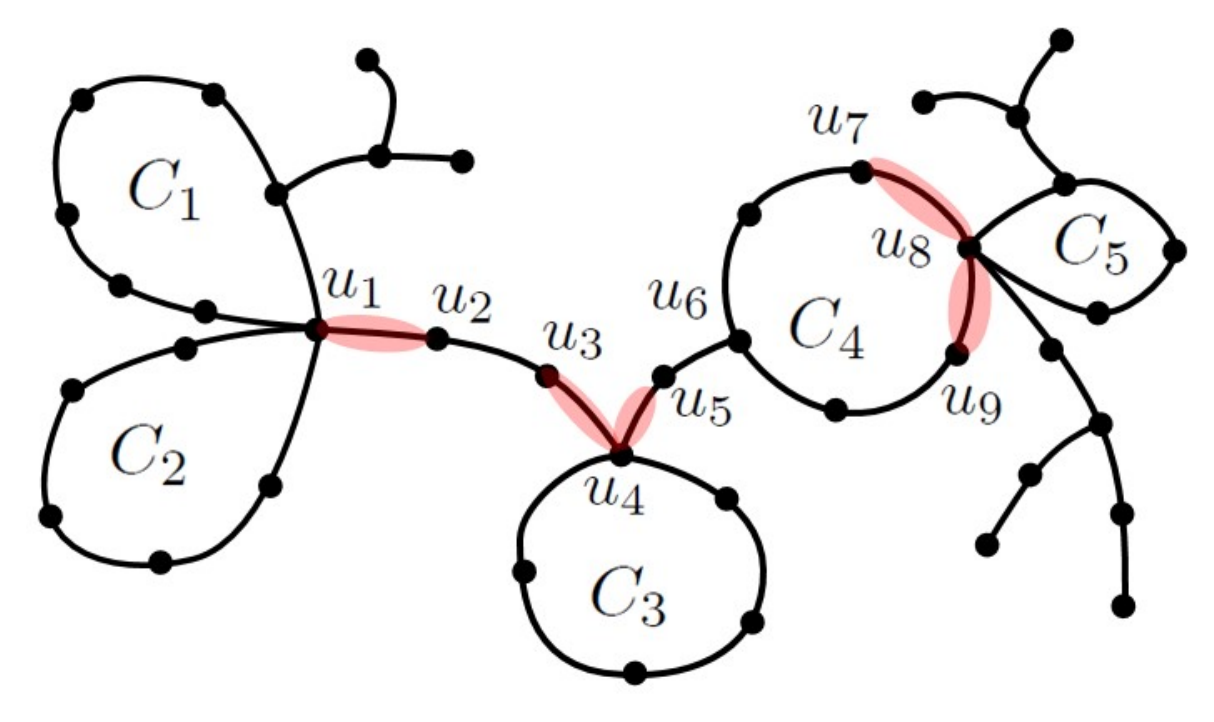}
\end{center}
\caption{A cactus graph $G$ with five cycles which contains two end-grapes,
$G_{u_{1}}$ and $G_{u_{8}}.$ The end-grape $G_{u_{1}}$ has two unicyclic
berries and one exit edge $u_{1}u_{2}.$ The end-grape $G_{u_{8}}$ consists of
one unicyclic and one acyclic berry and it has two exit edges $u_{8}u_{7}$ and
$u_{8}u_{9}.$ Notice that the cycle $C_{3}$ is not an end-grape of $G$ since
it has two exit edges $u_{4}u_{3}$ and $u_{4}u_{5}$ and they do not belong to
a same cycle.}%
\label{Fig_endCycles}%
\end{figure}

Let $G$ be a cactus graph with $\mathfrak{c}\geq2$ cycles which is not a
grape. An \emph{end-grape} $G_{u}$ of $G$ is a subgraph of $G$ such that:

\begin{itemize}
\item $G_{u}$ is a grape rooted at $u$ and $u$ is the only vertex of $G_{u}$
incident to edges from $G-E(G_{u}),$ and

\item $u$ is incident to either one such edge or two such edges which then
must belong to a same cycle of $G-E(G_{u}),$ and such edges are called the
\emph{exit edges} of $G_{u}$.
\end{itemize}

\noindent This notion is illustrated by Figure \ref{Fig_endCycles}. If $G_{u}$
is an end-grape of a cactus graph $G,$ then we denote $G_{0}=G-E(G_{u})$ and
$G_{0}$ will be called the \emph{root component} of $G_{u}$.

\section{Berry colorings}

In a cactus graph, we will color the berries of an end-grape by three kinds of
colorings: primary, secondary, and tertiary. So, let us define such colorings
and let us establish which berries admit such colorings.

\paragraph{Primary coloring.}

Let us now introduce two kinds of a primary coloring which will mainly be used
when coloring berries of an end-grape.

\begin{definition}
Let $G$ be a graph rooted at a vertex $u$ with $d_{G}(u)\in\{1,2\}.$ Let $v$
be a neighbor of $u$ and $w$ the other neighbor of $u$ if $d_{G}(u)=2.$ A
\emph{standard primary coloring} of $G$ is any $3$-edge coloring $\phi
_{a,b,c}$ of $G$ with the following properties:

\begin{itemize}
\item every edge non-incident to $u$ is locally irregular by $\phi_{a,b,c}$;

\item $u$ is monochromatic by $\phi_{a,b,c},$ say $\phi_{a,b,c}(u)=\{c\}$;

\item $d^{c}(v)\leq2$ and $d^{c}(w)\leq2$.
\end{itemize}
\end{definition}

It would be convenient to use a standard primary coloring for all berries, but
some berries do not admit such a coloring. Thus, we introduce an alternative
primary coloring which we will use in case of such berries.

\begin{definition}
Let $G$ be a graph rooted at a vertex $u$ with $d_{G}(u)\in\{1,2\}.$ Let $v$
be a neighbor of $u$ and $w$ the other neighbor of $u$ if $d_{G}(u)=2.$

\begin{itemize}
\item If $d_{G}(u)=1,$ an \emph{alternative primary coloring} of $G$ is any
$3$-liec $\phi_{a,b,c}$ of $G$ such that $v$ is monochromatic by $\phi
_{a,b,c},$ say $\phi_{a,b,c}(v)=\{c\}.$

\item If $d_{G}(u)=2,$ an \emph{alternative primary coloring} of $G$ is any
$3$-edge coloring $\phi_{a,b,c}$ of $G$ with the following properties:

\begin{itemize}
\item every edge $e\not =uv$ is locally irregular by $\phi_{a,b,c}$;

\item $u$ is bichromatic by $\phi_{a,b,c},$ say $\phi_{a,b,c}(uv)=c$ and
$\phi_{a,b,c}(uw)=a$;

\item $d^{c}(v)\leq2$ and $d^{a}(w)\in\{2,4\}$;

\item if $d^{c}(v)=2$ then $d^{a}(w)=4$.
\end{itemize}
\end{itemize}
\end{definition}

Let us now define seven types of berries for which it is not convenient for us
to use a standard primary coloring.

\begin{figure}[h]
\begin{center}%
\begin{tabular}
[c]{c}%
\begin{tabular}
[c]{llllll}%
$B_{1}$ & \includegraphics[scale=0.6]{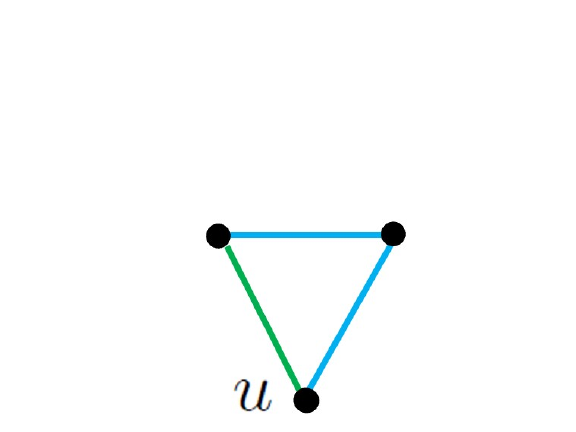} & $B_{2}$ &
\includegraphics[scale=0.6]{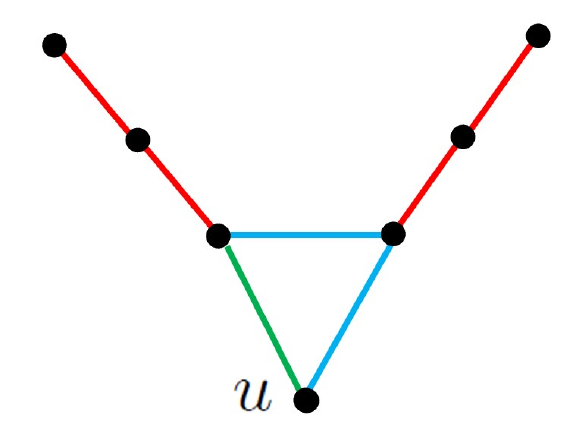} & $B_{3}$ &
\includegraphics[scale=0.6]{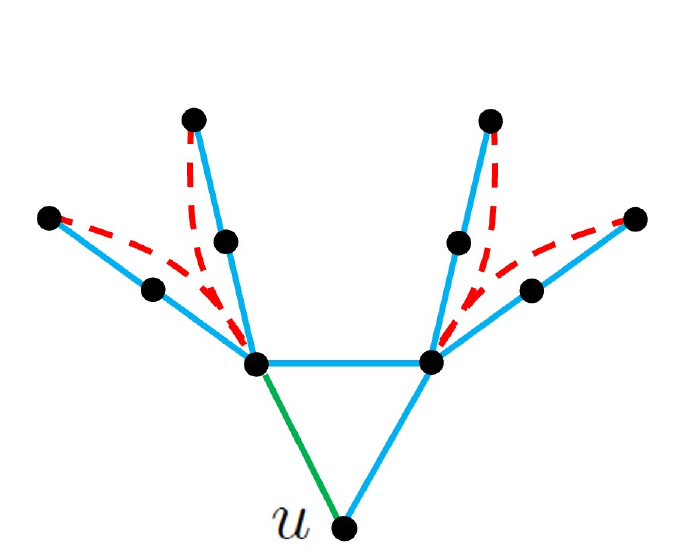}
\end{tabular}
\\%
\begin{tabular}
[c]{llll}%
$B_{4}$ & \includegraphics[scale=0.6]{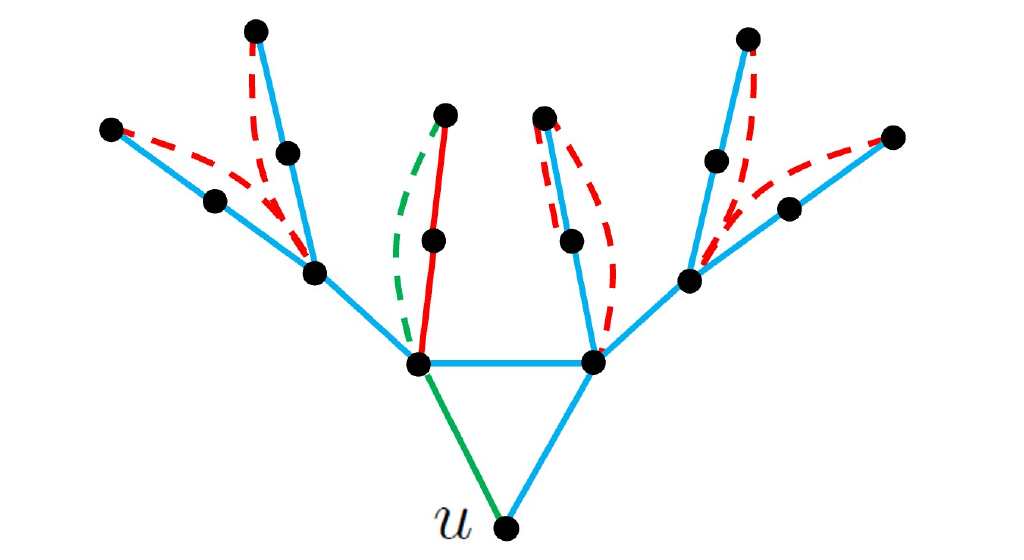} & $B_{5}$ &
\includegraphics[scale=0.6]{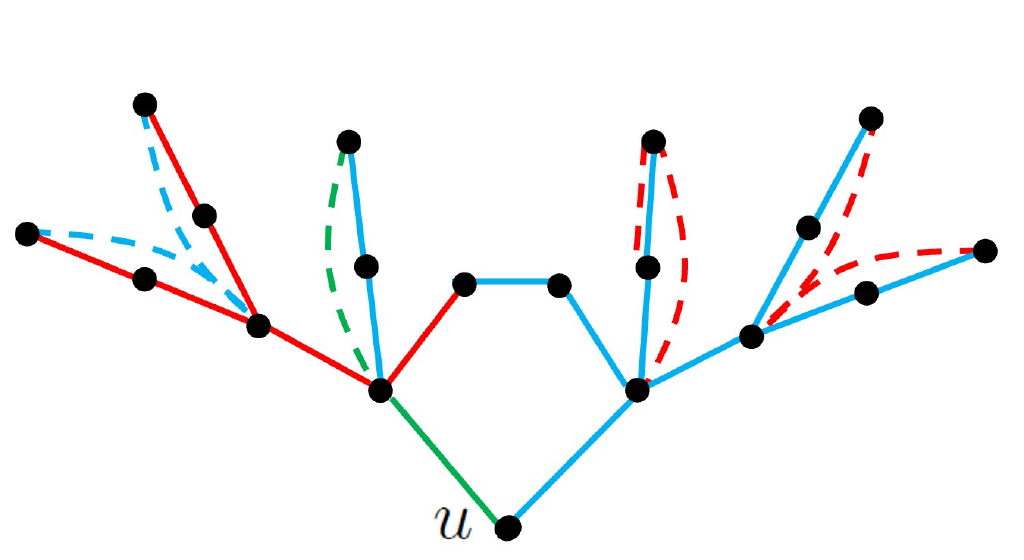}
\end{tabular}
\\%
\begin{tabular}
[c]{llll}%
$B_{6}$ & \includegraphics[scale=0.6]{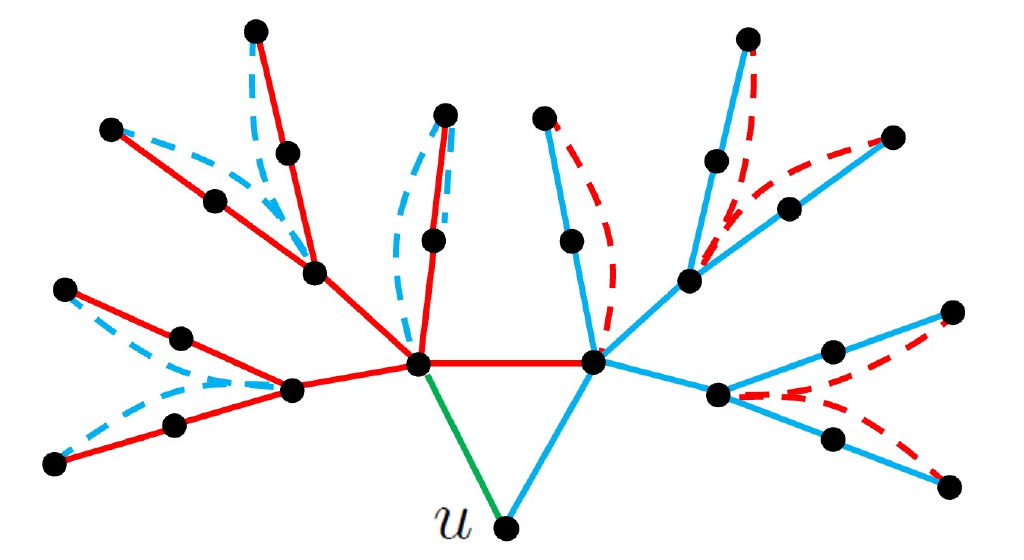} & $B_{7}$ &
\includegraphics[scale=0.6]{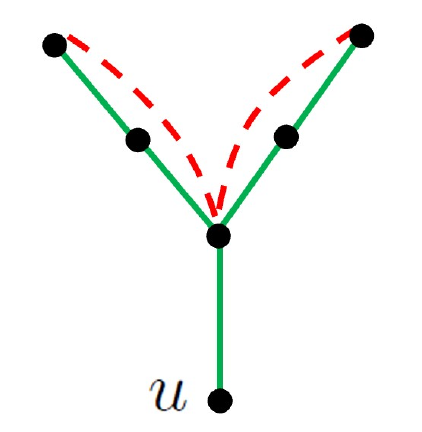}
\end{tabular}
\end{tabular}
\end{center}
\caption{Alternative berries $B_{1},\ldots,B_{7}$ rooted at $u.$ Dashed edges
are not included in the berry itself, but only in its closure (which will be
defined later).}%
\label{Fig_alternativeBerries}%
\end{figure}

\begin{definition}
The berries $B_{1},\ldots,B_{7}$ shown in Figure \ref{Fig_alternativeBerries}
(ignoring the dashed lines) will be called \emph{alternative berries}. All
other berries will be called \emph{standard berries}.
\end{definition}

A unicyclic berry consisting of the triangle $uvw$ and an even length path
hanging only at $w$ will also be considered to be $B_{2}$. The following
proposition connects berries and the primary coloring.

\begin{proposition}
\label{Prop_primaryColoring}Every standard berry admits a standard primary
coloring, every alternative berry admits an alternative primary coloring.
\end{proposition}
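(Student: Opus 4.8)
The plan is to prove Proposition~\ref{Prop_primaryColoring} by a case analysis on the structure of the berry, treating first the standard berries and then each of the seven alternative berries $B_1,\dots,B_7$ separately.

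\medskip

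\noindent\textbf{Standard berries.}
First I would dispose of the acyclic standard berries. An acyclic berry rooted at $u$ is a tree in which $u$ is a leaf; by Theorem~\ref{Tm_BaudonSchrub} it admits a $2$-aliec $\phi_{a,b}$ with the root edge $uv$ colored $a$. If this aliec is in fact a $2$-liec, then renaming $a\mapsto c$ gives a coloring in which every edge is locally irregular, $u$ is monochromatic in $c$, and $d^c(v)\le d(v)$; one checks $d^c(v)\le 2$ unless $v$ has high degree, in which case Theorem~\ref{Tm_BaudonTree} (the $\Delta\ge 5$ part, applied to the shrub) or a direct shrub-based argument gives a $2$-liec with $v$ monochromatic, again renamed to a standard primary coloring. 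If the aliec is proper, the root edge $uv$ is an isolated edge of its color $a$, so $d^a(v)=1$; recolor $a\mapsto c$ and the resulting coloring is a standard primary coloring with $d^c(v)=1\le 2$ and (when $d_G(u)=2$) the second root edge handled symmetrically. For unicyclic standard berries (where $u$ lies on no cycle, $d_G(u)=1$, or $d_G(u)=2$ with $u$ a cyclic vertex), I would invoke Theorem~\ref{Tm_unicyclic} to get a $3$-liec of the berry and then argue that, after possibly performing local swaps around $u$ and using that the berry is totally trimmed, one can arrange $u$ to be monochromatic with the neighbour $c$-degrees bounded by $2$; the exclusion of the seven alternative berries is exactly what rules out the configurations where this rearrangement fails.

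\medskip

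\noindent\textbf{Alternative berries.}
For each of $B_1,\dots,B_7$ I would simply exhibit an explicit alternative primary coloring and verify the defining conditions. For the berries with $d_G(u)=1$ this means producing a $3$-liec in which the unique neighbour $v$ of $u$ is monochromatic; for those with $d_G(u)=2$ it means a $3$-edge colouring in which only the edge $uv$ may be locally regular, $\phi(uv)=c$, $\phi(uw)=a$, $d^c(v)\le 2$, $d^a(w)\in\{2,4\}$, and $d^c(v)=2$ forces $d^a(w)=4$. Each of these is a small fixed graph (a triangle, possibly with a short even path attached, or two triangles joined by a short odd path), so the verification is a finite check best presented by reference to Figure~\ref{Fig_alternativeBerries} together with the stated colour assignments; I would list, berry by berry, the colour of each edge and confirm that every color class induces a locally irregular subgraph and that the degree conditions at $v$ and $w$ hold.

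\medskip

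\noindent\textbf{Main obstacle.}
The technical heart is the unicyclic standard berry case: one must show that Theorem~\ref{Tm_unicyclic}'s $3$-liec can always be massaged into the rigid standard-primary form (monochromatic root, $c$-degree at most $2$ at each root-neighbour), and pin down precisely that the only obstructions are $B_1,\dots,B_7$. This requires carefully tracking how the colour at the root interacts with a nearby triangle or pendant even path, using the trimming hypothesis to keep paths short, and checking that whenever a swap at $u$ would create a locally regular non-root edge the berry must be one of the listed seven. I expect this enumeration of bad configurations, rather than the individual colour verifications, to be where the real work lies.
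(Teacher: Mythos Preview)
Your handling of the alternative berries is essentially what the paper does: exhibit explicit colourings as in Figure~\ref{Fig_alternativeBerries} and verify the conditions. That part is fine.

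For standard berries, however, your plan has a genuine gap, and it stems from working with the wrong object. You propose to start from a $3$-liec of the berry $G$ itself (via Theorem~\ref{Tm_unicyclic} in the unicyclic case, or a $2$-aliec of the shrub $G$ in the acyclic case) and then ``massage'' it by local swaps near $u$. But an arbitrary $3$-liec of $G$ gives you no control over whether $u$ is monochromatic, and swapping colours in one region to fix $u$ will typically break local irregularity at the boundary of that region. Likewise, in your acyclic argument, renaming $a\mapsto c$ in a $2$-aliec of the shrub $G$ makes $d^c(v)$ equal to the old $a$-degree of $v$, which can easily be $3$ or $4$; your remark that ``one checks $d^c(v)\le 2$ unless $v$ has high degree'' is simply not true in general, and the $\Delta\ge 5$ clause of Theorem~\ref{Tm_BaudonTree} does not rescue this.

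The paper's approach sidesteps all of this by removing $u$ first: set $T=G-u$, which is a tree, and colour $T$ using the tree machinery. One then \emph{assigns} colour $c$ to the edge(s) $uv$ (and $uw$), so $u$ is automatically monochromatic; the only remaining task is to arrange that colour $c$ is absent at $v$ and $w$ in the colouring of $T$. This is exactly what Observation~\ref{Observation_maxDeg} provides: when $\chi'_{\rm irr}(T)=3$, one can pick the rainbow root $z$ away from $\{v,w\}$ (and away from any $A_1^{\rm op}$ docking vertex) and confine colour $c$ to a single shrub of $T$ not containing $v$ or $w$. The excluded berries $B_1,\dots,B_7$ are precisely those for which no such choice of $z$ exists. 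The cases $\chi'_{\rm irr}(T)\le 2$ and $T$ non-colorable are handled separately and straightforwardly. Your ``enumeration of bad configurations'' intuition is right, but the enumeration becomes tractable only once you work with $T=G-u$ and the rainbow-root structure, not with a $3$-liec of $G$.
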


\begin{proof}
An alternative primary coloring of alternative berries is shown in Figure
\ref{Fig_alternativeBerries} (without the dashed lines), and the rest of the
proof is in the appendix.
\end{proof}

In the sequel we will say shortly 'primary coloring', assuming standard
primary coloring for standard berries and alternative primary coloring for
alternative berries.

Notice that the above proposition does not claim that alternative berries do
not admit a standard primary coloring. Namely, it is easily verified that
berries $B_{1},$ $B_{2}$ and $B_{3}$ indeed do not admit a standard primary
coloring, but berries $B_{4},$ $B_{5},$ $B_{6}$ and $B_{7}$ do. The reason why
these four berries are included among alternative berries is that their
closure (which will be introduced later) does not admit a standard primary
coloring. To conclude, these berries for which we established the existence of
a primary coloring will be called the \emph{primary colored} berries.

\paragraph{Secondary coloring.}

Secondary coloring will be needed for some standard berries. Let us formally
define such a coloring.

\begin{definition}
Let $G\not =B_{7}$ be a graph rooted at a vertex $u$ with $d_{G}%
(u)\in\{1,2\}.$ Let $v$ be a neighbor of $u$ and $w$ the other neighbor of $u$
if $d_{G}(u)=2.$

\begin{itemize}
\item If $d_{G}(u)=1,$ a\emph{ secondary coloring} of $G$ is any $3$-edge
coloring $\phi_{a,b,c}$ of $G$ such that all edges not incident to $u$ are
locally irregular and $d^{c}(v)\not =2$ where $c$ is the color of $uv$ by
$\phi_{a,b,c}.$

\item If $d_{G}(u)=2,$ a \emph{secondary coloring} of $G$ is any $3$-edge
coloring $\phi_{a,b,c}$ of $G$ with the following properties:

\begin{itemize}
\item every edge $e\not =uv$ is locally irregular by $\phi_{a,b,c}$;

\item $u$ is bichromatic by $\phi_{a,b,c},$ say $\phi_{a,b,c}(uv)=c$ and
$\phi_{a,b,c}(uw)=a$;

\item $d^{c}(v)\leq2$;

\item if $d^{c}(v)=2$ then $d^{a}(w)\geq3.$
\end{itemize}
\end{itemize}
\end{definition}

We exclude $B_{7}$ from the above definition, since we always want an
alternative primary coloring of it. In the following two propositions we
establish for some berries that they admit a secondary coloring.

\begin{proposition}
\label{Prop_secondaryColoring1}Let $G$ be a standard berry rooted at $u$ with
$d_{G}(u)=1.$ If $G$ is not an even length path, then $G$ admits a secondary coloring.
\end{proposition}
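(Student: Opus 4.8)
The statement to prove is: every standard berry $G$ rooted at $u$ with $d_G(u)=1$, which is not an even length path, admits a secondary coloring. Recall that since $d_G(u)=1$, a secondary coloring is a $3$-edge coloring in which every edge not incident to $u$ is locally irregular and $d^c(v)\neq 2$, where $c=\phi_{a,b,c}(uv)$ and $v$ is the neighbor of $u$. The plan is to reduce this to the already-available primary-coloring result (Proposition~\ref{Prop_primaryColoring}) together with the tree results of Baudon et al.\ (Theorems~\ref{Tm_BaudonSchrub}, \ref{Tm_BaudonTree}) and the unicyclic result (Theorem~\ref{Tm_unicyclic}), splitting into the acyclic case ($G$ a tree with $u$ a leaf, i.e.\ $G$ is a shrub) and the unicyclic case ($u$ acyclic of degree $1$, so $G$ is a shrub whose ``core'' contains one cycle).

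\textbf{Step 1: The acyclic case.} Here $G$ is a shrub rooted at the leaf $u$. By Theorem~\ref{Tm_BaudonSchrub} it admits a $2$-aliec $\phi_{a,b}$ with root edge $uv$ colored $a$; if this is already a $2$-liec we are essentially done after possibly relabelling, since then every edge (in particular those not incident to $u$) is locally irregular, and we only need $d^a(v)\neq 2$. The issue is that a proper $2$-aliec leaves the root edge $uv$ locally regular, and also that even a genuine $2$-liec might give $d^a(v)=2$. So the real work is: starting from a $3$-liec of $G$ (which exists by Theorem~\ref{Tm_BaudonTree}, as a shrub that is not a single edge-less graph is a colorable tree) or from a standard primary coloring, recolor the edges incident to $v$ so that exactly one or three (not two) of them carry the color $c$ of $uv$. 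Since $G$ is not an even length path, $v$ either has degree $\ge 3$ in $G$ or $v$ has degree $2$ but then the path hanging from it has odd length, and in either case there is enough freedom: one takes a primary coloring (which makes $u$ monochromatic, say in $c$, with $d^c(v)\le 2$), and if $d^c(v)=2$ one swaps a color at one of the shrubs/branches at $v$ to change the $c$-degree of $v$ by one. The fact that $G$ is \emph{not} an even length path is exactly what guarantees such a swappable branch exists — an even path is the unique obstruction because there every recoloring forced by local irregularity pins down $d^c(v)$.

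\textbf{Step 2: The unicyclic case.} Now $G$ is a unicyclic berry with $d_G(u)=1$, $u$ acyclic. By Theorem~\ref{Tm_unicyclic}, $G$ admits a $3$-liec $\phi_{a,b,c}$; moreover, since $u$ is a leaf attached by a path to the rest, we may hope to arrange that the color at $uv$ and the $c$-degree of $v$ behave as required. Concretely, I would look at the neighbor $v$: if $v$ has degree $\ge 3$, the same local swap as in Step 1 fixes the parity of $d^c(v)$; if $v$ has degree $2$, then the pending path from $u$ through $v$ has some length, and since $G$ is not an even path (it has a cycle), this path is attached to a vertex of the cyclic part or branching part of $G$, and a $3$-liec can be chosen, using Remark~\ref{Observation_maxDeg}-type freedom inside the acyclic attachments or a suitable coloring of the cycle, so that $d^c(v)\in\{1,3\}$. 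If $u$'s neighbor $v$ lies close to the cycle, one handles the small finitely many configurations directly: color the triangle or larger cycle with two colors making all its edges locally irregular, then extend toward $u$ controlling the color of $uv$.

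\textbf{Main obstacle.} The hard part will be the low-degree, ``thin'' configurations: when $v$ has degree $2$ (so $uv$ sits on a pending path) and the branch into which $u$-$v$ feeds has little slack — e.g.\ $G$ being a short odd path attached to a triangle, or a triangle with a pendant edge, etc. In those cases the earlier primary/secondary machinery either fails or produces $d^c(v)=2$, and one must argue by a short case analysis (essentially the same small berries that were singled out as alternative berries $B_1,\dots,B_7$) that, provided $G$ is not literally an even length path, an explicit secondary coloring still exists. I expect the bulk of the detailed proof to be exactly this enumeration of the boundary cases, while the generic case is dispatched uniformly by the swap argument above; these details are deferred to the appendix, as the paper does throughout.
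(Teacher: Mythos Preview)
Your proposal contains a structural misreading and misses the paper's key trick.

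First, there is no unicyclic case. By the paper's definition of a berry, a berry $G_i$ is ``either a unicyclic graph in which $u$ is of degree $2$ or a tree in which $u$ is a leaf.'' Hence the hypothesis $d_G(u)=1$ forces $G$ to be an \emph{acyclic} berry, i.e.\ a shrub. Your entire Step~2 is addressing a situation that cannot occur, and the ``main obstacle'' you anticipate (short odd paths attached to triangles, etc.) is vacuous here.

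Second, even restricted to the acyclic case, your approach is far more laborious than necessary, and the swap argument you sketch is not clearly complete (you do not explain why, when $d^c(v)=2$ after a primary coloring, one can always recolor a branch at $v$ without destroying local irregularity deeper in that branch). The paper's actual argument is a two-line trick you overlooked: set $G' = G + ux$ for a new vertex $x$. Since $G$ is not an even-length path, $G'$ is not an odd-length path, so $G'$ is a colorable tree and admits a $3$-liec $\phi'$ by Theorem~\ref{Tm_BaudonTree}. In $G'$ the vertex $x$ is a leaf and $d_{G'}(u)=2$, so $ux$ and $uv$ must share a color, say $c$; thus $d_{G'}^c(u)=2$, and local irregularity of $uv$ in $\phi'$ gives $d_{G'}^c(v)\neq 2$. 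Restricting $\phi'$ to $G$ is then exactly a secondary coloring. No case analysis, no swaps, no enumeration of small configurations is needed.
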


\begin{proposition}
\label{Prop_secondaryColoring2}Let $G$ be a standard berry rooted at $u$ with
$d_{G}(u)=2.$ If $G$ does not admit a primary coloring which is a liec, then
$G$ admits a secondary coloring.
\end{proposition}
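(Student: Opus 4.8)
The plan is to prove Proposition~\ref{Prop_secondaryColoring2} by a case analysis on the structure of a standard berry $G$ rooted at $u$ with $d_G(u)=2$, using the hypothesis that no primary coloring of $G$ is a liec as a source of structural restrictions. Recall that $G$ is a unicyclic berry: it consists of a cycle through $u$ together with trees hanging off the cycle's vertices. Since $G$ is totally trimmed, the cycle has length $3$ or $4$, and all pending paths have length $\le 2$ while ears have length $\le 4$; this already bounds the combinatorial possibilities severely. The neighbors of $u$ on the cycle are $v$ and $w$. A secondary coloring must make every edge except $uv$ locally irregular, put $\phi(uv)=c$, $\phi(uw)=a$, keep $d^c(v)\le 2$, and — this is the real constraint — if $d^c(v)=2$ then force $d^a(w)\ge 3$.

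\textbf{Main line of argument.} First I would take the standard primary coloring $\phi_{a,b,c}$ of $G$ guaranteed by Proposition~\ref{Prop_primaryColoring} (a standard berry, so a standard primary coloring exists), in which $u$ is monochromatic, say $\phi_{a,b,c}(u)=\{c\}$, every edge not incident to $u$ is locally irregular, and $d^c(v),d^c(w)\le 2$. The only way this standard primary coloring fails to be a liec is that one of the two edges $uv$, $uw$ is locally regular, i.e.\ $d^c(u)=d^c(v)$ or $d^c(u)=d^c(w)$; since $d^c(u)=2$, this means $d^c(v)=2$ or $d^c(w)=2$. By the hypothesis of the proposition this genuinely happens (otherwise $\phi_{a,b,c}$ is already a liec). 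I would then \emph{recolor the edge $uw$ with color $a$}. After this single recolor: $u$ is bichromatic with $\phi(uv)=c$, $\phi(uw)=a$; every edge $e\ne uv$ on the $v$-side and off the cycle is untouched, hence still locally irregular; the edge $uv$ is allowed to be locally regular in a secondary coloring; and I must check the edges incident to $w$ other than $uw$. The key point is that changing $uw$ from $c$ to $a$ changes $d^c(w)$ and $d^a(w)$ each by one, and I must re-verify that every edge at $w$ distinct from $uw$ remains locally irregular, possibly after a further local adjustment of colors in the tree hanging at $w$ (using that shrubs admit $2$-aliecs, Theorem~\ref{Tm_BaudonSchrub}, and that there is freedom in which colors play which role there).

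\textbf{The hard part} will be controlling the interaction at $w$ in the degenerate configurations: when $w$ has small degree (say $w$ is only on the triangle/square with at most one pendant), the recolor of $uw$ can create a conflict on the cycle edge $vw$ or $w w'$, and one may no longer have $d^c(v)\le 2$ or the conditional clause ``$d^c(v)=2 \Rightarrow d^a(w)\ge 3$'' may be violated. I anticipate splitting into the subcases (i) $d^c(v)\le 1$ in the primary coloring, where the conditional clause is vacuous and a direct recolor works; and (ii) $d^c(v)=2$ in the primary coloring, where I must additionally \emph{add} color-$a$ edges at $w$ so that $d^a(w)\ge 3$, which is possible by choosing the primary coloring of the $w$-side so that $w$ has enough $a$-edges, or, if $w$ has too few incident edges for that (e.g.\ $G$ is essentially the bow-tie-like configuration), by re-routing via the second neighbor of $u$ — exactly here one must invoke that $G\ne B$ implicitly, or rather that these extremal berries have already been named among the alternative berries $B_1,\dots,B_7$ and are thus excluded from the ``standard berry'' hypothesis. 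The bulk of the write-up is the finite check that the handful of small standard berries with a cycle of length $3$ or $4$ and bounded pendant/ear lengths all survive this recoloring scheme, and this routine but lengthy verification is deferred to the appendix.
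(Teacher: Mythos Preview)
Your plan diverges from the paper's proof and carries real gaps. The paper never modifies the primary coloring. Instead it records, during the proof of Proposition~\ref{Prop_primaryColoring}, exactly when the constructed standard primary coloring fails to be a liec (Observation~\ref{Obs_standardNotLiec}): only when (i) $T=G-u$ is colorable with $\chi'_{\mathrm{irr}}(T)=3$, $\Delta(T)=3$, and the sole pair of degree-$3$ vertices in $T$ is $w$ (or $v$) and its acyclic neighbour, or (ii) $T$ is not colorable. For these berries one sets $T'=G-uv$ and argues that $T'$ admits a $2$-liec $\phi_{a,b}^{T'}$: in case~(i) because $w$ becomes the unique vertex of degree~$4$ in $T'$, so Observation~\ref{Observation_maxDeg} rules out $\chi'_{\mathrm{irr}}(T')=3$; in case~(ii) because $T'$ is an even path or a spider with one branch vertex $w$. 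Colouring $uv$ by $c$ and all of $T'$ by $\phi_{a,b}^{T'}$ yields a secondary coloring with $d^c(v)=1$, so the clause ``$d^c(v)=2\Rightarrow d^a(w)\ge3$'' is vacuous and there is nothing further to check.

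Your route --- start from the primary coloring and recolor $uw$ from $c$ to $a$, then patch locally --- runs into two concrete problems. First, your structural reduction is wrong: ``totally trimmed'' bounds each \emph{ear} to length $\le4$, not the cycle itself; the cycle through $u$ can be arbitrarily long once several cycle vertices carry hanging trees, so you are not left with ``a handful of small standard berries'' amenable to a finite check. Second, even on small berries the recolor gives $d^a(u)=1$, so $uw$ is locally irregular only if $d^a(w)\ne1$ afterwards; nothing in the primary coloring guarantees this, and your ``further local adjustment of colors in the tree hanging at $w$'' is never specified. Likewise in your subcase $d^c(v)=2$ you need $d^a(w)\ge3$, which is impossible when $d_G(w)\le2$ --- you gesture at ``re-routing via the second neighbor of $u$'' and at the exclusion of alternative berries, but no argument is given. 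The paper sidesteps all of this by building the secondary coloring from scratch out of a $2$-liec of $G-uv$, which forces $d^c(v)=1$ and eliminates the delicate conditional clause entirely.
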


Graphs for which a secondary coloring is constructed in Propositions
\ref{Prop_secondaryColoring1} and \ref{Prop_secondaryColoring2} will be called
the \emph{secondary colored} graphs.

\begin{figure}[h]
\begin{center}%
\begin{tabular}
[c]{llll}%
$B_{2}$ & \includegraphics[scale=0.6]{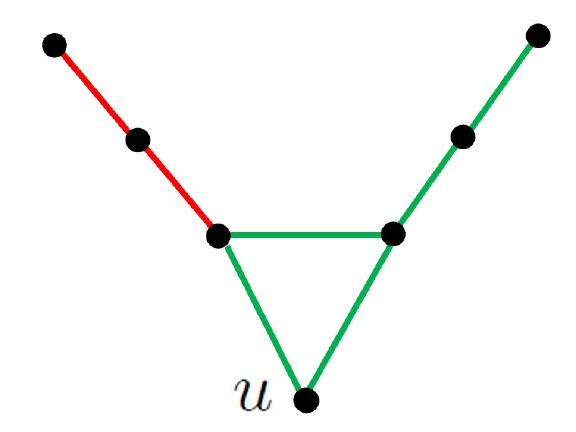} & $B_{3}$ &
\includegraphics[scale=0.6]{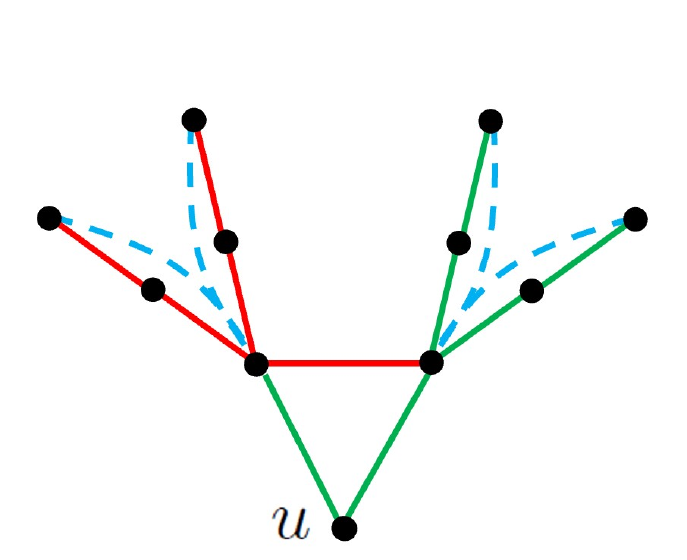}\\
$B_{4}$ & \includegraphics[scale=0.6]{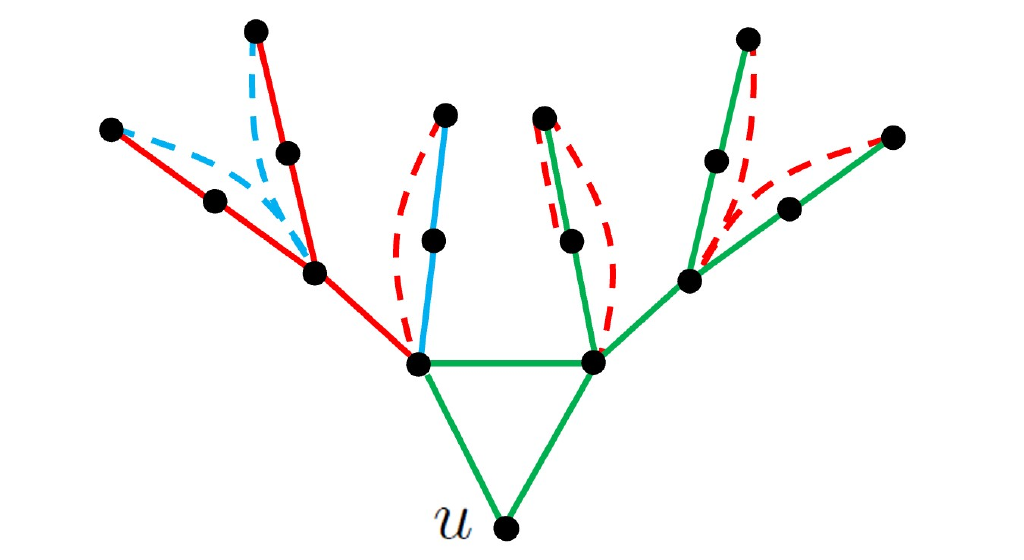} & $B_{5}$ &
\includegraphics[scale=0.6]{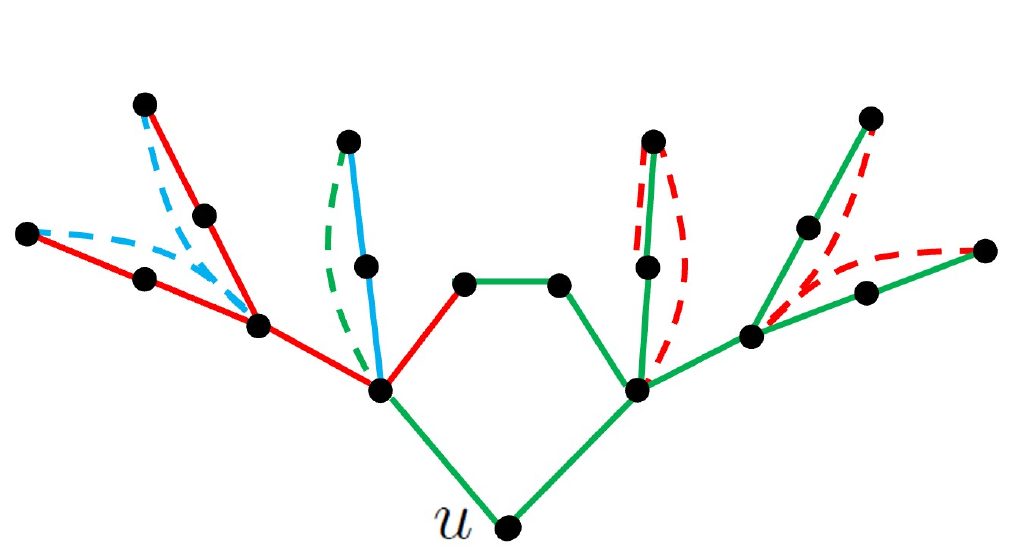}\\
$B_{6}$ & \multicolumn{3}{l}{\includegraphics[scale=0.6]{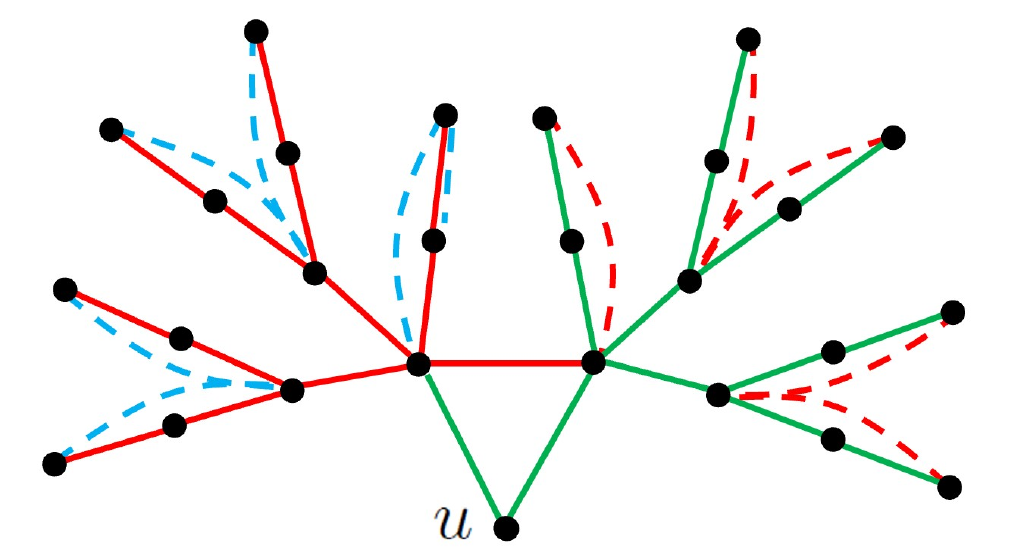}}%
\end{tabular}
\end{center}
\caption{A tertiary coloring of berries $B_{2},\ldots,B_{6}$ and its
closures.}%
\label{Fig_tertiaryCol}%
\end{figure}

\paragraph{Tertiary coloring.}

A third variant of a berry coloring will also be needed, a so called tertiary coloring.

\begin{definition}
Let $G$ be a graph rooted at a vertex $u$ with $d_{G}(u)=2.$ Let $v$ and $w$
be the neighbors of $u.$ A \emph{tertiary coloring} of $G$ is any $3$-edge
coloring $\phi_{a,b,c}$ of $G$ with the following properties:

\begin{itemize}
\item every edge $e\not =uv$ is locally irregular by $\phi_{a,b,c}$;

\item $u$ is monochromatic by $\phi_{a,b,c},$ say $\phi_{a,b,c}(u)=\{c\}$;

\item $d^{c}(v)\leq2$ and $d^{c}(w)\in\{3,4\}.$
\end{itemize}
\end{definition}

A tertiary coloring of $B_{2},\ldots,B_{6}$ is shown in Figure
\ref{Fig_tertiaryCol} (without dashed lines), so the following proposition
holds and such berries will be called \emph{tertiary colored} berries.

\begin{proposition}
\label{Prop_tertiaryColoring}Let $G$ be an alternative berry. If $G\not =%
B_{1},B_{7},$ then $G$ admits a tertiary coloring.
\end{proposition}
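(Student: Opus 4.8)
The plan is to verify the existence of a tertiary coloring directly for each of the five alternative berries $B_{2},B_{3},B_{4},B_{5},B_{6}$, exactly as the explicit colorings in Figure~\ref{Fig_tertiaryCol} suggest; the only thing that needs to be argued carefully is that these explicit colorings meet all three conditions of a tertiary coloring. Recall that a tertiary coloring is rooted at $u$ with $d_{G}(u)=2$, neighbors $v$ and $w$, and requires: (i) every edge $e\neq uv$ is locally irregular; (ii) $u$ is monochromatic, say $\phi_{a,b,c}(u)=\{c\}$, so both $uv$ and $uw$ get color $c$; and (iii) $d^{c}(v)\leq 2$ while $d^{c}(w)\in\{3,4\}$. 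So the structural requirement is that $w$ must be incident to at least three and at most four edges of the root color $c$, while $v$ may be incident to at most two such edges (and $v$ could even be only incident to $uv$ in color $c$).

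First I would record, for each $B_{i}$, which of its two degree-two-root neighbors we designate as $v$ and which as $w$: since the berries $B_{2},\dots,B_{6}$ each contain a triangle through $u$ together with attached even-length paths (and in some cases an extra triangle), we will pick $w$ to be the triangle-vertex adjacent to $u$ that also carries a pendant even path, so that we have room to load three $c$-colored edges at $w$. Then I would write down, berry by berry, the coloring from Figure~\ref{Fig_tertiaryCol}: color the two edges $uv$ and $uw$ with $c$; on the rest of the triangle and the hanging even paths, alternate colors so as to guarantee local irregularity away from $uv$ and to arrange $d^{c}(w)=3$ (using the third triangle edge at $w$ or the first path edge at $w$ as the third $c$-edge) and $d^{c}(v)\le 2$. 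The hanging even paths are handled by the standard alternating pattern (as in the trimming/ear arguments of Section~2), which keeps all of their edges locally irregular; the finitely many edges near $u$ and the triangle(s) are checked by hand. Because each $B_{i}$ is a fixed small graph, this is a finite case check with no asymptotics.

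The main obstacle—really the only subtlety—is the interaction at $w$: we must simultaneously force $d^{c}(w)\in\{3,4\}$, keep every edge incident to $w$ (other than $uv$, which is not incident to $w$) locally irregular, and keep the coloring globally valid (each color class locally irregular). Concretely, giving $w$ three incident $c$-edges changes $d^{c}(w)$ and hence affects the local-irregularity checks on the non-$c$ edges at $w$ and on the first edges of the path hanging at $w$; one has to choose the coloring of those first path-edges and of the triangle edges at $w$ so that no monochromatic edge has equal-degree endpoints in its color. This is exactly what the pictures in Figure~\ref{Fig_tertiaryCol} accomplish, and the appendix presumably just exhibits and checks them. I would also note why $B_{1}$ and $B_{7}$ are excluded: $B_{1}$ (the smallest berry, essentially a triangle with $u$ of degree two and nothing hanging at the third vertex) simply has too few edges at the vertex playing the role of $w$ to reach $d^{c}(w)\ge 3$ while keeping $d^{c}(v)\le 2$ and $u$ monochromatic; and $B_{7}$ is the one berry for which we have committed to always using an alternative primary coloring, so no tertiary coloring is claimed for it. Hence the statement reduces to displaying the five colorings and performing the routine local-irregularity verification, which is deferred to the appendix.
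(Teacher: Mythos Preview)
Your proposal is correct and matches the paper's approach exactly: the paper's entire proof is the sentence preceding the proposition, namely that explicit tertiary colorings of $B_{2},\ldots,B_{6}$ are exhibited in Figure~\ref{Fig_tertiaryCol}, so the result is a finite case check by picture. One small correction to your side remark: the real reason $B_{7}$ is excluded is not the commitment to an alternative primary coloring but simply that $B_{7}$ is the unique \emph{acyclic} alternative berry, so $d_{G}(u)=1$ there and the tertiary coloring (which requires $d_{G}(u)=2$) is not even defined for it.
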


\section{Grape and end-grape colorings}

Let $G$ be a cactus graph and $G_{u}$ an end-grape of $G$ with a root
component $G_{0}.$ The main idea of the proof is to establish that every
coloring of $G_{0}$ can be extended to $G.$ Thus the induction on the number
of cycles in a cactus graph, with Theorem \ref{Tm_unicyclic} as basis, would
yield a $3$-liec of all cacti. We already know that this will not go smoothly,
because of the bow-tie graph $B,$ but in the end it will work for all other cacti.

\begin{proposition}
\label{Prop_grapes1}Let $G$ be a colorable cactus graph with $\mathfrak{c}%
\geq2$ cycles which is a grape. Then $G$ admits a $3$-liec.
\end{proposition}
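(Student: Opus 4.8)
The plan is to argue by structural analysis of a grape $G$ rooted at $u$, combining the berry colorings developed in Section 3 (primary, secondary, tertiary) for the individual berries $G_1,\dots,G_k$ that hang at $u$. Since $G$ is a grape with $\mathfrak{c}\geq 2$ cycles, at least two of the berries are unicyclic, and any acyclic berry is a shrub rooted at $u$ for which a $2$-aliec exists by Theorem~\ref{Tm_BaudonSchrub}. The overall strategy: color each berry by its primary coloring, so that the root vertex $u$ becomes monochromatic (in a standard primary coloring) or nearly so, then sum these colorings via $\phi_{a,b,c}=\sum_i \phi^i_{a,b,c}$ and check that $u$ itself is not locally regular. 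The delicate point is exactly the edges incident to $u$: a standard primary coloring of a berry makes every edge \emph{not} incident to $u$ locally irregular and keeps $d^c(v)\leq 2$ on the neighbor $v$ of $u$ inside that berry, which is precisely the data we need to control $d^c(u)$ versus the $c$-degrees of its neighbors.

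First I would isolate the case analysis by the number and nature of the berries. If $G$ has an acyclic berry, I would use a $2$-aliec $\phi_{a,b}$ of that shrub together with primary colorings of the remaining berries, exploiting the freedom to invert colors $a\leftrightarrow b$ in individual berries so that $u$ has the right $c$-sequence; here the fact that $u$ has degree $\geq 2$ in a unicyclic berry but is a leaf in the shrub gives enough asymmetry to avoid local regularity at $u$. If every berry is unicyclic, then $d_G(u)=2\mathfrak{c}\geq 4$, which is large, and I would lean on the berry colorings where $u$ receives color $c$ on both incident edges of each berry: summing gives $d^c(u)=2\mathfrak{c}$, and since each neighbor $v$ of $u$ satisfies $d^c(v)\leq 2 < 2\mathfrak{c}$ (as $\mathfrak{c}\geq 2$), the vertex $u$ is automatically locally irregular in color $c$. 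The internal edges are already locally irregular by the definition of primary coloring, so the sum is a $3$-liec. When some berry does not admit a standard primary coloring, it is an alternative berry $B_1,\dots,B_7$, and I would substitute its alternative primary coloring (Proposition~\ref{Prop_primaryColoring}); these make $u$ bichromatic with prescribed $d^a(w)\in\{2,4\}$, so I would need to recheck that the combined degree at $u$ still separates from its neighbors — here the secondary and tertiary colorings of Propositions~\ref{Prop_secondaryColoring1}--\ref{Prop_tertiaryColoring} serve as fallbacks to adjust the $c$-degree at $u$ in one chosen berry.

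The main obstacle will be the small cases: a grape with exactly two triangular berries, where $d_G(u)=4$ and the neighbors of $u$ may have $c$-degree as large as $2$, leaving little room — this is essentially the neighborhood of the bow-tie $B$, except that in a grape the two triangles genuinely share the vertex $u$ rather than being joined by an edge, and the extra exit structure of $B$ is absent. I expect that for a pure grape (no exit edges, since Proposition~\ref{Prop_grapes1} treats $G$ itself as the whole graph) one always has enough slack: when both berries are triangles one can color one triangle so that $u$ sees $c$-degree $2$ but a neighbor sees $c$-degree $1$, and the other triangle so that $u$ still sees $c$ while the degrees at its neighbors stay at $1$, forcing $d^c(u)=2\neq 1$. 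I would finish by verifying, in each remaining configuration (one triangle plus one longer unicyclic berry, several berries one of which is alternative, etc.), that an appropriate choice among primary/secondary/tertiary colorings plus inversions yields a valid $3$-liec; the bulk of this is routine degree bookkeeping at $u$, deferred to the appendix as the paper indicates.
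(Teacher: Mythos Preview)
Your strategy---color each berry by its primary coloring, sum, and then repair local regularity at the root $u$ via inversions and the secondary/tertiary fallbacks---is exactly the engine the paper uses, but the paper packages it differently. In the paper, the proof of Proposition~\ref{Prop_grapes1} is a one-line deduction: it follows from the \emph{stronger} Proposition~\ref{Prop_grapes2} (every grape $G\neq B'$ admits a $3$-liec in which each berry is colored by a primary, secondary, or tertiary coloring) together with the explicit ad-hoc $3$-liec of the single exceptional grape $B'$ displayed in Figure~\ref{Fig_Bprime}. The case analysis you sketch (split by $d^{c}_{\mathrm{prim}}(u)$, handle alternative berries by swapping colors, invoke tertiary colorings when a $B_7$ is present, etc.) is precisely the content of the proof of Proposition~\ref{Prop_grapes2}, not of Proposition~\ref{Prop_grapes1} itself. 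So you are effectively re-deriving \ref{Prop_grapes2} inline.

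One concrete point your outline misses: the grape $B'$ (two copies of $B_1$ together with a $B_7$ at the same root) does \emph{not} admit a $3$-liec in which every berry carries a primary/secondary/tertiary coloring, so the ``routine degree bookkeeping'' you defer will in fact fail for that one configuration; you would have to detect $B'$ and supply a bespoke $3$-liec for it, as the paper does via Figure~\ref{Fig_Bprime}. Apart from this isolated exception your plan is sound, and once $B'$ is handled separately the remaining bookkeeping is exactly what the paper carries out (using the modified primary coloring $\phi^{u}_{a,b,c}$ of the whole grape to tame the $a$-edges contributed by alternative berries).
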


Next, we will focus our attention on cactus graphs that are not grapes, so
they must contain an end-grape $G_{u}$ with a root component $G_{0}.$ Recall
that $d_{G_{0}}(u)\in\{1,2\},$ so the neighbors of $u$ in $G_{0}$ will be
denoted by $u_{1}$ and $u_{2},$ where $u_{2}$ exists only in the case
$d_{G_{0}}(u)=2.$ We will also need a modification $G_{0}^{\prime}$ of a root
component $G_{0}$ defined by $G_{0}^{\prime}=G_{0}+uv_{1}$ where $v_{1}$
denotes a neighbor of $u$ which belongs to a unicyclic berry of $G_{u}.$ Now,
let us define the following six particular end-grapes.

\begin{definition}
The end-grapes $A_{1},\ldots,A_{6}$ shown in Figure \ref{Fig_singularGrapes}
are called \emph{singular} end-grapes. All other end-grapes are called
\emph{regular} end-grapes.
\end{definition}

\begin{figure}[h]
\begin{center}%
\begin{tabular}
[c]{llll}%
$A_{1}$ & \includegraphics[scale=0.65]{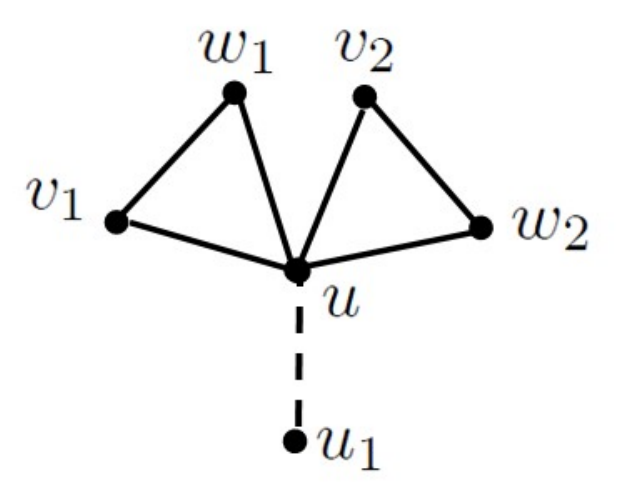} & $A_{2}$ &
\includegraphics[scale=0.65]{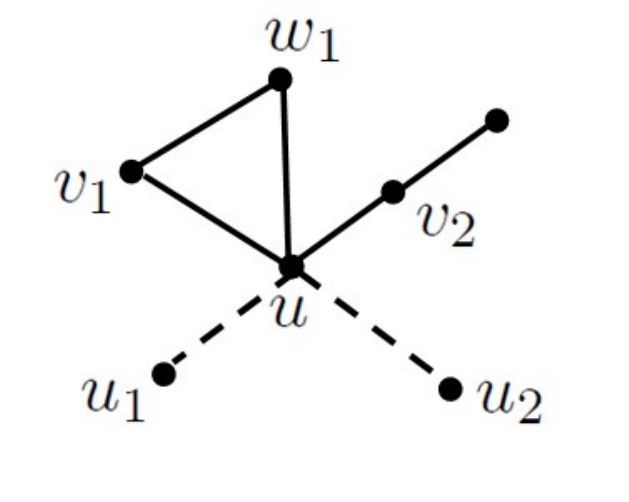}\\
$A_{3}$ & \includegraphics[scale=0.65]{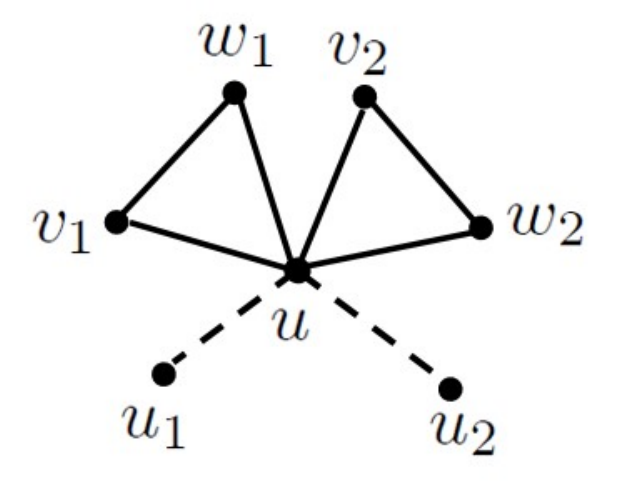} & $A_{4}$ &
\includegraphics[scale=0.65]{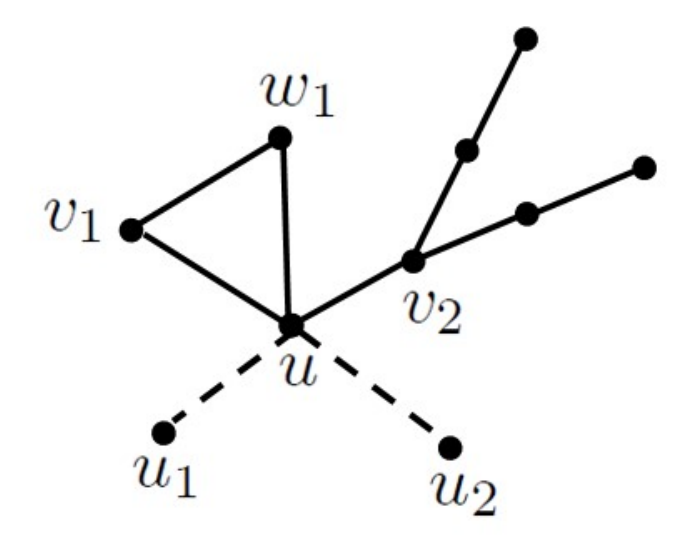}\\
$A_{5}$ & \includegraphics[scale=0.65]{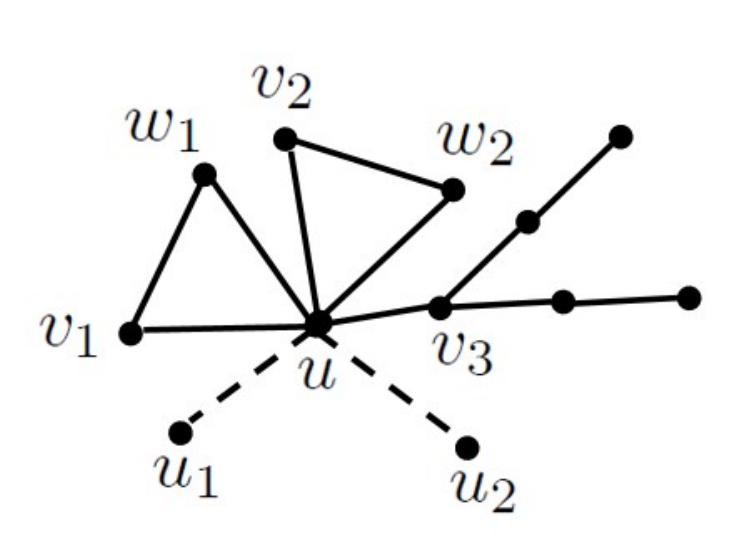} & $A_{6}$ &
\includegraphics[scale=0.65]{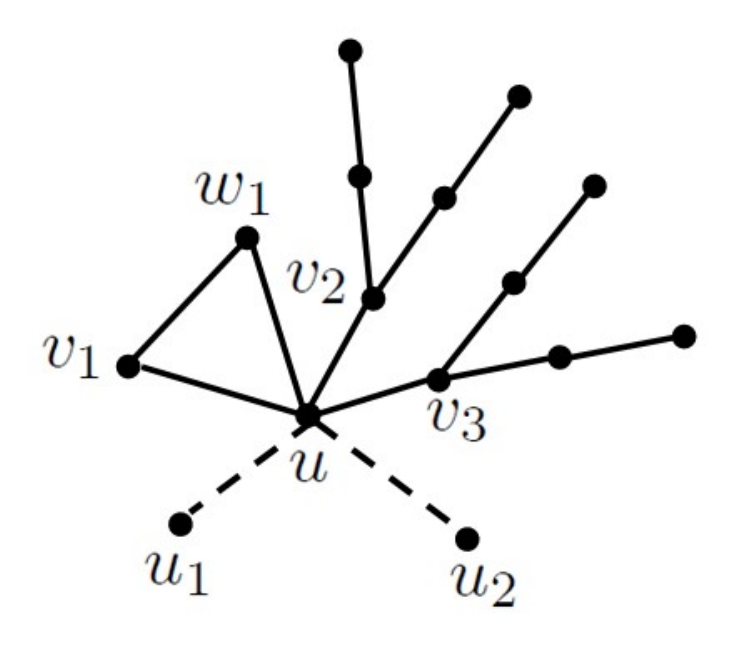}
\end{tabular}
\end{center}
\caption{The singular end-grapes $A_{1},\ldots,A_{6}.$}%
\label{Fig_singularGrapes}%
\end{figure}

The attempt to reduce a coloring of $G$ to a coloring of $G_{0}$ yields the
following proposition.

\begin{proposition}
\label{Prop_reduction1}Let $G$ be a cactus graph with $\mathfrak{c}\geq2$
cycles which is not a grape. If $G$ contains an end-grape $G_{u}$ with a root
component $G_{0}$ such that $G_{u}\not =A_{1},A_{2},A_{3},$ then there exists
a colorable graph $\tilde{G}_{0}\in\{G_{0},G_{0}^{\prime}\}$ such that every
$k$-liec of $\tilde{G}_{0}$, for $k\geq3$, can be extended to a $k$-liec of
$G.$
\end{proposition}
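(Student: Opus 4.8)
The plan is to extend a given $k$-liec $\phi^0$ of $\tilde G_0$ to $G$ by coloring the berries of the end-grape $G_u$ one at a time, using the berry colorings developed in Section~3 (primary, secondary, tertiary), and reconciling them at the root vertex $u$. First I would split into the two cases $d_{G_0}(u)=1$ and $d_{G_0}(u)=2$, since the structure of the end-grape and the available colorings differ. In the case $d_{G_0}(u)=1$, the vertex $u$ has exactly one exit edge $uu_1$ colored, say, $c$ by $\phi^0$, and the berries of $G_u$ are the connected components of $G_u-u$ together with $u$. Since $G_u\ne A_1,A_2,A_3$, one can arrange (possibly after choosing $\tilde G_0=G_0'$ rather than $G_0$, which absorbs one unicyclic berry into the root component as the extra edge $uv_1$) that every remaining berry is either standard or one of $B_1,\dots,B_7$, and hence admits a primary coloring by Proposition~\ref{Prop_primaryColoring}; we color each berry with a primary coloring whose root color is chosen to be one of $\{a,b\}$ (distinct from $c$ when needed), so that $u$ stays $2$- or $3$-chromatic but is never locally regular. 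The point of the bounds $d^c(v)\le 2$, $d^c(w)\le 2$ in the definition of a standard primary coloring is exactly that they guarantee the edges incident to $u$ remain locally irregular after the colorings are summed: the $c$-degree (or $a$-degree) of $u$ will differ from that of each of its colored neighbors.

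\textbf{The case $d_{G_0}(u)=2$.} Here $u$ already has two colored edges $uu_1,uu_2$ in $\phi^0$; the end-grape $G_u$ contributes further berries at $u$, and I would again primary-color them, choosing root colors so as to keep the combined coloring locally irregular at $u$. The delicate point is when a primary coloring of some berry forces the same color at $u$ with too high a degree, or when the berry is an alternative one whose alternative primary coloring makes $u$ bichromatic with a prescribed $a$-degree in $\{2,4\}$; here the conditions ``if $d^c(v)=2$ then $d^a(w)=4$'' (primary) and ``$d^a(w)\ge 3$'' (secondary) are what one uses to check local irregularity of the edges at $u$. When a berry does \emph{not} admit a primary coloring that works, Propositions~\ref{Prop_secondaryColoring1}, \ref{Prop_secondaryColoring2} and \ref{Prop_tertiaryColoring} supply a secondary or tertiary coloring instead; the whole design of those three coloring types is that at least one of them is compatible with whatever $\phi^0$ does at $u$, \emph{provided} the end-grape is not one of the six singular ones $A_1,\dots,A_6$ (and the exclusions $A_1,A_2,A_3$ in the hypothesis handle the $d_{G_0}(u)=1$ obstruction; $A_4,A_5,A_6$ are presumably handled by passing to $G_0'$). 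Throughout one must also verify that the edges \emph{inside} each berry stay locally irregular — but this is immediate, since a primary/secondary/tertiary coloring makes every edge not incident to $u$ locally irregular by definition, and $u$ is not a vertex of the berry's interior.

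\textbf{Choosing $\tilde G_0$ and assembling the extension.} I would make the choice between $\tilde G_0=G_0$ and $\tilde G_0=G_0'$ depending on the multiset of berry types of $G_u$: if every berry can be primary-colored compatibly with an arbitrary coloring of $G_0$, take $\tilde G_0=G_0$; otherwise there is (at most) one problematic unicyclic berry, and moving its root edge $uv_1$ into $G_0'$ means that in $\tilde G_0=G_0'$ the edge $uv_1$ is already colored by $\phi^0$, so we only need to extend the coloring to the rest of that berry (a shrub or smaller unicyclic graph), which we can do by Theorems~\ref{Tm_BaudonSchrub}/\ref{Tm_BaudonTree} or Theorem~\ref{Tm_unicyclic} respectively, with the boundary color dictated by $\phi^0$. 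One must check $\tilde G_0$ is colorable — this follows because $G_0$ (resp.\ $G_0'$) is a proper subgraph-minor-like reduction of the colorable graph $G$ with strictly fewer cycles, and is not in $\mathfrak T$ by the structural description of $\mathfrak T$ (a member of $\mathfrak T$ has a very rigid triangle-path shape that is destroyed by having an end-grape of the kinds we are considering). Finally, summing $\phi^0$ with the chosen berry colorings gives the desired $k$-liec of $G$: all internal berry edges and all edges of $\tilde G_0$ are locally irregular by construction, and the edges incident to the root $u$ are locally irregular by the degree inequalities built into the primary/secondary/tertiary definitions.

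\textbf{Main obstacle.} The hard part will be the bookkeeping at the root vertex $u$: one has to show that for \emph{every} possible behaviour of $\phi^0$ at $u$ (which colors appear on $uu_1,uu_2$, with which $c$-degrees of $u_1,u_2$) there is a compatible choice of berry colorings — a finite but genuinely case-heavy verification that relies on the exclusion of $A_1,\dots,A_6$ to rule out precisely the configurations where no compatible choice exists, and on the freedom to pass to $G_0'$ to absorb the single most troublesome berry. Making the case analysis exhaustive, and pinning down exactly which berry-coloring type to use in each case so that $u$ never becomes a locally regular vertex, is where the real work lies; I expect this is why the authors relegate it to the appendix.
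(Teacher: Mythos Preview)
Your overall plan matches the paper's: color each berry of $G_u$ by a primary (and when needed secondary or tertiary) coloring, then permute the color names in the berry colorings and in $\phi^0$ so that all edges at $u$ become locally irregular. The paper organizes the case analysis by the value of $d^c_{\mathrm{prim}}(u)$ and the berry-type vector $(p,q,r)$, with the number of exit edges as a further sub-split, rather than by $d_{G_0}(u)$ first; but this is a matter of bookkeeping, not substance.

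The real gap is your handling of $G_u\in\{A_4,A_5,A_6\}$. Passing to $G_0'=G_0+uv_1$ does \emph{not} resolve these cases: by definition $v_1$ lies in a \emph{unicyclic} berry of $G_u$, whereas the obstruction in $A_4,A_5,A_6$ is the acyclic berry $B_7$, so absorbing $uv_1$ into the root component leaves the $B_7$ problem untouched. The paper's route is different and exploits a feature of Proposition~\ref{Prop_reduction1} that you overlooked: unlike Proposition~\ref{Prop_reduction2}, it does \emph{not} require the berries of $G_u$ to end up primary/secondary/tertiary-colored in the final $k$-liec. So for $G_u\in\{A_4,A_5,A_6\}$ one deletes all edges of the $B_7$ berry except its root edge $uv$, observes that the resulting end-grape is no longer any $A_i$, applies the general reduction (Proposition~\ref{Prop_reductionFinal}, proved via Lemmas~\ref{Lemma_nonColorable}--\ref{Lemma_reduction_deg1}) to this smaller graph, and then colors the two pendant paths of $B_7$ at $v$ freely with the two remaining colors. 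The resulting coloring of $B_7$ is none of primary, secondary or tertiary --- precisely why $A_4,A_5,A_6$ reappear as genuine obstructions in Proposition~\ref{Prop_reduction2}.

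A secondary point: your colorability argument for $\tilde G_0$ is too optimistic. A non-colorable $G_0$ or $G_0'$ genuinely can occur; the paper isolates this situation as the end-grape $B_2^*$ and treats it first (Lemma~\ref{Lemma_nonColorable}) via $G_0'$, after which every remaining lemma may assume $G$ contains no $B_2^*$ and hence that the relevant root component is colorable.
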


In the above proposition, we must exclude end-grapes $A_{1},A_{2},A_{3}$,
since some of the colorings of $\tilde{G}_{0}$ cannot be extended to such an
end-grape. We address this problem by removing an edge $uw_{j}$ incident to
$u$ from every triangle of $A_{i}$ ($i=1,2,3$). Thus an end-grape $A_{i}$
becomes an acyclic structure in a spanning subgraph of $G.$

In order to be more precise, we need the following notions. Let $G$ be a berry
rooted at $u$ and $x\in V(G)$ a vertex distinct from $u.$ A \emph{tail}
$T_{x}$ of the vertex $x$ is a subgraph of $G$ rooted at $x$ defined in a
following way:

\begin{itemize}
\item if $x$ belongs to the cycle $C$ of $G,$ then $T_{x}$ is the connected
component of $G-E(C)$ which contains $x;$

\item if $x$ does not belong to the cycle of $G,$ then $T_{x}$ is the
connected component of $G-xy$ which contains $x,$ where $y$ is the neighbor of
$x$ such that $d(u,y)<d(u,x).$
\end{itemize}

\noindent We also say that the tail $T_{x}$ is \emph{hanging} at $x$. Now, an
\emph{opening} $A_{i}^{\mathrm{{op}}}$ for $i\in\{1,\ldots,6\}$ is the tree
rooted at $u$ which is obtained from $A_{i}$ by removing an edge $uw_{j}$ from
every triangle of $A_{i}.$ A vertex $x\not =u$ is said to be a \emph{docking
vertex} of a berry $G$ if $T_{x}=A_{i}^{\mathrm{{op}}}$ for $i\in
\{1,\ldots,6\}.$ Let us denote by $D$ the set of all docking vertices of $G.$
A \emph{closure} $G_{X}^{\mathrm{{cl}}}$ of a berry $G$ rooted at $u$ is a
graph obtained by replacing $A_{i}^{\mathrm{{op}}}$ by $A_{i}$ for every
docking vertex $x\in X\subseteq D.$ Notice that within $A_{i}^{\mathrm{{op}}%
},$ for $i\in\{4,5,6\},$ a docking vertex for $A_{1}^{\mathrm{{op}}}$ is
contained which can also be closed in $G_{X}^{\mathrm{{cl}}}.$ We will usually
omit subscript $X$ when it does not lead to confusion. Finally, an
\emph{opening} $G^{\mathrm{{op}}}$ of a cactus graph $G$ is a spanning
subgraph of $G$ obtained by opening all end-grapes $A_{i}$ of $G$ for
$i\in\{1,\ldots,6\}.$ Notice that opening of an end-grape $A_{i}$ creates an
acyclic structure $A_{i}^{\mathrm{{op}}}$ hanging at a vertex of
$G^{\mathrm{{op}}}.$ An \emph{internal end-grape} of $G$ is any end-grape of
$G^{\mathrm{{op}}}.$ Notice that $G$ may not have an internal end-grape, since
$G^{\mathrm{{op}}}$ can be a tree, a unicyclic graph or a grape.

Now, let $G$ be a cactus graph in which every end-grape is $A_{1}$, $A_{2}$ or
$A_{3}.$ Notice that $G^{\mathrm{{op}}}$ cannot contain end-grapes $A_{1}$,
$A_{2}$ or $A_{3},$ since any such end-grape of $G^{\mathrm{{op}}}$ would also
be an end-grape of $G$ and thus opened in $G^{\mathrm{{op}}}.$ Notice that
$G^{\mathrm{{op}}}$ may be a tree, a unicyclic graph, a grape with
$\mathfrak{c}\geq2$ cycles or a cactus graph with $\mathfrak{c}\geq2$ cycles
which is not a grape, so $G^{\mathrm{{op}}}$ admits a $3$-liec according to
Theorems \ref{Tm_BaudonTree}, \ref{Tm_unicyclic}, Propositions
\ref{Prop_grapes1}, \ref{Prop_reduction1}, respectively, but it may not be
possible to extend it to a $3$-liec of $G.$

The case when $G^{\mathrm{{op}}}$ is a tree or unicyclic graph is easily
resolved, as we will do in the proof of main Theorem \ref{Tm_main}, but now we
need to introduce some auxiliary results for the case of $G^{\mathrm{{op}}}$
containing $\mathfrak{c}\geq2$ cycles. If $G^{\mathrm{{op}}}$ is a grape, then
it consists of berries. If $G^{\mathrm{{op}}}$ is not a grape, then we focus
on precisely one end-grape $G_{u}$ of $G^{\mathrm{{op}}}$ which also consists
of berries. Recall that $G_{u}$ is an internal end-grape of $G,$ and we define
$G^{\mathrm{{lop}}}$ to be a graph obtained from $G$ by opening end-grapes
only locally, i.e. only those end-grapes of $G$ whose root vertex belongs to
$G_{u}$. Notice that $G_{u}$ is an end-grape of $G^{\mathrm{{lop}}}$ and
$G_{u}\not =A_{1},A_{2},A_{3}.$

If $G^{\mathrm{{op}}}$ with $\mathfrak{c}\geq2$ cycles is (resp. is not) a
grape, then a $3$-liec of $G^{\mathrm{{op}}}$ (resp. $G^{\mathrm{{lop}}}$)
will be extendable to a $3$-liec of $G$ if the following two claims hold:

\begin{itemize}
\item[(i)] every berry of $G^{\mathrm{{op}}}$ (resp. of end-grape $G_{u}$ in
$G^{\mathrm{{lop}}}$) is colored by a primary, secondary or tertiary coloring;

\item[(ii)] a primary, secondary or tertiary coloring of every berry can be
extended to any of its closures.
\end{itemize}

\noindent The claim (ii) indeed holds, as is established in the following proposition.

\begin{proposition}
\label{Prop_ColoringExtendableClosure1}Let $G$ be a berry rooted at $u$ and
$G^{\mathrm{{cl}}}$ a closure of $G.$ If $G$ is colored by a primary (resp.
secondary, tertiary) coloring, then $G^{\mathrm{{cl}}}$ can also be colored by
a primary (resp. secondary, tertiary) coloring such that every berry of every
end-grape of $G^{\mathrm{{cl}}}$ is colored by a primary, secondary or
tertiary coloring.
\end{proposition}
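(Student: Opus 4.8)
The plan is to induct on the number of docking vertices closed in passing from $G$ to $G^{\mathrm{cl}}$, so it suffices to treat a single closure: assume $G^{\mathrm{cl}}$ is obtained from $G$ by replacing exactly one $A_i^{\mathrm{op}}$ ($i\in\{1,\dots,6\}$) hanging at a docking vertex $x$ with the corresponding singular end-grape $A_i$. Fix whichever of the three colorings $G$ is given — a primary, secondary, or tertiary coloring $\phi_{a,b,c}$ — and note that the only part of $\phi_{a,b,c}$ that needs to change is on the subtree $T_x=A_i^{\mathrm{op}}$; everything outside $T_x$ is untouched, so the defining condition at the root $u$ (monochromatic/bichromatic, together with the $c$-degree or $a$-degree bounds at $v$ and $w$) is automatically preserved, as is local irregularity of every edge not incident to $u$ that lies outside $T_x$. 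Hence the whole statement reduces to a finite, purely local check inside each $A_i$.

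First I would record exactly what constraint $\phi_{a,b,c}$ imposes on $T_x$ at its attachment point $x$: since $T_x=A_i^{\mathrm{op}}$ was a pendant subtree, $x$ carries some colour-degree data (which colours appear at $x$ from inside $T_x$, and with what multiplicities) and, crucially, the colour/degree of the neighbour of $x$ \emph{outside} $T_x$ is fixed by the rest of $\phi_{a,b,c}$. The task is then: given any such admissible boundary condition at $x$, produce a recolouring of the closed copy $A_i$ that (a) keeps the edge from $x$ into the rest of $G$ locally irregular, (b) makes every edge of $A_i$ (except possibly its root edge, in the tertiary/secondary bichromatic case, but $x\ne u$ so in fact every edge of $A_i$) locally irregular, and (c) leaves every \emph{new} end-grape created inside $A_i$ — recall the remark that $A_i^{\mathrm{op}}$ for $i\in\{4,5,6\}$ contains a docking vertex for $A_1^{\mathrm{op}}$, and $A_i$ itself may host sub-end-grapes — coloured by a primary, secondary or tertiary coloring, so that the conclusion is genuinely recursive. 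I would handle the six grapes $A_1,\dots,A_6$ one by one, drawing on the explicit colourings already exhibited in Figures~\ref{Fig_alternativeBerries} and \ref{Fig_tertiaryCol}; since each $A_i$ has bounded size, each case is a short case analysis on the few possible boundary colour-patterns at $x$.

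The case split I expect to need is: (1) $G$ has $d_G(u)=1$ versus $d_G(u)=2$, since the secondary and primary definitions differ there; (2) for each fixed coloring type, the finitely many admissible colour-degree patterns at $x$ inherited from $\phi_{a,b,c}$; and (3) the six choices of $A_i$. Within each cell I would simply display (or cite from the figures) a valid colouring of $A_i$ and verify the three bullet points. A useful simplification: the triangles of $A_i$ each have the edge $uw_j$ that was deleted in $A_i^{\mathrm{op}}$, so closing amounts to re-inserting those edges and then rebalancing colours on the (short) paths of $A_i$ using an inversion or a single recolouring, much as in the shrub-based arguments of Section~2; the even-length-path and triangle-plus-odd-path building blocks of $\mathfrak{T}$ give enough freedom to absorb any parity mismatch.

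The main obstacle, I expect, is the \emph{tertiary} case combined with the requirement $d^c(w)\in\{3,4\}$: re-inserting the deleted triangle edges can push a $c$-degree out of the allowed window, and one must verify for each $A_i$ that the re-balancing can be arranged to keep $d^c(v)\le 2$ and $d^c(w)\in\{3,4\}$ simultaneously — this is exactly the kind of tight degree bookkeeping that forced the introduction of the alternative/secondary/tertiary trichotomy in the first place, and the reason $A_1$ and $A_7$-flavoured configurations had to be excluded from Proposition~\ref{Prop_tertiaryColoring}. A secondary worry is ensuring that every sub-end-grape born inside the closed $A_i$ falls into one of the three colour classes rather than needing a fourth colour locally; here I would lean on the fact that these internal grapes are themselves among the $A_i$ or are standard/alternative berries already covered by Propositions~\ref{Prop_primaryColoring}–\ref{Prop_tertiaryColoring}, so the recursion bottoms out.
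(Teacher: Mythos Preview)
Your high-level plan---close one docking vertex at a time and modify the colouring only on $T_x=A_i^{\mathrm{op}}$---is the same scaffold the paper uses, but there is a genuine gap in how you treat the ``given'' colouring. The paper does \emph{not} prove that an arbitrary primary (resp.\ secondary) colouring of $G$ extends to the closure; it works only with the \emph{specific} colouring built in the proof of Proposition~\ref{Prop_primaryColoring} (resp.\ Proposition~\ref{Prop_secondaryColoring1}), in which the rainbow root of $T=G-u$ was deliberately chosen \emph{not} to be a docking vertex for $A_1^{\mathrm{op}}$. That choice is exactly what guarantees, via Observation~\ref{Obs_B7}, that every docking vertex $x$ for $A_1^{\mathrm{op}}$ is $1$-chromatic, after which the two new edges of the closure can be coloured by a spare colour. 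If you start from an arbitrary primary colouring, $x$ could be $3$-chromatic (this is permitted by Observation~\ref{Obs_B7} and by the definition of a primary colouring), and then no purely local recolouring inside $T_x$ will close the triangle(s) at $x$ while keeping three colours. So your step ``given any such admissible boundary condition at $x$, produce a recolouring'' does not go through as stated; you must either restrict to the specific colouring or rebuild the colouring globally.

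Two smaller points. First, your assertion that ``the defining condition at the root $u$ is automatically preserved'' ignores the case $x\in\{v,w\}$: closing an $A_i^{\mathrm{op}}$ hanging at $v$ or $w$ adds edges incident to $v$ or $w$ and can change $d^c(v)$ or $d^c(w)$; the paper's Case~3 treats this separately and sometimes produces a primary colouring of $G^{\mathrm{cl}}$ that is no longer a liec (see also Observation~\ref{Obs_standardClosureNotLiec}). Second, your forecast that the tertiary case is the main obstacle is off: tertiary colourings exist only for the fixed small berries $B_2,\ldots,B_6$, and the paper handles their closures by the explicit pictures in Figures~\ref{Fig_alternativeBerries} and~\ref{Fig_tertiaryCol}. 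The substantive case analysis (Cases~1--3 in the appendix) is for the standard primary colouring of a general unicyclic berry, split according to whether $x\in\{v,w\}$ and whether $x$ is the rainbow root of $T$.
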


\begin{figure}[h]
\begin{center}
\includegraphics[scale=0.75]{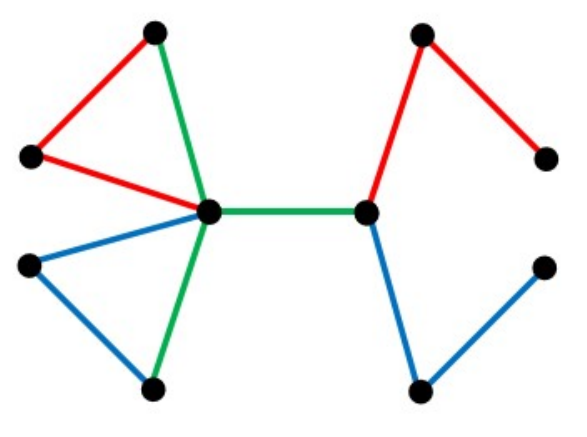}
\end{center}
\caption{A graph $B^{\prime}$ and a $3$-liec of it.}%
\label{Fig_Bprime}%
\end{figure}

As for the condition (i), it does not hold for the grape $B^{\prime}$ shown in
Figure \ref{Fig_Bprime}, since $B^{\prime}$ does admit a $3$-liec, but not
such that every berry is colored by a primary, secondary or tertiary coloring.
For all other grapes it does hold, which is established in the following proposition.

\begin{proposition}
\label{Prop_grapes2}Let $G$ be a cactus graph with $\mathfrak{c}\geq2$ cycles
which is a grape. If $G\not =B^{\prime},$ then $G$ admits a $3$-liec in which
every berry is colored by a primary, secondary or tertiary coloring.
\end{proposition}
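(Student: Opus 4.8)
The plan is to prove Proposition~\ref{Prop_grapes2} by structural induction on the number of cycles $\mathfrak{c}$ of the grape $G$, using the berry decomposition $G=\bigcup_{i=1}^{k}G_i$ at the root $u$. The key point is that each berry $G_i$ is either a unicyclic berry (with $d_{G_i}(u)=2$) or an acyclic berry (a shrub, with $u$ a leaf), and for both kinds we already have a toolbox of colorings from Section~3: primary (standard or alternative), secondary, and tertiary colorings. A primary coloring keeps $u$ monochromatic in color $c$ (standard) or constrains the $c$-degree at $u$ tightly (alternative), and crucially controls $d^c(v)$ and $d^c(w)$ so that when the berries are glued at $u$, the colors can be made to agree with small $c$-degree at $u$; the secondary and tertiary variants are the fallback when a berry refuses a primary liec. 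So the bulk of the argument is a careful case analysis on how many berries of each ``problematic'' type occur, exactly as in the unicyclic case (Theorem~\ref{Tm_unicyclic}) but now with several berries meeting at $u$.

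First I would dispose of the easy case: if every berry $G_i$ admits a primary coloring that is itself a locally irregular edge coloring of $G_i$, then taking the sum $\phi_{a,b,c}=\sum_i \phi^i_{a,b,c}$ and checking the degree of $u$ finishes the proof, since each primary coloring guarantees $d^c(v_i)\le 2$ and edges non-incident to $u$ are already locally irregular; one only needs $d^c_G(u)=k$ to differ from the relevant $d^c$ at each neighbor, which the definitions of standard/alternative primary coloring are set up to provide (possibly after swapping $a\leftrightarrow b$ in some berries, or recoloring one berry with a secondary coloring to break a clash). Next I would handle the berries that do \emph{not} admit a primary liec: by Propositions~\ref{Prop_secondaryColoring1} and~\ref{Prop_secondaryColoring2} such a standard berry admits a secondary coloring, and by Proposition~\ref{Prop_tertiaryColoring} an alternative berry other than $B_1,B_7$ admits a tertiary coloring; the alternative berries $B_1$ and $B_7$ always get an alternative primary coloring (which is a liec for them, by Figure~\ref{Fig_alternativeBerries}). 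The delicate sub-case is when two or more berries force a nontrivial color at $u$ in an incompatible way — this is precisely where the bow-tie-like obstruction $B'$ enters. Here one argues that if $G\neq B'$, there is enough freedom: at most one berry needs the color $c$ ``used up'' at $u$, and the remaining berries can be arranged (using inversions, and the fact that acyclic berries are shrubs admitting $2$-aliecs by Theorem~\ref{Tm_BaudonSchrub}, reattached via Remark~\ref{Observation_maxDeg}-type reasoning) so that the combined coloring at $u$ is locally irregular.

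The main obstacle, and the step I expect to be genuinely technical, is isolating the exceptional grape $B'$: one must show that $B'$ is the \emph{only} grape for which no assignment of primary/secondary/tertiary colorings to its berries glues into a $3$-liec, even though $B'$ itself is colorable. This requires enumerating the small grapes whose berries are all among the alternative berries $B_1,\ldots,B_7$ (since standard berries always admit a primary or secondary coloring by Propositions~\ref{Prop_secondaryColoring1} and~\ref{Prop_secondaryColoring2}, the only trouble comes from alternative berries), checking the finitely many ways their constrained $c$- and $a$-degrees at $u$ can interact, and verifying that all of them except $B'$ still admit a valid combined coloring — typically by showing some berry can be switched to a tertiary coloring (giving $d^c(w)\in\{3,4\}$, which resolves the clash) or that an inversion is available. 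I would organize this as: (1) reduce to grapes all of whose berries are alternative; (2) among these, the count and type of berries is bounded, so finitely many grapes remain; (3) check each, peeling off $B'$ as the unique exception. The induction step for larger grapes then follows because any berry of a large grape contains a smaller sub-grape or a shrub to which the induction hypothesis or Theorem~\ref{Tm_BaudonSchrub} applies, and Proposition~\ref{Prop_ColoringExtendableClosure1} ensures the colorings survive the closure operation. The remaining routine verifications — degree bookkeeping at $u$ and at each $v_i$, and confirming ``every berry of every end-grape is primary/secondary/tertiary colored'' — are deferred to the appendix.
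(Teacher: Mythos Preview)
Your broad instinct --- color each berry by a primary/secondary/tertiary coloring and then repair clashes at $u$ --- is the right one and matches the paper. But two aspects of your plan are off. First, the induction on $\mathfrak{c}$ is both unnecessary and structurally confused: you write that ``any berry of a large grape contains a smaller sub-grape,'' but by definition a berry of a grape is either unicyclic (root of degree~$2$) or a shrub; it never contains two cycles, so there is no sub-grape to recurse into. The paper's proof is entirely direct, with no induction. Second, your proposed reduction ``enumerate the small grapes whose berries are all alternative'' is much coarser than what is actually needed and leaves the finiteness claim unjustified.

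The organizing parameter you are missing is the $c$-degree $d^{c}_{\mathrm{prim}}(u)$ of the root under the primary coloring $\phi^{\mathrm{prim}}_{a,b,c}=\sum_i \phi^{i,\mathrm{prim}}_{a,b,c}$. Since $\mathfrak{c}\ge 2$ forces at least two unicyclic berries, one has $d^{c}_{\mathrm{prim}}(u)\ge 2$ automatically. The paper then defines a \emph{modified} primary coloring $\phi^{u}_{a,b,c}$ (swapping colors in certain alternative berries so that the $a$-edges at $u$ become locally irregular) and observes that if $d^{c}_{\mathrm{prim}}(u)\ge 4$ this is already a liec. Only $d^{c}_{\mathrm{prim}}(u)\in\{2,3\}$ remain, and these force very specific berry compositions: $d^{c}_{\mathrm{prim}}(u)=2$ means exactly two alternative unicyclic berries and nothing else, while the only obstruction at $d^{c}_{\mathrm{prim}}(u)=3$ is the presence of a $B_7$ berry, which forces $p=2$, $r=1$. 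The exception $B'$ then falls out as the single configuration $G_1=G_2=B_1$, $G_3=B_7$; whenever $G\ne B'$ one of the unicyclic berries is not $B_1$ and hence admits a tertiary coloring, which resolves the clash. So the case analysis is short and explicit --- no open-ended enumeration is required.
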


In the case of $G^{\mathrm{{lop}}}$ which is not a grape and thus contains an
end-grape $G_{u}$, the claim (i) again does not hold for all end-grapes
$G_{u}$, since end-grapes $A_{4},A_{5},A_{6}$ arise as new exceptions.

\begin{proposition}
\label{Prop_reduction2}Let $G$ be a cactus graph with $\mathfrak{c}\geq2$
cycles which is not a grape and which does not contain end-grapes $A_{1}%
,A_{2},A_{3}$. If $G$ contains an end-grape $G_{u}$ with a root component
$G_{0}$ such that $G_{u}\not =A_{4},A_{5},A_{6},$ then there exists a
colorable graph $\tilde{G}_{0}\in\{G_{0},G_{0}^{\prime}\}$ such that every
$k$-liec of $\tilde{G}_{0},$ for $k\geq3$, can be extended to a $k$-liec of
$G$ in which every berry of $G_{u}$ is colored by a primary, secondary or
tertiary coloring.
\end{proposition}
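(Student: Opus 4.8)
The plan is to mimic the structure of Proposition~\ref{Prop_reduction1}, but now additionally keep track of the coloring used on every berry of $G_u$, so that the extended $3$-liec of $G$ satisfies the stronger conclusion that every berry of $G_u$ is primary, secondary or tertiary colored. First I would fix an end-grape $G_u$ with root $u$ and root component $G_0$, and set $\tilde G_0\in\{G_0,G_0'\}$ exactly as in Proposition~\ref{Prop_reduction1}; recall $d_{G_0}(u)\in\{1,2\}$ with neighbors $u_1$ (and $u_2$). I would take a $k$-liec $\phi$ of $\tilde G_0$ and read off the data at $u$ in $G_0$: whether $\tilde G_0=G_0'$ (so the edge $uv_1$ into a unicyclic berry is already colored and constrains the primary coloring of that berry), the color(s) of $\phi$ at $u$, and the relevant $a$-degree or $c$-degree of $u_1$ (and $u_2$). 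The goal is to colour the berries $G_1,\dots,G_m$ of $G_u$ so that the colours they force at $u$ are compatible with $\phi$ at $u$, that $u$ itself becomes locally irregular in the combined colouring of $G$, and that each $G_i$ is primary, secondary or tertiary coloured.

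The key steps, in order: (1)~Split the berries of $G_u$ into unicyclic and acyclic ones, and into standard and alternative ones; Propositions~\ref{Prop_primaryColoring}, \ref{Prop_secondaryColoring1}, \ref{Prop_secondaryColoring2} and \ref{Prop_tertiaryColoring} guarantee each berry has a primary colouring, and most also a secondary or tertiary one, so I have a small palette of admissible colourings per berry, each with a known "fingerprint" at $u$ (the colour(s) incident to $u$ from that berry and the resulting $c$- or $a$-degree of the berry's root-neighbour). (2)~Case split on $d_{G_0}(u)$. If $d_{G_0}(u)=1$: pick $\tilde G_0=G_0'$ (absorbing one unicyclic berry's root edge) whenever $G_u$ has a unicyclic berry, otherwise $\tilde G_0=G_0$; colour the remaining berries with primary colourings in colour $c$ at $u$ (standard berries) and tune one berry with a secondary colouring if the $c$-degree of $u$ would otherwise clash with $u_1$. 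If $d_{G_0}(u)=2$: the edge $uu_2$ is already coloured by $\phi$, and I have to balance the colours arriving at $u$ from the berries against $\phi(uu_1)$ and $\phi(uu_2)$; here alternative-primary, secondary and tertiary colourings (which make $u$ bichromatic, or give the berry-root-neighbour a prescribed $c$- or $a$-degree in $\{2,3,4\}$) are precisely the tools needed to hit the right degree pattern at $u$. (3)~Verify local irregularity of every edge of $G$: edges inside a berry and not incident to its root are already locally irregular by the berry colourings; the root edges of the berries are handled by the degree bounds in the definitions of primary/secondary/tertiary colouring; edges of $G_0$ are locally irregular by $\phi$, except possibly those incident to $u$, which is exactly what the fingerprint bookkeeping at $u$ is designed to fix. (4)~Finally, invoke the exclusions: $A_1,A_2,A_3$ are excluded by hypothesis (as in Proposition~\ref{Prop_reduction1}), and $A_4,A_5,A_6$ must now also be excluded because, although they admit a standard primary colouring, one checks that no combination of primary/secondary/tertiary colourings of their (single) berry produces a fingerprint at $u$ compatible with every $k$-liec $\phi$ of $\tilde G_0$ — this is exactly the phenomenon flagged in the text just before the statement.

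The main obstacle, I expect, is step~(2) in the subcase $d_{G_0}(u)=2$ with $G_u$ having a single unicyclic berry of a "tight" type (one of the alternative berries $B_1,\dots,B_7$, or a berry that admits a primary colouring only as a liec): there the choice $\tilde G_0=G_0'$ forces the colour of $uv_1$, which in turn forces the berry's colouring, and one must argue that the remaining freedom — choosing $\tilde G_0=G_0$ instead, or retuning via a secondary/tertiary colouring, possibly combined with swapping colours $a\leftrightarrow b$ in parts of $\phi$ on $G_0$ — always suffices unless $G_u\in\{A_1,\dots,A_6\}$. Making this exhaustive requires a careful enumeration of the possible fingerprints of the alternative berries at $u$ against the possible values of $\phi$ at $u_1,u_2$, and checking that the tertiary colouring (with $d^c(w)\in\{3,4\}$) plugs precisely the gaps left by primary and secondary colourings; this is the technical heart and is deferred to the appendix.
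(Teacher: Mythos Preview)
Your high-level strategy is sound and matches the paper's in spirit: combine primary/secondary/tertiary colourings of the berries with a suitably permuted $k$-liec of $\tilde G_0$, and do bookkeeping at $u$ to ensure local irregularity of the root edges. The paper in fact proves a single unified statement (their Proposition~\ref{Prop_reductionFinal}, excluding all of $A_1,\dots,A_6$ at once) from which Propositions~\ref{Prop_reduction1}, \ref{Prop_reduction2} and~\ref{Prop_reduction3} all follow, so your instinct to ``mimic the structure of Proposition~\ref{Prop_reduction1}'' is exactly right.

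The main organisational difference is the primary case split. You split first on $d_{G_0}(u)\in\{1,2\}$ and then on the berry types; the paper instead first assembles a \emph{modified primary colouring} $\phi_{a,b,c}^u$ of the whole end-grape $G_u$ (the sum of primary colourings of all berries, tweaked so that the $a$-edges at $u$ are locally irregular) and then case-splits on the value $d_{\mathrm{prim}}^c(u)$. This buys a lot: when $d_{\mathrm{prim}}^c(u)\ge 4$ (or $\ge 3$ with no $B_7$ present) the colouring $\phi_{a,b,c}^u$ is already a liec of $G_u$, and the extension reduces to choosing a colour permutation of $\phi^0$ on $G_0$ --- no secondary or tertiary colourings are needed at all. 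The hard work is thereby confined to the small cases $d_{\mathrm{prim}}^c(u)\in\{1,2,3\}$, which force $p+q+r\le 3$ and make the berry-by-berry enumeration (over the triples $(p,q,r)$ and over single/double-monochromatic/double-bichromatic exit edges) finite and short. Your split on $d_{G_0}(u)$ does not isolate the easy cases in the same way, so your step~(2) would have to redo the ``large $c$-degree at $u$'' argument inside both branches.

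A second, smaller point: your rule ``if $d_{G_0}(u)=1$, pick $\tilde G_0=G_0'$'' is more rigid than what the paper does. In the paper, $\tilde G_0=G_0$ is used whenever $d_{\mathrm{prim}}^c(u)\ge 3$ (regardless of $d_{G_0}(u)$), and $G_0'$ is invoked only in a few low-degree subcases (essentially when $d_{\mathrm{prim}}^c(u)\le 2$ and a secondary colouring with $d^c(v_1)=1$ is in play). Committing to $G_0'$ up front forces the colour of $uv_1$ before you know whether that constraint is helpful, which is precisely the tightness you flag in your last paragraph; letting $d_{\mathrm{prim}}^c(u)$ drive the choice avoids this.
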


In order to avoid the new exceptions $A_{4},$ $A_{5}$ and $A_{6},$ we repeat
the same procedure as with $A_{1},$ $A_{2}$ and $A_{3}$. Namely, if $G$
contains only end-grapes $A_{1}$, $A_{2}$, $A_{3}$, and $G^{\mathrm{{op}}}$
contains only end-grapes $A_{4}$, $A_{5}$, $A_{6},$ then we consider the graph
$(G^{\mathrm{{op}}})^{\mathrm{{op}}}.$ Again, $(G^{\mathrm{{op}}%
})^{\mathrm{{op}}}$ may be a tree, a unicyclic graphs, or a cactus graph with
$\mathfrak{c}\geq2$ cycles which may or may not be a grape. Here, we need an
auxiliary result only for the last case when $(G^{\mathrm{{op}}}%
)^{\mathrm{{op}}}$ has $\mathfrak{c}\geq2$ cycles and is not a grape. In that
case, let $G_{u}$ be an end-grape of $(G^{\mathrm{{op}}})^{\mathrm{{op}}}$ and
let $(G^{\mathrm{{lop}}})^{\mathrm{{lop}}}$ be the graph obtained from $G$ by
opening only those end-grapes of $G$ and then $G^{\mathrm{{op}}}$ whose root
vertex is in $G_{u}.$ Notice that $G_{u}$ is an end-grape of
$(G^{\mathrm{{lop}}})^{\mathrm{{lop}}}$ and $G_{u}\not =A_{i}$ for
$i\in\{1,\ldots,6\}.$ According to Proposition
\ref{Prop_ColoringExtendableClosure1}, a closure of every berry of $G_{u}$
admits a primary, secondary or a tertiary coloring, so it only remains to
establish that $(G^{\mathrm{{lop}}})^{\mathrm{{lop}}}$ admits a $3$-liec such
that every berry of $G_{u}$ is colored by a primary, secondary or tertiary coloring.

\begin{proposition}
\label{Prop_reduction3}Let $G$ be a cactus graph with $\mathfrak{c}\geq2$
cycles which is not a grape and which does not contain end-grapes
$A_{1},\ldots,A_{6}$. For an end-grape $G_{u}$ of $G$ with a root component
$G_{0},$ there exists a colorable graph $\tilde{G}_{0}\in\{G_{0},G_{0}%
^{\prime}\}$ such that every $k$-liec of $\tilde{G}_{0},$ for $k\geq3,$ can be
extended to $k$-liec of $(G^{\mathrm{{op}}})^{\mathrm{{op}}}$ in which every
berry of $G_{u}$ is colored by a primary, secondary or tertiary coloring.
\end{proposition}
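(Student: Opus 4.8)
The plan is to combine the machinery already developed with an iterated opening argument. Let $G_u$ be the prescribed end-grape of $G$ with root component $G_0$. Since by hypothesis $G$ contains no end-grape among $A_1,\dots,A_6$, Proposition~\ref{Prop_reduction2} applies directly to $G_u$ (the case $G_u\not= A_4,A_5,A_6$ being automatic here), which produces a colorable graph $\tilde G_0\in\{G_0,G_0'\}$ with the property that every $k$-liec of $\tilde G_0$ extends, for $k\ge3$, to a $k$-liec of $G$ in which every berry of $G_u$ is colored by a primary, secondary or tertiary coloring. So the real work is to pass from this extension-to-$G$ statement to the analogous extension-to-$(G^{\mathrm{op}})^{\mathrm{op}}$ statement. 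The key point is that $(G^{\mathrm{op}})^{\mathrm{op}}$ is obtained from $G$ by repeatedly replacing certain end-grapes $A_i$ by their openings $A_i^{\mathrm{op}}$, and — by the definition of a closure — reinstating these cycles is precisely the operation of taking a closure of a berry. Thus every berry of $G_u$ as it sits inside $(G^{\mathrm{op}})^{\mathrm{op}}$ is an opening (possibly iterated) of the corresponding berry of $G_u$ as it sits inside $G$.

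First I would fix notation: the berries of $G_u$ inside $G$ are $G_1,\dots,G_m$, and inside $(G^{\mathrm{op}})^{\mathrm{op}}$ the $i$-th berry is $\widehat G_i$, an opening of $G_i$; conversely $G_i$ is a closure $(\widehat G_i)^{\mathrm{cl}}$ for a suitable set of docking vertices. Starting from a $k$-liec $\phi_0$ of $\tilde G_0$, apply Proposition~\ref{Prop_reduction2} to obtain a $k$-liec $\phi$ of $G$ in which each $G_i$ carries a primary, secondary or tertiary coloring $\phi^i$. Now I would read Proposition~\ref{Prop_ColoringExtendableClosure1} \emph{in reverse}: since $G_i=(\widehat G_i)^{\mathrm{cl}}$ is colored by a primary (resp.\ secondary, tertiary) coloring, and such a coloring of a closure is built by extending a coloring of the same type of the base berry, the restriction $\phi^i|_{\widehat G_i}$ is a primary (resp.\ secondary, tertiary) coloring of $\widehat G_i$, and moreover every berry of every end-grape of $\widehat G_i$ inherits a primary/secondary/tertiary coloring. (If the statement of Proposition~\ref{Prop_ColoringExtendableClosure1} as used does not literally give this restriction property, I would instead argue directly from the definitions of the three berry colorings — each is a condition on edges not incident to the root plus a degree condition at the two root neighbours $v,w$, and opening an $A_i$ deep inside the berry touches neither the root edge region nor changes the coloring type, so the defining inequalities are preserved.) Assembling, $\phi^{\mathrm{op}}:=\phi_0+\sum_i \phi^i|_{\widehat G_i}$ together with the unchanged coloring on the rest of $G^{\mathrm{op}}$ yields the desired $k$-liec of $(G^{\mathrm{op}})^{\mathrm{op}}$ with every berry of $G_u$ colored by a primary, secondary or tertiary coloring, and $\tilde G_0$ is the same colorable graph returned by Proposition~\ref{Prop_reduction2}.

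One technical point to check carefully is that $G_u$ really does appear verbatim as an end-grape of $(G^{\mathrm{op}})^{\mathrm{op}}$: since $G$ contains none of $A_1,\dots,A_6$, no opening is performed at $G_u$ itself, and the openings performed elsewhere only delete edges in spanning subgraphs, so $G_u$ (and its exit edges) survive intact in $(G^{\mathrm{op}})^{\mathrm{op}}$; and $(G^{\mathrm{op}})^{\mathrm{op}}$ not being a grape guarantees $G_u$ is genuinely an end-grape there. The main obstacle I anticipate is not conceptual but bookkeeping: making the "reverse reading" of Proposition~\ref{Prop_ColoringExtendableClosure1} airtight, i.e.\ verifying that restricting a primary/secondary/tertiary coloring of $(\widehat G_i)^{\mathrm{cl}}$ back to $\widehat G_i$ stays within the same class and keeps all the sub-end-grapes properly colored — in particular handling the remark in the excerpt that an $A_i^{\mathrm{op}}$ for $i\in\{4,5,6\}$ contains a docking vertex for $A_1^{\mathrm{op}}$, so that iterated openings/closures must be tracked consistently. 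Once that is settled the rest is a direct composition of the already-established propositions.
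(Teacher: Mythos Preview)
Your first step is correct and, in fact, is already the whole proof. You invoke Proposition~\ref{Prop_reduction2} to obtain a colorable $\tilde G_0\in\{G_0,G_0'\}$ such that every $k$-liec of $\tilde G_0$ extends to a $k$-liec of $G$ with every berry of $G_u$ colored by a primary, secondary or tertiary coloring. What you overlook is that the hypothesis of Proposition~\ref{Prop_reduction3} says $G$ contains \emph{no} end-grape among $A_1,\dots,A_6$ whatsoever. By the definition of the opening operation (open all end-grapes $A_i$, $i\in\{1,\dots,6\}$), there is nothing to open: $G^{\mathrm{op}}=G$, and hence $(G^{\mathrm{op}})^{\mathrm{op}}=G$. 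The extension to $G$ that you already have \emph{is} an extension to $(G^{\mathrm{op}})^{\mathrm{op}}$, and you are done.

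This is exactly how the paper handles it: Propositions~\ref{Prop_reduction2} and~\ref{Prop_reduction3} are both declared direct consequences of the general reduction Proposition~\ref{Prop_reductionFinal}, with no further argument. Your entire second half --- the ``reverse reading'' of Proposition~\ref{Prop_ColoringExtendableClosure1}, the passage from $G_i$ to $\widehat G_i$, the tracking of iterated openings inside the berries of $G_u$, and the bookkeeping you flag as the main obstacle --- rests on the mistaken belief that $(G^{\mathrm{op}})^{\mathrm{op}}$ is a proper spanning subgraph of $G$ with edges deleted somewhere. It is not: your $\widehat G_i$ equals $G_i$, your restriction $\phi^i|_{\widehat G_i}$ is just $\phi^i$, and $\phi^{\mathrm{op}}=\phi$. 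All of that machinery can simply be dropped. (The context paragraph preceding the proposition reuses the letter $G$ for the original graph before any opening, which is likely the source of the confusion; but the proposition as stated is self-contained, and in its own notation the two graphs coincide.)
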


Now, we can state the main theorem of the paper.

\begin{theorem}
\label{Tm_main}Let $G\not =B,B^{\prime}$ be a colorable cactus graph with
$\mathfrak{c}\geq2$ cycles. Then there exists a $3$-liec of $G$ such that
every berry of every end-grape is colored by a primary, secondary or tertiary coloring.
\end{theorem}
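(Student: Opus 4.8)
The plan is to proceed by induction on the number of cycles $\mathfrak{c}\geq 2$, with the machinery assembled in Sections 3 and 4 doing all the heavy lifting. For the base case $\mathfrak{c}=2$ one unwinds the definitions directly; the inductive step splits according to which kinds of end-grapes $G$ possesses. So suppose $G\neq B,B^{\prime}$ has $\mathfrak{c}\geq 2$ cycles and that the statement holds for all colorable cacti with fewer cycles (and $\neq B,B^{\prime}$). If $G$ is a grape, Proposition~\ref{Prop_grapes2} gives the conclusion immediately (using $G\neq B^{\prime}$). So assume $G$ is not a grape, hence it has an end-grape.

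The main case analysis is on the end-grapes of $G$. \textbf{Case 1: $G$ has an end-grape $G_u\neq A_1,A_2,A_3$.} Then Proposition~\ref{Prop_reduction2} applies provided $G$ also has no end-grape among $A_1,A_2,A_3$ at all, and $G_u\neq A_4,A_5,A_6$; more carefully, one first checks whether $G$ contains any $A_1,A_2,A_3$ end-grape. If not, and if $G_u\neq A_4,A_5,A_6$, then Proposition~\ref{Prop_reduction2} produces $\tilde G_0\in\{G_0,G_0'\}$ such that any $3$-liec of $\tilde G_0$ extends to a $3$-liec of $G$ with every berry of $G_u$ primary/secondary/tertiary colored; since $\tilde G_0$ has one fewer cycle, apply the induction hypothesis to it (checking $\tilde G_0\neq B,B^{\prime}$ — this is one of the places requiring care, since deleting an end-grape could in principle produce an excluded graph, but one argues that if $\tilde G_0=B$ or $B^{\prime}$ then $G$ is small enough to handle by inspection, or that this cannot occur). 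If $G_u\in\{A_4,A_5,A_6\}$, one instead closes: by Proposition~\ref{Prop_ColoringExtendableClosure1} a primary/secondary/tertiary coloring of the relevant berry extends across the closure, so it suffices to colour $G^{\mathrm{op}}$ (which has strictly fewer cycles and, one argues, is $\neq B,B^{\prime}$) by induction and lift via Proposition~\ref{Prop_reduction2} applied inside it. \textbf{Case 2: every end-grape of $G$ is $A_1,A_2$ or $A_3$.} Then one passes to $G^{\mathrm{op}}$, whose end-grapes are all $A_4,A_5,A_6$ or none of the $A_i$. If $G^{\mathrm{op}}$ is a tree or unicyclic, a $3$-liec exists by Theorems~\ref{Tm_BaudonTree},~\ref{Tm_unicyclic} and, using Proposition~\ref{Prop_ColoringExtendableClosure1}, every closure of every berry carries a primary/secondary/tertiary coloring, so the coloring lifts back to $G$. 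If $G^{\mathrm{op}}$ is a grape $\neq B'$, Proposition~\ref{Prop_grapes2} applies to $G^{\mathrm{op}}$; and if $G^{\mathrm{op}}$ has $\mathfrak c\geq 2$ cycles and is not a grape, either it has an end-grape $\neq A_4,A_5,A_6$ (use Proposition~\ref{Prop_reduction2} on $G^{\mathrm{lop}}$, then induction on $G^{\mathrm{lop}}$ which has fewer cycles) or all its end-grapes are $A_4,A_5,A_6$, in which case one iterates once more to $(G^{\mathrm{op}})^{\mathrm{op}}$ and invokes Proposition~\ref{Prop_reduction3} together with the induction hypothesis on $(G^{\mathrm{op}})^{\mathrm{op}}$.

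Throughout, the underlying logic is: peel off an end-grape (possibly after one or two rounds of opening), reduce to a graph $\tilde G_0$ with fewer cycles via one of Propositions~\ref{Prop_reduction1}--\ref{Prop_reduction3}, colour $\tilde G_0$ by the induction hypothesis, and extend the coloring back, where Proposition~\ref{Prop_ColoringExtendableClosure1} guarantees the berry colorings survive re-closing the opened end-grapes. The bookkeeping for the base case and for the degenerate small graphs that appear as $\tilde G_0$ is routine but must be done explicitly.

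\textbf{Main obstacle.} The delicate point is making sure the two excluded graphs $B$ and $B^{\prime}$ are the \emph{only} obstructions, i.e. that the reduced graph $\tilde G_0$ (or $G^{\mathrm{op}}$, or $(G^{\mathrm{op}})^{\mathrm{op}}$) to which we apply the induction hypothesis is itself never equal to $B$ or $B^{\prime}$ — or, when it is, that $G$ is then so constrained that it can be coloured directly. Because $B$ is genuinely uncolorable-by-$3$ and $B'$ fails condition (i), a careless reduction could reintroduce exactly these graphs, so one must check, for each of the possible shapes of an end-grape $G_u$ and each choice $\tilde G_0\in\{G_0,G_0'\}$ dictated by Propositions~\ref{Prop_reduction1}--\ref{Prop_reduction3}, that $\tilde G_0\notin\{B,B'\}$ whenever $G\notin\{B,B'\}$; this is where the bulk of the (appendix) casework lies. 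A secondary subtlety is the interaction between the two rounds of opening in Case 2: one must confirm that opening end-grapes does not destroy colorability and that an internal end-grape of the required non-singular type always exists when $(G^{\mathrm{op}})^{\mathrm{op}}$ is a non-grape cactus with $\mathfrak c\geq 2$ cycles.
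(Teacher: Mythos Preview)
Your overall strategy---induction on $\mathfrak{c}$, peel off an end-grape, extend a coloring of the smaller graph---matches the paper's, and you correctly identify the central role of Propositions \ref{Prop_grapes2}, \ref{Prop_reduction1}--\ref{Prop_reduction3}, and \ref{Prop_ColoringExtendableClosure1}. However, two points need correcting.

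First, your case structure is more tangled than necessary and leaves a gap. The paper does not split first on $A_1,A_2,A_3$ and then on $A_4,A_5,A_6$; it uses a single dichotomy: either $G$ has an end-grape $G_u\neq A_i$ for every $i\in\{1,\dots,6\}$ (then peel off $G_u$ via the unified Proposition \ref{Prop_reductionFinal} in the appendix, which subsumes \ref{Prop_reduction1}--\ref{Prop_reduction3} and has no global hypothesis on the other end-grapes), or every end-grape of $G$ is some $A_i$ (then open them all at once and branch on whether $G^{\mathrm{op}}$ is a tree, unicyclic, or has $\geq 2$ cycles). Your Case~1, as written, does not say what to do when $G$ simultaneously has an end-grape in $\{A_1,A_2,A_3\}$ and one outside it---Proposition \ref{Prop_reduction2} is then inapplicable.

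Second, and more substantively, your ``main obstacle'' is misplaced. The paper never verifies that $\tilde G_0\notin\{B,B'\}$, and this is not where the exclusions are used. The hypothesis $G\neq B'$ enters only when $G$ (or some $G^{\mathrm{lop}}$) is itself a grape, via Proposition \ref{Prop_grapes2}. The hypothesis $G\neq B$ is used in exactly one place: when $G^{\mathrm{op}}$ is a \emph{tree}. There every end-grape of $G$ must be $A_1$, and one needs a $3$-liec of the tree $G^{\mathrm{op}}$ whose rainbow root is \emph{not} a docking vertex for $A_1^{\mathrm{op}}$; Observation \ref{Obs_B7} then forces every docking vertex to be monochromatic, and the removed triangle edges can be reinserted with a fresh color. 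The bow-tie $B$ is exactly the graph in which every candidate rainbow root is a docking vertex, so this step fails for $B$ and nowhere else. Your handling of the tree/unicyclic $G^{\mathrm{op}}$ case (``a $3$-liec exists by Theorems \ref{Tm_BaudonTree}, \ref{Tm_unicyclic}, and Proposition \ref{Prop_ColoringExtendableClosure1} lifts it back'') is therefore the real gap: Proposition \ref{Prop_ColoringExtendableClosure1} extends primary/secondary/tertiary colorings of \emph{berries} to their closures, not arbitrary $3$-liecs of trees to $G$, and a generic $3$-liec of $G^{\mathrm{op}}$ need not extend across the closed triangles. The specific choice of rainbow root (and the separate argument the paper gives when $G^{\mathrm{op}}$ is unicyclic, reducing to a grape $G^{\mathrm{lop}}$ and invoking Proposition \ref{Prop_grapes2}) is precisely the content you are missing.
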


Since $B^{\prime}$ admits a $3$-liec shown in Figure \ref{Fig_Bprime}, we
immediately obtain the following corollary.

\begin{corollary}
Let $G\not =B$ be a colorable cactus graph. Then {\normalsize $\chi_{%
\rm{irr}%
}^{\prime}(G)\leq3.$}
\end{corollary}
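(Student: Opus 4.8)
The plan is to derive Theorem~\ref{Tm_main} by induction on the number of cycles $\mathfrak{c}$ of the cactus graph $G$, using the machinery of end-grapes, berry colorings, and openings built up in the preceding sections. The base case $\mathfrak{c}=1$ is Theorem~\ref{Tm_unicyclic} (a unicyclic graph has no end-grapes, so the ``every berry'' condition is vacuous), and grapes with $\mathfrak{c}\geq2$, $G\neq B'$, are handled by Proposition~\ref{Prop_grapes2}. So the heart of the argument is the inductive step for a non-grape $G$ with $\mathfrak{c}\geq2$, $G\neq B$.

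For the inductive step, I would first pick an end-grape $G_u$ of $G$ with root component $G_0$, and split into cases according to which of the three ``reduction'' propositions applies. The cleanest case is when $G_u\neq A_1,\dots,A_6$: by Proposition~\ref{Prop_reduction2} there is a colorable $\tilde G_0\in\{G_0,G_0'\}$ with $\mathfrak{c}-1$ or fewer cycles; if $\tilde G_0$ is a tree or unicyclic it has a $3$-liec by Theorem~\ref{Tm_BaudonTree} or Theorem~\ref{Tm_unicyclic}, and if it has $\geq 2$ cycles but is $B$ or $B'$ I would need to handle those explicitly (here one checks that the reduction of a graph $G\neq B,B'$ cannot reduce to exactly $B$ or $B'$ in a way that blocks the construction, or one adjusts the choice of end-grape), and otherwise the induction hypothesis (or Proposition~\ref{Prop_grapes2} if $\tilde G_0$ is a grape) gives a $3$-liec of $\tilde G_0$ of the required form, which Proposition~\ref{Prop_reduction2} extends to $G$. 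When $G_u\in\{A_1,A_2,A_3\}$ (so every end-grape of $G$ is one of these), I would pass to the opening $G^{\mathrm{op}}$: if it is a tree or unicyclic, color it by Theorem~\ref{Tm_BaudonTree}/\ref{Tm_unicyclic} and extend over the opened $A_1,A_2,A_3$ directly (this is the ``easily resolved'' case promised in the text, using that the opened structures $A_i^{\mathrm{op}}$ are even paths / small trees that reattach without conflict); if $G^{\mathrm{op}}$ is a grape $\neq B'$ use Proposition~\ref{Prop_grapes2}, and if it is a non-grape with $\geq 2$ cycles, apply Proposition~\ref{Prop_reduction2} to an end-grape $G_u$ of $G^{\mathrm{op}}$ (equivalently work in $G^{\mathrm{lop}}$), obtaining by induction a suitable $3$-liec that extends first to $G^{\mathrm{op}}$ with all berries primary/secondary/tertiary colored, then via Proposition~\ref{Prop_ColoringExtendableClosure1} across all closures back to $G$. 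Finally, in the remaining case where $G$ has only end-grapes $A_1,A_2,A_3$ and $G^{\mathrm{op}}$ has only end-grapes $A_4,A_5,A_6$, I would iterate once more: pass to $(G^{\mathrm{op}})^{\mathrm{op}}$, dispatch the tree/unicyclic/grape subcases as before, and in the non-grape subcase invoke Proposition~\ref{Prop_reduction3} together with Proposition~\ref{Prop_ColoringExtendableClosure1} to lift a $3$-liec of $\tilde G_0$ up through $(G^{\mathrm{lop}})^{\mathrm{lop}}$ and all the closures to $G$.

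Throughout, the key structural point that keeps the induction going is that the ``primary, secondary, or tertiary'' labelling of berries is preserved under taking closures (Proposition~\ref{Prop_ColoringExtendableClosure1}) and is produced by the reduction propositions, so the inductive hypothesis is exactly strong enough to feed itself. I would also need to verify a small bookkeeping fact at each opening step: after opening $A_1,A_2,A_3$ (resp.\ $A_4,A_5,A_6$), the new graph genuinely has fewer cycles and does not reintroduce a forbidden end-grape of the same type, which the text already notes (an $A_1,A_2,A_3$-end-grape of $G^{\mathrm{op}}$ would have been an end-grape of $G$ and hence opened), and the analogous statement for the second opening.

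The main obstacle I expect is the careful handling of the two genuine exceptional graphs $B$ and $B'$: one must ensure that the inductive reduction, applied to any $G\neq B,B'$, never gets ``stuck'' at $B$ or $B'$ — i.e.\ whenever the reduced graph $\tilde G_0$ happens to equal $B$ or $B'$, there is an alternative choice of end-grape, or an alternative $\tilde G_0\in\{G_0,G_0'\}$, or a direct ad hoc $3$-liec of $G$ of the required form. This requires enumerating the (finitely many) ways a cactus can reduce to $B$ or to $B'$ under removing an end-grape or opening, and checking each. Besides that, the argument is largely a matter of organizing the case analysis so that exactly one of Propositions~\ref{Prop_grapes2}, \ref{Prop_reduction2}, \ref{Prop_reduction3}, together with Proposition~\ref{Prop_ColoringExtendableClosure1} and the tree/unicyclic theorems, applies in each configuration, and then quoting it.
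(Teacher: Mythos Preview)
Your plan is essentially the paper's: deduce the Corollary from Theorem~\ref{Tm_main} (together with the explicit $3$-liec of $B'$ in Figure~\ref{Fig_Bprime} and the tree/unicyclic theorems for $\mathfrak{c}\le 1$), and prove Theorem~\ref{Tm_main} by induction on $\mathfrak{c}$ via Proposition~\ref{Prop_grapes2} for grapes, a reduction proposition for non-grapes with a non-singular end-grape, and opening when all end-grapes are singular. So the high-level strategy matches.

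Where your sketch diverges is organizational, because you follow the expository narrative of Section~4 rather than the actual proof in the appendix. Three concrete points. First, in the case ``some end-grape $G_u\neq A_1,\dots,A_6$'' the paper invokes the unified Proposition~\ref{Prop_reductionFinal}, not Proposition~\ref{Prop_reduction2}; your citation of \ref{Prop_reduction2} is wrong here, since that proposition carries the extra hypothesis that $G$ has no end-grape $A_1,A_2,A_3$. Second, in the complementary case the paper does \emph{not} do a two-level opening ($A_{1,2,3}$ then $A_{4,5,6}$): once every end-grape is some $A_i$ with $i\in\{1,\dots,6\}$, it opens all of them at once and then branches on whether $G^{\mathrm{op}}$ is a tree, unicyclic, or has $\ge 2$ cycles (the latter handled via $G^{\mathrm{lop}}$, the induction hypothesis, and Proposition~\ref{Prop_ColoringExtendableClosure1}). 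Third, the paper's base case is $\mathfrak{c}=2$, dispatched by combining Proposition~\ref{Prop_reductionFinal} at one end-grape with Lemma~\ref{Lemma_G0tree} at the other, rather than starting from $\mathfrak{c}=1$.

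Finally, regarding your worry that the reduction might land on $\tilde G_0\in\{B,B'\}$: the paper's proof performs no such enumeration and simply applies the induction hypothesis to $\tilde G_0$. So the finite check you anticipate is not part of the paper's argument; if you want your write-up to match the paper, drop that step (and if you think it is genuinely needed, note that the paper is silent on it).
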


\section{Concluding remarks}

It was recently established by the present authors that the Local Irregularity
Conjecture does not hold in general, since there exists the so called bow-tie
graph $B$ which is colorable, but requires at least four colors for a locally
irregular edge coloring. The bow-tie graph $B$ is a cactus graph, and it was
previously established that only a certain family of cacti, besides odd length
paths and odd length cycles, is not colorable. This all indicated that the
cacti are quite relevant class of graphs for the Local Irregularity
Conjecture. Our previous investigation which led to the counterexample $B,$
made us believe that bow-tie graph $B$ might be the lonely counterexample in
the class of cacti. In this paper, we established that $B$ is indeed the only
counterexample in the class of cacti, i.e. that all other colorable cacti
require at most $3$ colors for the locally irregular coloring. We believe this
holds even in general, so we propose the following conjecture.

\begin{conjecture}
The bow-tie graph $B$ is the only colorable connected graph with $\chi_{%
\rm{irr}%
}^{\prime}(B)>3$.
\end{conjecture}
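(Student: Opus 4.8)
The plan is to extend the cactus result of this paper (Theorem~\ref{Tm_main} and its corollary) to all connected graphs by means of a block decomposition. Recall that any connected graph $G$ decomposes into its blocks (maximal $2$-connected subgraphs together with bridges), organised into a block-cut tree. The idea is to treat the cut vertices exactly as the root $u$ of an end-grape is treated here, processing the block-cut tree from its leaves inward. Each leaf of the block-cut tree is an \emph{end-block} meeting the rest of $G$ in a single cut vertex $u$, playing the role of an end-grape, while each internal block plays the role of the root component. One would develop analogues of the primary, secondary and tertiary colorings for arbitrary $2$-connected end-blocks and show, mimicking Propositions~\ref{Prop_reduction1}--\ref{Prop_reduction3}, that a suitably chosen such coloring of an end-block extends any given $3$-liec of the root component. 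Since the blocks of a cactus are precisely its cycles and bridges, the present paper is exactly the special case in which every block is a cycle.

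The new ingredient is a coloring theory for a single $2$-connected block that is not a cycle. Here I would use an open ear decomposition: every $2$-connected graph is built from a cycle by successively attaching ears. The base case is a cycle, for which Theorem~\ref{Tm_unicyclic} already supplies the flexible colorings required at the cut vertex. For the inductive step one attaches an ear $P$ and extends the coloring. If $P$ has length at least two it contains an internal vertex of degree $2$, so the trimming argument of Observation~\ref{Obs_trimmed} and the shrub machinery of Theorems~\ref{Tm_BaudonSchrub}--\ref{Tm_BaudonTree} apply almost verbatim and the extension is routine. The delicate case is an ear of length one, that is, a \emph{chord}, since it adds no vertex of degree $2$ yet imposes two new local-irregularity constraints at once.

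The conceptual heart of the argument is the principle that a chord should provide extra coloring freedom rather than a new obstruction. Intuitively $B$ is minimal precisely because it is a cactus: its two triangles are edge-disjoint, and it is this sparsity that forces a fourth color. Any block that is not a cycle contains two cycles sharing an edge, and the additional edges ought to be colorable so as to repair a local regularity rather than create one. Concretely, I would show that whenever a candidate $3$-liec fails only at a single edge $e$ incident to a cut vertex, a chord yields a color swap along a short cycle through $e$ that removes the defect while preserving local irregularity everywhere else. Combined with the cactus result, with the high minimum-degree theorem of \cite{Przibilo} and the regular case of \cite{Baudon5} to dispose of dense blocks, and with a finite inspection of the remaining small $2$-connected graphs, this would isolate $B$ as the sole exception.

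The main obstacle is exactly this chord step, because the slogan ``more edges means more freedom'' is false in general: a chord also contributes two edges that must themselves be locally irregular in their colors, so it constrains as much as it liberates. Making the repair-by-swap argument precise, and controlling it simultaneously with the interface conditions such as $d^{c}(v)\le 2$ and $d^{a}(w)\in\{2,4\}$ that the end-grape reduction imposes at the cut vertex, is where a genuinely new idea beyond the cactus machinery is needed. A further difficulty is that, just as $B$ and $B^{\prime}$ escaped the inductive scheme in the cactus setting, the ear induction will almost certainly break for a finite list of small $2$-connected blocks; identifying those exceptions and verifying that none of them, once embedded in a larger graph, forces a fourth color is an unavoidable and error-prone part of the proof.
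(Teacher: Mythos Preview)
The statement you are attempting to prove is not a theorem in the paper but its closing \emph{conjecture}; the paper offers no proof of it. What the paper actually establishes is the cactus case only (Theorem~\ref{Tm_main} and its corollary), and then explicitly proposes the general statement as an open problem. There is therefore no ``paper's own proof'' to compare against.

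Your proposal is not a proof either, and you say so yourself: the chord step ``is where a genuinely new idea beyond the cactus machinery is needed,'' and the finite list of exceptional small $2$-connected blocks remains to be identified and checked. These are not technical details to be filled in but the entire content of the problem beyond what the paper already does. In particular, the block-decomposition reduction you sketch does not even get off the ground as stated: the end-grape reductions in this paper depend crucially on $d_{G_0}(u)\le 2$ at the cut vertex, whereas a cut vertex of a general graph can have arbitrary degree in the root component, so Propositions~\ref{Prop_reduction1}--\ref{Prop_reduction3} do not transfer. The appeal to \cite{Przibilo} and \cite{Baudon5} is also misplaced, since those results concern graphs of large minimum degree or large regular degree globally, not individual blocks, and a $2$-connected block sitting inside $G$ need not inherit any such hypothesis. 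As written, this is a research programme outline rather than a proof; the conjecture remains open.
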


\bigskip\noindent\textbf{Acknowledgments.}~~Both authors acknowledge partial
support of the Slovenian research agency ARRS program\ P1-0383 and ARRS
project J1-1692. The first author also the support of Project
KK.01.1.1.02.0027, a project co-financed by the Croatian Government and the
European Union through the European Regional Development Fund - the
Competitiveness and Cohesion Operational Programme.

\newpage

\section{Appendix}

\begin{proof}
[Proof of Proposition \ref{Prop_primaryColoring}]We already stated that
alternative primary colorings of alternative berries are shown in Figure
\ref{Fig_alternativeBerries}. For a standard berry $G$ rooted at $u,$ let $v$
be a neighbor of $u$ and $w$ the other neighbor of $u$ if $d_{G}(u)=2.$ Also,
denote $T=G-u$ and notice that $T$ is a tree. Assume first that $G$ is a
unicyclic berry. If $T$ is not colorable, then $G\not =B_{1},B_{2}$ implies
that there exists a vertex $z\not =w$ in $T$ such that $T^{\prime}=T-vz$ is
one or two even length paths. Coloring $uv,uw$ and $vz$ by the color $c$ and
$T^{\prime}$ by colors $a$ and $b$ yields a standard primary coloring of $G,$
which is not a liec.

So, let us assume that $T$ is colorable. If $\chi_{%
\rm{irr}%
}^{\prime}(T)\leq2\emph{,}$ then coloring $uv$ and $uw$ by the color $c,$ and
$T$ by colors $a$ and $b,$ yields a standard primary of $G$ which is a liec.
Assume, therefore, that $\chi_{%
\rm{irr}%
}^{\prime}(T)=3.$ If $\Delta(T)=3$ and the only pair of vertices of degree $3$
in $T$ are $v$ and its acyclic neighbor, then there exists an edge $wz$ in $T$
such that $T^{\prime}=T-wz$ admits a $2$-liec $\phi_{a,b}^{\prime}.$ Coloring
edges $uv,$ $uw$ and $wz$ by $c,$ and $T^{\prime}$ by $\phi_{a,b}^{\prime}$
yields a standard primary coloring of $G$ which is not a liec. Otherwise,
$G\not =B_{3},B_{4},B_{5},B_{6}$ implies we can choose a rainbow root $z$ of
$T$ such that $z$ is neither from $\{v,w\}$, nor a docking vertex for
$A_{1}^{\mathrm{{op}}}$ in $G$. According to Observation
\ref{Observation_maxDeg} there exists a $3$-liec $\phi_{a,b,c}^{T}$ of $T$
with the rainbow root $z$ in which color $c$ is used in only one of the shrubs
of $T$ rooted at $z$. Notice that $v$ and $w$ belong to at most two shrubs of
$T,$ so we may assume $\phi_{a,b,c}^{T}$ uses the color $c$ in a third shrub
of $T$ not containing $v$ and $w.$ Thus, coloring $uv$ and $uw$ by the color
$c$ and $T$ by $\phi_{a,b,c}^{T}$ yields a standard primary coloring of $G$
which is also a liec of $G.$

It remains to consider a standard acyclic berry $G$ rooted at $u.$ If $T=G-u$
is not colorable, there exists an edge $vz$ in $T$ such that $T^{\prime}=T-vz$
is one or two even length paths. Coloring $T^{\prime}$ by $a$ and $b,$ and
$uv$ and $vz$ by $c$ yields the result. If $T$ admits a $2$-liec, then
coloring $T$ by $a$ and $b,$ and $uv$ by $c$ yields the result. Finally, if
$T$ requires $3$ colors for a liec, Observation \ref{Observation_maxDeg}
implies there exists a $3$-liec $\phi_{a,b,c}^{T}$ of $T$ such that the
rainbow root $z$ of $T$ is not a docking vertex for $A_{1}^{\mathrm{{op}}}.$
If $z\not =v,$ then we assume $\phi_{a,b,c}^{T}$ uses the color $c$ in a shrub
not containing $v,$ otherwise we assume $\phi_{a,b,c}^{T}$ uses the color $c$
in precisely one shrub so that the $c$-degree of the neighbor of $v$ is
$\geq3$. Thus, coloring $T$ by $\phi_{a,b,c}^{T}$ and $uv$ by $c$ yields the result.
\end{proof}

\begin{remark}
\label{Obs_standardNotLiec}A standard primary coloring of a standard unicyclic
berry constructed in the proof of Proposition \ref{Prop_primaryColoring} is
not a liec only in two following cases:

\begin{itemize}
\item[(i)] $T$ is colorable, $\chi_{%
\rm{irr}%
}^{\prime}(T)=3$, $\Delta(T)=3$ and the only vertices of degree $3$ in $T$ are
$v$ and its acyclic neighbor (resp. $w$ and its acyclic neighbor);

\item[(ii)] $T$ is not colorable.
\end{itemize}
\end{remark}

\medskip

\begin{proof}
[Proof of Proposition \ref{Prop_secondaryColoring1}]Notice that $d_{G}(u)=1$
implies $G$ is an acyclic berry. Let $G^{\prime}=G+ux$ for $x\not \in V(G).$
Since $G$ is not an even length path, $G^{\prime}$ admits a $3$-liec
$\phi_{a,b,c}^{\prime}.$ Since $x$ is a leaf in $G^{\prime}$ and
$d_{G^{\prime}}(u)=2,$ we conclude that $u$ must be monochromatic by
$\phi_{a,b,c}^{\prime},$ say $\phi_{a,b,c}^{\prime}(u)=\{c\}.$ Let $v$ be the
neighbor of $u$ in $G,$ then $d_{G^{\prime}}^{c}(u)=2$ implies $d_{G^{\prime}%
}^{c}(v)\not =2.$ So, the restriction of $\phi_{a,b,c}^{\prime}$ to $G$ is a
secondary coloring of $G.$
\end{proof}

\medskip

\begin{proof}
[Proof of Proposition \ref{Prop_secondaryColoring2}]It is sufficient to prove
that berries from Observation \ref{Obs_standardNotLiec} admit a secondary
coloring. We will consider separately both cases. Let $T^{\prime}=G-uv.$ In
case (i), we may assume $w$ and its acyclic neighbors are of degree $3$ in
$T.$ Hence, $w$ is the only vertex of degree $4$ in $T^{\prime},$ so
Observation \ref{Observation_maxDeg} implies $T^{\prime}$ admits a $2$-liec
$\phi_{a,b}^{T^{\prime}}$. Coloring $uv$ by $c$ and $T^{\prime}$ by
$\phi_{a,b}^{T^{\prime}}$ yields a secondary coloring of $G$ for which
$d_{G}^{c}(v)=1.$

In case (ii), the tree $T^{\prime}$ is either an even length path or
$T^{\prime}$ has precisely one vertex of degree $3,$ that is the vertex $w,$
and three paths hanging at $w.$ Either way, $T^{\prime}$ admits a $2$-liec
$\phi_{a,b}^{T^{\prime}}$. Then again, coloring $uv$ by $c$ and $T^{\prime}$
by $\phi_{a,b}^{T^{\prime}}$ yields a secondary coloring of $G$ for which
$d_{G}^{c}(v)=1.$
\end{proof}

\medskip

\begin{proof}
[Proof of Proposition \ref{Prop_grapes1}]This proposition is a direct
consequence of Proposition \ref{Prop_grapes2} and the fact that $B^{\prime}$
admits a $3$-liec which is established by Figure \ref{Fig_Bprime}.
\end{proof}

\medskip

Before we proceed, let us make the following observation.

\begin{remark}
\label{Obs_B7}In a locally irregular coloring of the berry $B_{7},$ the only
vertex of degree $3$ can be $1$- or $3$-chromatic (i.e. it cannot be $2$-chromatic).
\end{remark}

Propositions \ref{Prop_reduction1}, \ref{Prop_reduction2} and
\ref{Prop_reduction3} are a direct consequence of one more general proposition
which we will state and prove later, so now we proceed with Proposition
\ref{Prop_ColoringExtendableClosure1}.

\medskip

\begin{proof}
[Proof of Proposition \ref{Prop_ColoringExtendableClosure1}]For berries
colored by an alternative primary and a tertiary coloring, the respective
coloring of the closure is illustrated by Figures \ref{Fig_alternativeBerries}
and \ref{Fig_tertiaryCol}. Now, let $G$ be a berry colored by a standard
primary coloring $\phi_{a,b,c}$ as in the proof of Proposition
\ref{Prop_primaryColoring} and let $x\in V(G)\backslash\{u\}$ be a docking
vertex for an acyclic structure $A_{i}^{\mathrm{{op}}}$ where $i\in
\{1,\ldots,6\}$. If $i=1,$ then $x$ cannot belong to the cycle of $G,$ so $x$
is $3$-chromatic by $\phi_{a,b,c}$ only if $x$ is the rainbow root of $T.$ But
we always chose the rainbow root so that it is not a docking vertex for
$A_{1}^{\mathrm{{op}}},$ hence $x$ is $1$- or $2$-chromatic by $\phi_{a,b,c}.$
Observation \ref{Obs_B7} implies $x$ is $1$-chromatic by $\phi_{a,b,c},$ and
the additional two edges of the closure can be colored by any of the two
remaining colors, thus obtained standard primary coloring of the closure is
also a liec. This settles $A_{1}^{\mathrm{{op}}},$ and since acyclic berries
can only contain $A_{1}^{\mathrm{{op}}}$, this settles them too.

Assume that $G$ is a standard unicyclic berry and $i\not =1.$ To make clear
the notation, for all vertices within $A_{i}^{\mathrm{{op}}}$ we will use
labels from Figure \ref{Fig_singularGrapes}, only the label of the docking
vertex will be $x$ instead of $u$ in order to distinguish the root of $G$ and
the root of $A_{i}^{\mathrm{{op}}}.$

\medskip\noindent\textbf{Case 1: }$G$\emph{ is a unicyclic berry where
}$x\not \in \{v,w\}$\emph{ and }$x$\emph{ is not the rainbow root of }%
$T=G-u$\emph{.} This implies $x$ is $1$- or $2$-chromatic, say $\phi
_{a,b,c}(x)\subseteq\{a,b\}.$ If $x$ is a docking vertex for $A_{3}%
^{\mathrm{{op}}}$ or $A_{5}^{\mathrm{{op}}},$ all the additional edges of the
closure can be colored by the color $c.$

If $x$ is a docking vertex for $A_{2}^{\mathrm{{op}}},$ since $v_{1}$ and
$v_{2}$ must be monochromatic in $G,$ we have $d_{G}^{a}(x)\not =2$ and
$d_{G}^{b}(x)\not =2,$ so we may assume $d_{G}^{a}(x)\geq3$ and $\phi
_{a,b,c}(w_{1}v_{1})=a.$ Thus, coloring $uw_{1}$ by $c$ and changing color of
$w_{1}v_{1}$ also to $c$ yields the desired coloring of the closure.

If $x$ is a docking vertex for $A_{4}^{\mathrm{{op}}},$ by Observation
\ref{Obs_B7} we may assume $\phi_{a,b,c}(v_{2})=\{b\},$ which implies
$d_{G}^{b}(x)\not =3$ and (since $v_{1}$ must be monochromatic by
$\phi_{a,b,c}$) also $d_{G}^{b}(x)\not =2$. If $d_{G}^{b}(x)=1$ then the
additional edge $xw_{1}$ can be $b$-colored, otherwise if $d_{G}^{b}(x)=4$
then $xw_{1}$ can be $a$-colored provided that the color of $w_{1}v_{1}$ is
also changed from $b$ to $a.$

If $x$ is a docking vertex for $A_{6}^{\mathrm{{op}}},$ since $v_{1}$ must be
monochromatic in $G,$ we have $d_{G}^{a}(x)\not =2.$ We may assume $d_{G}%
^{a}(x)\not =1$, as otherwise we could swap the colors of the pendant path
containing $v_{1}$ and the berry $B_{7}$ containing $v_{2}.$ Now, $d_{G}%
^{a}(x)\geq3$ implies that the additional edge $xw_{1}$ can be $c$-colored,
provided that the color of $v_{1}w_{1}$ also be changed from $a$ to $c$.

Notice that the obtained coloring of $G^{\mathrm{{cl}}}$ in this case is also
a liec.

\medskip\noindent\textbf{Case 2: }$G$\emph{ is a unicyclic berry where }%
$x$\emph{ is the rainbow root of }$T=G-u$\emph{.} The choice of the rainbow
root in the proof of Proposition \ref{Prop_primaryColoring} implies
$x\not \in \{v,w\}.$ Notice that $x$ cannot be a docking vertex for
$A_{5}^{\mathrm{{op}}}$ or $A_{6}^{\mathrm{{op}}},$ since $x\not \in \{v,w\}$
would imply $d_{T}(x)\geq5,$ i.e. $T$ would admit a $2$-liec and would not
have a rainbow root. Also, $x$ cannot be a docking vertex for $A_{2}%
^{\mathrm{{op}}}=A_{3}^{\mathrm{{op}}},$ since then $d_{T}(x)=4$ and $x$ has
two neighbors of degree $2,$ so the $a$-sequence of $x$ cannot be $4,3,3,2$
which is the necessary sequence of a rainbow root of degree $4.$

Finally, if $x$ is the docking vertex for $A_{4}^{\mathrm{{op}}},$ then $x$
belongs to the cycle of $G$, so let $x_{1}$ and $x_{2}$ be the neighbors of
$x$ from the cycle of $G.$ Let $\phi_{a,b,c}^{T}$ be a $3$-liec of $T$ with
the rainbow root $x.$ Since $d_{G}(v_{1})=2,$ it follows that two of the edges
$x_{1}x,$ $x_{2}x$ and $v_{2}x$ must be colored by a same color by
$\phi_{a,b,c}^{T},$ say $x_{1}x$ and $x_{2}x$ are $a$-colored, $xv_{2}$ is
$b$-colored and $xv_{1}$ is $c$-colored. Then the additional edge $xw_{1}$ can
be $b$-colored. The obtained standard primary coloring of $G^{\mathrm{{cl}}}$
is a liec.

\medskip\noindent\textbf{Case 3: }$G$\emph{ is a unicyclic berry where }%
$x\in\{v,w\}$\emph{.} In this case $x$ is not the rainbow root of $T.$ If $x$
is a docking vertex for $A_{2}^{\mathrm{{op}}}=A_{3}^{\mathrm{{op}}},$ then
$d_{T}(x)=3,$ so Observation \ref{Obs_B7} implies that $x$ is monochromatic in
$T,$ say in color $a.$ Then in case of $A_{3}^{\mathrm{{op}}}$ the edges
$xw_{1}$ and $xw_{2}$ of $G^{\mathrm{{cl}}}$ can be $b$-colored, and in case
of $A_{2}^{\mathrm{{op}}}$ the edge $xw_{1}$ of $G^{\mathrm{{cl}}}$ can be
$b$-colored provided that the color of $w_{1}v_{1}$ is also changed from $a$
to $b.$ In both cases, the obtained primary coloring of $G^{\mathrm{{cl}}}$ is
a liec. If $x$ is a docking vertex for $A_{4}^{\mathrm{{op}}}$ or
$A_{6}^{\mathrm{{op}}},$ then the additional edge $xw_{1}$ of $G^{\mathrm{{cl}%
}}$ can be colored by $c.$ This yields a standard primary coloring of
$G^{\mathrm{{cl}}}$ which is not a liec.

Assume now that $x$ is a docking vertex for $A_{5}^{\mathrm{{op}}}.$ Since
$d_{G}(x)=5,$ we have $d_{T}(x)=4.$ Notice that $d_{T}^{a}(x)=d_{T}^{b}(x)=2$
would imply $xv_{1}$ and $xv_{2}$ are not locally irregular by $\phi_{a,b,c},$
a contradiction. So, we may assume $d_{T}^{a}(x)\geq3,$ which further implies
$\phi_{a,b,c}(xv_{1})=\phi_{a,b,c}(xv_{2})=a.$ Now, if $d_{T}^{a}(x)=4,$ then
the additional edges of $G^{\mathrm{{cl}}}$ can be colored by $b.$ Otherwise,
if $d_{T}^{a}(x)=3,$ then $\phi_{a,b,c}(xv_{3})=b,$ so we change color $b$
into $c$ for all edges of $B_{7}$ contained in $A_{5}$, and then all the
additional edges of $G^{\mathrm{{cl}}}$ can be colored by $b$. This yields a
standard primary coloring of $G^{\mathrm{{cl}}}$ which is not a liec.\medskip

It remains to consider the case when $G$ is a berry colored by a secondary
coloring. If $G$ is a unicyclic berry, then for $x\not =v$ (resp. $x=v$) the
proof is the same as in Case 1 (resp. Case 3) of this proof for a standard
primary coloring. If $G$ is an acyclic berry, then its secondary coloring is
obtained as the restriction to $G$ of any liec of $G^{\prime}=G+uy$ where
$y\not \in V(G).$ Since $G$ is acyclic, a docking vertex $x$ of $G$ can be a
docking vertex only for $A_{1}^{\mathrm{{op}}}.$ When coloring $G^{\prime},$
by Observation \ref{Observation_maxDeg} we can always choose the rainbow root
so that it is not a docking vertex for $A_{1}^{\mathrm{{op}}}.$ Thus,
Observation \ref{Obs_B7} implies $x$ is monochromatic by $\phi_{a,b,c},$ so
the additional two edges of $G^{\mathrm{{cl}}}$ can be colored by any of the
remaining two colors.

Finally, as for end-grapes which arise in $G^{\mathrm{{cl}}}$ they are
$A_{i}^{\mathrm{{op}}}$ with $i\in\{1,\ldots,6\},$ so their berries can be
$B_{1},$ $B_{7}$ or an even length path, and their edges must be locally
irregular by a primary (resp. secondary, tertiary) coloring of
$G^{\mathrm{{cl}}}.$ The restriction of a locally irregular coloring to
$B_{1}$ or an even length path can only be a primary coloring. As for $B_{7},$
we always chose rainbow root so that it is not a docking vertex for
$A_{1}^{\mathrm{{op}}},$ so Observation \ref{Obs_B7} implies $B_{7}$ is also
colored by a primary coloring.
\end{proof}

It will be useful to have a secondary coloring of a standard unicyclic berry
or its closure for which a primary coloring is not a liec, so let us observe
the following.

\begin{remark}
\label{Obs_standardClosureNotLiec}A primary coloring of a closure of a
standard unicyclic berry constructed in the proof of Proposition
\ref{Prop_ColoringExtendableClosure1} is not a liec in the case when closure
$G^{\mathrm{{cl}}}$ of $G$ which is obtained by closing $A_{i}^{\mathrm{{op}}%
}$ for $i\in\{4,5,6\}$ hanging at $v$ or $w$ (here, it is possible that
$A_{i}^{\mathrm{{op}}}$ for $i\in\{4,5,6\}$ hangs at one or both $v$ and $w,$
and if on both it does not have to be a same structure).
\end{remark}

The following lemma is the counterpart of Proposition
\ref{Prop_secondaryColoring2} for closures.

\begin{lemma}
\label{Lemma_secondaryClosure}Let $G$ be a standard berry rooted at $u$ with
$d_{G}(u)=2$ and $G^{\mathrm{{cl}}}$ a closure of $G$. If $G^{\mathrm{{cl}}}$
does not admit a primary coloring which is a liec, then $G^{\mathrm{{cl}}}$
admits a secondary coloring.
\end{lemma}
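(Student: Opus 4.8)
The plan is to mirror the structure of the proof of Proposition~\ref{Prop_secondaryColoring2}, but now starting from the closure $G^{\mathrm{{cl}}}$ rather than the bare berry $G$. By Proposition~\ref{Prop_ColoringExtendableClosure1}, if $G$ itself admitted a primary coloring which is a liec, then so would $G^{\mathrm{{cl}}}$; hence the hypothesis forces $G$ itself to fall under the exceptional cases of Observation~\ref{Obs_standardNotLiec}, and moreover Observation~\ref{Obs_standardClosureNotLiec} tells us precisely which closures of such berries fail to be liecs: those obtained by closing some $A_i^{\mathrm{{op}}}$ with $i\in\{4,5,6\}$ at the vertex $v$ or $w$. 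So the real content is: for a standard unicyclic berry $G$ satisfying case (i) or (ii) of Observation~\ref{Obs_standardNotLiec}, together with closures attaching $A_4^{\mathrm{{op}}}$, $A_5^{\mathrm{{op}}}$ or $A_6^{\mathrm{{op}}}$ at $v$ and/or $w$, we must produce a secondary coloring of the whole closed-up graph.

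\textbf{Key steps.} First I would set $T^{\mathrm{{cl}}} = G^{\mathrm{{cl}}} - uv$ and observe that $T^{\mathrm{{cl}}}$ is a tree obtained from the tree $T^{\prime} = G - uv$ by replacing each docking vertex's opened structure $A_i^{\mathrm{{op}}}$ by the full $A_i$. In both cases (i) and (ii) of Observation~\ref{Obs_standardNotLiec}, $T^{\prime}$ admits a $2$-liec $\phi_{a,b}^{T^{\prime}}$ (as already shown in the proof of Proposition~\ref{Prop_secondaryColoring2}), with $w$ being the unique vertex of degree $\geq 3$ in $T^{\prime}$. The task is then to extend this $2$-liec across the newly inserted triangles of the $A_i$'s. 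For a triangle inserted at a vertex $x$ of degree $\leq 4$ in the current tree — which is the situation here, since the vertices of $A_i^{\mathrm{{op}}}$ have small degree — one can always two-color the triangle's two new edges plus the pendant path of $A_i$ so that local irregularity is preserved at $x$ and at the triangle vertices; this is a finite case check using the degree conditions in Figure~\ref{Fig_singularGrapes}. After closing every such structure we obtain a $2$-liec (or at worst a coloring with only $uv$-incident edges potentially irregular) of $T^{\mathrm{{cl}}}$; then coloring $uv$ by $c$ gives $d_{G^{\mathrm{{cl}}}}^{c}(v)=1$, so $u$ is bichromatic with $\phi(uv)=c$, $\phi(uw)=a$, $d^c(v)=1\leq 2$, and the condition ``if $d^c(v)=2$ then $d^a(w)\geq 3$'' is vacuous. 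That is exactly a secondary coloring.

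\textbf{Main obstacle.} The delicate point is extending the $2$-liec of $T^{\prime}$ through the inserted triangle of an $A_i$ ($i\in\{4,5,6\}$) hanging at $w$, because $w$ already has degree $3$ in $T^{\prime}$ and closing a triangle there raises its degree; I must verify that a $2$-liec of the enlarged tree still exists, i.e. that $w$ does not become a forced rainbow root. By Observation~\ref{Observation_maxDeg} and the $a$-sequence criterion of Observation~\ref{Observation_sequences}, a degree-$4$ or degree-$5$ vertex of a tree forces three colors only when its colored-degree sequence is exactly $4,3,3,2$ (degree $4$) — but a vertex adjacent to two degree-$2$ cycle-neighbors coming from the triangle cannot realize that pattern, and degree $5$ never forces three colors at all. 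So in every closure at $w$ the enlarged tree $T^{\mathrm{{cl}}}$ still has $\chi'_{\mathrm{irr}} \leq 2$, and the $2$-liec extends. The remaining cases — closures only at $v$, or at both $v$ and $w$ — are handled the same way, checking separately the finitely many shapes of $A_4,A_5,A_6$; I expect this to be routine once the degree-sequence obstruction at $w$ is cleared. This completes the construction of a secondary coloring of $G^{\mathrm{{cl}}}$.
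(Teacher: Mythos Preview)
Your reduction in the opening paragraph is incorrect. It is \emph{not} true that a primary coloring of $G$ which is a liec extends via Proposition~\ref{Prop_ColoringExtendableClosure1} to a primary coloring of $G^{\mathrm{cl}}$ which is also a liec: Case~3 in the proof of that proposition explicitly shows that when the docking vertex lies in $\{v,w\}$ and one closes an $A_i^{\mathrm{op}}$ with $i\in\{4,5,6\}$, the resulting primary coloring of $G^{\mathrm{cl}}$ is not a liec, regardless of whether $G$'s coloring was. Hence the hypothesis does \emph{not} force $G$ into cases (i) or (ii) of Observation~\ref{Obs_standardNotLiec}, and you cannot import the structural conclusion ``$w$ is the unique vertex of degree $\geq 3$ in $T'$'' from there. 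The correct reduction is Observation~\ref{Obs_standardClosureNotLiec} alone: some $A_i^{\mathrm{op}}$ with $i\in\{4,5,6\}$ hangs at $v$ or $w$ in $G$ itself.

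With the right reduction the argument is both simpler and avoids your non-tree $T^{\mathrm{cl}}=G^{\mathrm{cl}}-uv$ (which contains the closed triangles and is not a tree, so tree-coloring results do not apply to it). Assume after relabeling that $A_i^{\mathrm{op}}$ hangs at $w$. Then already in the tree $T'=G-uv$ one has $d_{T'}(w)=5$ for $i\in\{5,6\}$, and $d_{T'}(w)=4$ with the leaf $u$ among its neighbours for $i=4$; in either case $T'$ admits a $2$-liec directly by Theorem~\ref{Tm_BaudonTree} and Observation~\ref{Observation_sequences} --- no ``extension through inserted triangles'' is needed, because the high degree is present \emph{before} closing. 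Coloring $uv$ by $c$ then gives a secondary coloring of $G$, which one extends to $G^{\mathrm{cl}}$ by quoting Proposition~\ref{Prop_ColoringExtendableClosure1} for secondary colorings. Finally, since that extension may introduce a $c$-edge at $v$, your claim that $d^{c}(v)=1$ in $G^{\mathrm{cl}}$ is not automatic; the paper closes the argument with a short case check (over which $A_i^{\mathrm{op}}$ hangs at $w$) verifying that $d^{c}_{G^{\mathrm{cl}}}(v)=2$ forces $d^{a}_{G^{\mathrm{cl}}}(w)\geq 3$.
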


\begin{proof}
It is sufficient to prove the lemma for closures from Observation
\ref{Obs_standardClosureNotLiec}. We may assume that $G^{\mathrm{{cl}}}$ is
obtained by closing $A_{4}^{\mathrm{{op}}},$ $A_{5}^{\mathrm{{op}}}$ or
$A_{6}^{\mathrm{{op}}}$ hanging at $w$ (if such structure is hanging at $v$
and not at $w,$ we switch labels $v$ and $w$). Let $T^{\prime}=G-uv.$ If
$A_{5}^{\mathrm{{op}}}$ or $A_{6}^{\mathrm{{op}}}$ is hanging at $w$, then
$d_{T^{\prime}}(w)=5$ implies $T^{\prime}$ admits a $2$-liec $\phi
_{a,b}^{T^{\prime}}$. If $A_{4}^{\mathrm{{op}}}$ is hanging at $w,$ then
$d_{T^{\prime}}(w)=4$ and $d_{T^{\prime}}(u)=1,$ so the neighbors of $w$ in
$T^{\prime}$ cannot form the degree sequence $4,3,3,2,$ hence $T^{\prime}$
again admits a $2$-liec $\phi_{a,b}^{T^{\prime}}.$ Now, $\phi_{a,b}%
^{T^{\prime}}$ can be extended to a secondary coloring $\phi_{a,b,c}$ of $G$
by coloring the edge $uv$ by $c,$ and the coloring $\phi_{a,b,c}$ can be
extended to a secondary coloring of $G^{\mathrm{{cl}}}$ in the same way as in
the proof of Proposition \ref{Prop_ColoringExtendableClosure1}.

Notice that in the case when $A_{4}^{\mathrm{{op}}},$ $A_{5}^{\mathrm{{op}}}$
or $A_{6}^{\mathrm{{op}}}$ is hanging at $v$ too, it might happen that the
$c$-degree of $v$ in $G^{\mathrm{{cl}}}$ is $2,$ so we have to prove the
fourth property of a secondary coloring of $G^{\mathrm{{cl}}},$ i.e. that
$d_{G^{\mathrm{{cl}}}}^{c}(v)=2$ implies $d_{G^{\mathrm{{cl}}}}^{a}(w)\geq3.$
Since $u$ is a leaf in $T^{\prime}$ we have $d_{T^{\prime}}^{a}(w)\geq2.$ If
$A_{4}^{\mathrm{{op}}}$ is hanging at $w,$ since $v_{1}$ must be monochromatic
in $T^{\prime}$ we further have $d_{T^{\prime}}^{a}(w)\geq3.$ The additional
edge $ww_{1}$ of $G^{\mathrm{{cl}}}$ is colored by $c,$ so we conclude
$d_{G^{\mathrm{{cl}}}}^{a}(w)\geq3.$ If $A_{5}^{\mathrm{{op}}}$ is hanging at
$w,$ then $d_{T^{\prime}}^{a}(w)=2$ would imply $wv_{3}$ is the other
$a$-colored edge incident to $w.$ But then the additional edges $ww_{1}$ and
$ww_{2}$ of the closure can be $a$-colored, and thus $d_{G^{\mathrm{{cl}}}%
}^{a}(w)=4\geq3.$ Finally, if $A_{6}^{\mathrm{{op}}}$ is hanging at $w,$ then
$d_{T^{\prime}}^{a}(w)=2$ would imply $wv_{2}$ or $wv_{3}$ is locally regular,
a contradiction.
\end{proof}

\medskip

In order to prove Propositions \ref{Prop_reduction1}, \ref{Prop_reduction2}
and \ref{Prop_reduction3}, we need the definition of one specific end-grape
and several lemmas. An \emph{end-grape} $B_{2}^{\ast}$ is an end-grape
consisting of just one berry and that berry is a non-colorable berry on
triangle which has only one exit edge, as is illustrated by Figure
\ref{Fig_A7}. Notice that the pending paths hanging at $v$ and/or $w$ in
$B_{2}^{\ast}$ can also be of the length zero.

\begin{figure}[h]
\begin{center}
\includegraphics[scale=0.6]{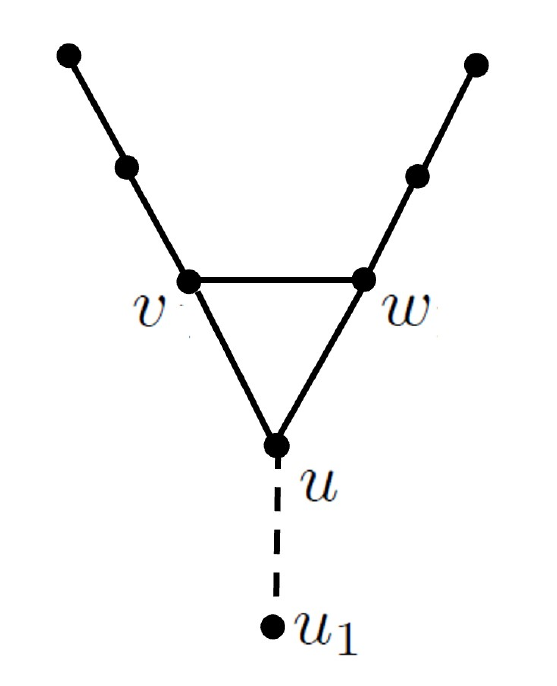}
\end{center}
\caption{End-grape $B_{2}^{\ast}.$}%
\label{Fig_A7}%
\end{figure}

\begin{lemma}
\label{Lemma_nonColorable}Let $G$ be a colorable cactus graph with
$\mathfrak{c}\geq2$ cycles. If $G$ contains an end-grape $G_{u}=B_{2}^{\ast}$
with a root component $G_{0}$, then $G_{0}^{\prime}$ is colorable and any
$k$-liec of $G_{0}^{\prime}$ with $k\geq3$ can be extended to a $k$-liec of
$G$ in which the only berry of $G_{u}$ is colored by a primary coloring.
\end{lemma}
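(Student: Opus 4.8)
The plan is to prove the lemma in two steps. Throughout, write $u_1$ for the neighbour of $u$ along the (unique) exit edge of $G_u=B_2^\ast$, and let $w$ be the triangle-neighbour of $u$ other than $v_1$; then $B_2^\ast$ is the triangle $uv_1w$ together with (possibly trivial) even pendant paths hanging at $v_1$ and at $w$, and since $B_2^\ast$ has a single exit edge, $u$ is a leaf of $G_0$, so $v_1$ is a leaf of $G_0'=G_0+uv_1$ with $d_{G_0'}(u)=2$.

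\emph{Step 1: $G_0'$ is colorable.} Since $B_2^\ast$ contributes exactly one cycle, $G_0'$ is a connected cactus with $\mathfrak{c}-1\ge1$ cycles and with a leaf, so if it were not colorable it would have to lie in $\mathfrak{T}$ (it is neither an odd path nor an odd cycle). Assuming $G_0'\in\mathfrak{T}$, the leaf $v_1$ must be the tip of an even pendant path $P'=p_0p_1\cdots p_\ell$ (with $\ell$ even) hanging at a degree-$3$ triangle vertex $p_0$; since $u$ is the unique neighbour of the tip $v_1=p_\ell$ and $d_{G_0'}(u)=2$, we get $u=p_{\ell-1}$ and $u_1=p_{\ell-2}$. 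I would then rebuild $G$ from $G_0'$ by $\mathfrak{T}$-construction steps: deleting $p_1,\dots,p_\ell$ leaves a graph $G^-\in\mathfrak{T}$ in which $p_0$ is a degree-$2$ triangle vertex (nothing else was ever attached along $P'$); glueing to $p_0$ the gadget ``a triangle together with an odd pendant path of length $\ell-1$'', so that the odd path is $p_0p_1\cdots p_{\ell-1}$ and the triangle is $uv_1w$, reconstructs $G_0'$ together with the triangle edges $uw$ and $v_1w$; and finally hanging the even pendant paths of $B_2^\ast$ at the degree-$2$ triangle vertices $v_1$ and $w$ completes $G$. Hence $G\in\mathfrak{T}$, contradicting that $G$ is colorable; so $G_0'$ is colorable.

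\emph{Step 2: extending a $k$-liec.} Let $\phi_0$ be a $k$-liec of $G_0'$, $k\ge3$. The key observation is that, $v_1$ being a leaf, local irregularity of $uv_1$ forces $d^{\phi_0(uv_1)}_{G_0'}(u)\ne1$, hence $=2$; therefore $\phi_0(uu_1)=\phi_0(uv_1)=:c$ and, using local irregularity of $uu_1$, also $d^c_{G_0'}(u_1)\ne2$. Fixing colours $a\ne c$ and $b\notin\{a,c\}$ (here $k\ge3$ is used), I would extend $\phi_0$ to $G$ by colouring $uw$ and $v_1w$ with $a$, and colouring each nontrivial even pendant path of $B_2^\ast$ with its first edge $b$ and the remaining edges in the pattern $bb\,aa\,bb\,aa\cdots$ in blocks of two. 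It then remains to verify: (i) the restriction to $B_2^\ast$ is an alternative primary coloring — $u$ is bichromatic with $\phi(uv_1)=c$, $\phi(uw)=a$, one has $d^c_{B_2^\ast}(v_1)=1$ and $d^a_{B_2^\ast}(w)=2$, and $uv_1$ is the only edge of $B_2^\ast$ not locally irregular inside $B_2^\ast$; and (ii) the extension is a $k$-liec of $G$ — the pendant-path edges are locally irregular by the block pattern, the identities $d^a_G(u)=d^a_G(v_1)=1\ne2=d^a_G(w)$ handle $uw$ and $v_1w$, and $d^c_G(u)=2$ together with $d^c_G(v_1)=1$ and $d^c_G(u_1)\ne2$ handles $uv_1$ and $uu_1$, while every other edge of $G_0'$ keeps its $\phi_0$-status because $uu_1$ and $uv_1$ are the only edges of $G_0'$ incident with $u$ or with $v_1$.

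The main obstacle is Step 1: the structural argument that $G_0'\in\mathfrak{T}$ forces $G\in\mathfrak{T}$ needs a careful check that the described re-attachment is an admissible sequence of $\mathfrak{T}$-construction steps — in particular that $u$ and $v_1$ occupy the last two positions of the even pendant path $P'$, that $\ell-1$ is odd, and that $p_0$ carries no other hanging structure. A lesser, bookkeeping-type point is to confirm that every form of $B_2^\ast$ (a triangle, or a triangle with an even pendant path at one or at both of $v_1,w$) is an alternative berry, so that the ``primary coloring'' demanded by the lemma is the alternative one; the length-zero pendant paths are then simply degenerate instances of the extension in Step 2, in which the colour $b$ is not needed.
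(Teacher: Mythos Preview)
Your proof is correct and follows essentially the same approach as the paper's: both argue that non-colorability of $G_0'$ forces $G\in\mathfrak{T}$, then use that $v_1$ being a leaf in $G_0'$ forces $uv_1$ and $uu_1$ to share a colour $c$, and extend by colouring $uw,\,v_1w$ with $a$ and the pendant paths with the remaining colour(s). The only differences are cosmetic---the paper invokes the global trimming convention (so pendant paths have length at most~$2$ and can simply all be coloured $b$) and compresses your Step~1 into the single line ``if $G_0'$ is non-colorable then $G$ would be non-colorable''; your version spells out the $\mathfrak{T}$-reconstruction explicitly, which is fine.
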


\begin{proof}
Denote by $v$ and $w$ the neighbors of $u$ in $G_{u}.$ Since $G_{u}$ is a
single non-colorable triangular berry, it consists of the triangle $uvw$ and
possibly an even length path hanging at $v$ and/or $w$. Recall that
$G_{0}^{\prime}=G_{0}+uv$ and $u_{1}$ denotes the only neighbor of $u$ in
$G_{0}.$ If $G_{0}^{\prime}$ is non-colorable, then $G$ would be
non-colorable, a contradiction. Thus, $G_{0}^{\prime}$ is colorable and let
$\phi_{a,b,c}^{\prime0}$ be a $k$-liec of $G_{0}^{\prime}$ for $k\geq3.$ Since
$G_{u}$ has a single exit edge, we have $d_{G_{0}^{\prime}}(u)=2$ and
$d_{G_{0}^{\prime}}(v)=1.$ Thus, $vu$ and $uu_{1}$ must be colored by a same
color by $\phi_{a,b,c}^{\prime0},$ say color $c.$ We can now extend the
coloring $\phi_{a,b,c}^{\prime0}$ of $G_{0}^{\prime}$ to a coloring of $G$ by
coloring edges $uw$ and $wv$ by $a,$ and the remaining edges of $G_{u}$ by
$b.$ Thus we obtain a $k$-liec of $G$ in which the only berry of $G_{u}$ is
colored by an alternative primary coloring (since $G_{u}=B_{1}$ or
$G_{u}=B_{2},$ so $G_{u}$ is an alternative berry).
\end{proof}

Now we can start proving the reductions. In order to do so, we need to
introduce a primary coloring of an end-grape and its modification. First, let
us introduce a standard notation in $G_{u}$ and $G_{0}.$ Each berry of $G_{u}$
is denoted by $G_{i}$, where:

\begin{itemize}
\item $G_{i}$ is an alternative unicyclic berry for $i=1,\ldots,p;$

\item $G_{i}$ is a standard unicyclic berry for $i=p+1,\ldots,p+q;$

\item $G_{i}$ is an acyclic berry (standard or alternative) for
$i=p+q+1,\ldots,p+q+r.$
\end{itemize}

Let $v_{i}$ be a neighbor of $u$ in $G_{i},$ and $w_{i}$ the other neighbor of
$u$ if $d_{G_{i}}(u)=2.$ As for the root component $G_{0},$ recall that
$d_{G_{0}}(u)\leq2$ and the two neighbors of $u$ in $G_{0}$ are denoted by
$u_{1}$ and $u_{2}.$ Also, recall that $G_{0}^{\prime}=G_{0}+uv_{1}.$ The
degree of a vertex $v$ in $G$ (resp. $G_{u},$ $G_{i},$ $G_{0},$ $G_{0}%
^{\prime}$) will be denoted by $d(v)$ (resp. $d_{u}(v),$ $d_{i}(v),$
$d_{0}(v),$ $d_{0}^{\prime}(v)$).

Now, we define a \emph{primary coloring} of an end-grape $G_{u}$ by
\[
\phi_{a,b,c}^{\mathrm{{prim}}}=\sum_{i=1}^{p+q+r}\phi_{a,b,c}%
^{i,\mathrm{{prim}}},
\]
where $\phi_{a,b,c}^{i,\mathrm{{prim}}}$ denotes a primary coloring of the
berry $G_{i}.$ The color degree, say $a$-degree, of a vertex $v\in V(G_{u})$
by $\phi_{a,b,c}^{\mathrm{{prim}}}$ will be denoted by $d_{\mathrm{{prim}}%
}^{a}(v).$ Notice the following, if $d_{\mathrm{{prim}}}^{c}(u)\geq4$ then
every $c$-colored edge of $G_{u}$ incident to $u$ is locally irregular by
$\phi_{a,b,c}^{\mathrm{{prim}}}$, and if $d_{\mathrm{{prim}}}^{c}(u)\geq3$
then the same holds only if $G_{u}$ does not contain a berry $B_{7}$. As for
$a$-colored edges incident to $u,$ namely edges $uw_{i}$ in alternative
unicyclic berries, they are locally irregular by $\phi_{a,b,c}%
^{i,\mathrm{{prim}}},$ but they might not be locally irregular by
$\phi_{a,b,c}^{\mathrm{{prim}}}.$ To be more precise, some of the $a$-colored
edges $uw_{i}$ will not be locally irregular in two cases:

\begin{itemize}
\item $d_{\mathrm{{prim}}}^{a}(u)=2$ and there exists an alternative berry
$G_{i}$ in $G_{u}$ such that $d_{\mathrm{{prim}}}^{a}(w_{i})=2,$

\item $d_{\mathrm{{prim}}}^{a}(u)=4$ and there exists an alternative berry
$G_{i}$ in $G_{u}$ such that $d_{\mathrm{{prim}}}^{a}(w_{i})=4.$
\end{itemize}

In order to avoid this, we define a \emph{modified primary coloring}
$\phi_{a,b,c}^{u}$ of an end-grape $G_{u}$ in the following way:

\begin{itemize}
\item If every $a$-colored edge by $\phi_{a,b,c}^{\mathrm{{prim}}}$ is locally
irregular in $G_{u},$ we define $\phi_{a,b,c}^{u}=\phi_{a,b,c}^{\mathrm{{prim}%
}};$

\item if $d_{\mathrm{{prim}}}^{a}(u)=2$ and there exists an alternative berry
$G_{i}$ in $G_{u}$ with $d_{\mathrm{{prim}}}^{a}(w_{i})=2,$ then $\phi
_{a,b,c}^{u}$ is defined as the coloring obtained from $\phi_{a,b,c}%
^{\mathrm{{prim}}}$ by swapping colors $a$ and $c$ in every alternative
unicyclic berry with $d_{\mathrm{{prim}}}^{a}(w_{i})=2;$

\item if $d_{\mathrm{{prim}}}^{a}(u)=4$ and there exists an alternative berry
$G_{i}$ in $G_{u}$ with $d_{\mathrm{{prim}}}^{a}(w_{i})=4,$ then $\phi
_{a,b,c}^{u}$ is defined as the coloring obtained from $\phi_{a,b,c}%
^{\mathrm{{prim}}}$ by swapping colors $a$ and $b$ in precisely one
alternative unicyclic berry with $d_{\mathrm{{prim}}}^{a}(w_{i})=4.$
\end{itemize}

Notice that $d_{\mathrm{{prim}}}^{c}(u)=d_{u}^{c}(u)$. Also, $\phi_{a,b,c}%
^{u}$ is a liec of $G_{u}$ if $d_{u}^{c}(u)\geq4$ or $d_{u}^{c}(u)\geq3$ and
$B_{7}\not \in G_{u}$. Lastly, if $u$ is $3$-chromatic by $\phi_{a,b,c}^{u},$
then $d_{u}^{a}(u)=3$ and $d_{u}^{b}(u)=1.$

\medskip

\begin{proof}
[Proof of Proposition \ref{Prop_grapes2}]Denote by $\phi_{a,b,c}$ the desired
$3$-liec of $G.$ We will consider entire $G$ to be an end-grape of itself,
i.e. $G=G_{u}$ where $u$ is the vertex shared by all cycles in $G.$ Since
$\mathfrak{c}\geq2,$ it follows that $d_{\mathrm{{prim}}}^{c}(u)\geq2.$ If
$d_{\mathrm{{prim}}}^{c}(u)\geq4,$ then $\phi_{a,b,c}=\phi_{a,b,c}^{u}.$ So,
let us consider the remaining two cases, that is $d_{\mathrm{{prim}}}%
^{c}(u)=3$ and $d_{\mathrm{{prim}}}^{c}(u)=2.$

Assume first that $d_{\mathrm{{prim}}}^{c}(u)=3,$ and notice that
$\phi_{a,b,c}^{u}$ will be the desired $3$-liec of $G$ in all cases except
when $G$ contains a berry $B_{7}.$ If $G$ contains a berry $B_{7},$ notice
that $d_{\mathrm{{prim}}}^{c}(u)=3$ and $\mathfrak{c}\geq2$ imply $p=2$ and
$r=1.$ Then $G\not =B^{\prime}$ implies $G_{1}\not =B_{1},$ so $G_{1}$ admits
a tertiary coloring. Now, if $d_{\mathrm{{tert}}}^{c}(w_{1})=3$ we have
$\phi_{a,b,c}=\phi_{a,b,c}^{1,\mathrm{{tert}}}+\phi_{a,b,c}^{2,\mathrm{{prim}%
}}+\phi_{a,b,c}^{3,\mathrm{{prim}}}$, and if $d_{\mathrm{{tert}}}^{c}%
(w_{1})=4$ we have $\phi_{a,b,c}=\phi_{a,b,c}^{1,\mathrm{{tert}}}+\phi
_{a,b,c}^{2,\mathrm{{prim}}}+\phi_{a,c,b}^{3,\mathrm{{prim}}}.$

Assume now that $d_{\mathrm{{prim}}}^{c}(u)=2.$ Since $\mathfrak{c}\geq2$, the
only possibility is $p=2.$ If $d_{\mathrm{{prim}}}^{c}(v_{1}%
)=d_{\mathrm{{prim}}}^{c}(v_{2})=1$ we have $\phi_{a,b,c}=\phi_{a,b,c}%
^{1,\mathrm{{prim}}}+\phi_{b,a,c}^{2,\mathrm{{prim}}},$ else if
$d_{\mathrm{{prim}}}^{c}(v_{1})=1$ and $d_{\mathrm{{prim}}}^{c}(v_{2})=2,$
since $d_{\mathrm{{prim}}}^{c}(v_{2})=2$ implies $d_{\mathrm{{prim}}}%
^{a}(w_{2})=4,$ we have $\phi_{a,b,c}=\phi_{a,b,c}^{1,\mathrm{{prim}}}%
+\phi_{c,a,b}^{2,\mathrm{{prim}}}.$ Finally if $d_{\mathrm{{prim}}}^{c}%
(v_{1})=d_{\mathrm{{prim}}}^{c}(v_{2})=2,$ the fourth property of an
alternative primary coloring implies $d_{\mathrm{{prim}}}^{a}(w_{1}%
)=d_{\mathrm{{prim}}}^{a}(w_{2})=4,$ so we have $\phi_{a,b,c}=\phi
_{a,b,c}^{1,\mathrm{{prim}}}+\phi_{a,c,b}^{2,\mathrm{{prim}}}.$
\end{proof}

\medskip

Now, Propositions \ref{Prop_reduction1}, \ref{Prop_reduction2} and
\ref{Prop_reduction3} can be unified into one more general proposition, but
let us first prove the following four lemmas which will be necessary to prove
it. In all the following lemmas, for a coloring $\phi_{a,b,c}^{0}$ of a root
component $G_{0}$ we assume $\phi_{a,b,c}^{0}(u_{1}u)=a$ and $\phi_{a,b,c}%
^{0}(u_{2}u)\in\{a,b\}.$

\begin{lemma}
\label{Lemma_reduction_deg4}Let $G$ be a cactus graph with $\mathfrak{c}\geq2$
cycles which is not a grape and does not contain an end-grape $B_{2}^{\ast}$.
If $G$ contains an end-grape $G_{u}$ with $d_{\mathrm{{prim}}}^{c}(u)\geq4,$
then every $k$-liec of $G_{0}$ for $k\geq3$ can be extended to a $k$-liec of
$G$ so that every berry of $G_{u}$ is colored by a primary, secondary or
tertiary coloring.
\end{lemma}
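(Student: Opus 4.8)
The plan is to colour $G_u$ and glue the result onto the given $k$-liec $\phi^0$ of $G_0$, using that $G_0$ and $G_u$ share only the vertex $u$, so that the only edges whose local irregularity is in doubt are those at $u$. Since $d^c_u(u)=d^c_{\mathrm{prim}}(u)\ge 4$, the modified primary colouring $\phi^u_{a,b,c}$ of $G_u$ is a liec of $G_u$ in which every berry is primary-coloured (and $u$ is monochromatic or bichromatic, or, if $3$-chromatic, has $d^a_u(u)=3$ and $d^b_u(u)=1$). Put $\phi=\phi^0+\phi^u$. Every edge of $G$ not incident to $u$ is locally irregular under $\phi$ — by $\phi^0$ if it lies in $G_0$, by the primary/secondary/tertiary colouring of its berry if it lies in $G_u$ — so it remains only to examine the edges at $u$: the edge $u_1u$ (colour $a$), the possible edge $u_2u$ (colour $a$ or $b$), the $c$-coloured root edges of the berries, and the root edges $uw_i$ of the alternative unicyclic berries (colour $a$, except for the one berry where the modification swapped $a\leftrightarrow b$).

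For the $c$-edges this is immediate: if $uy$ is $c$-coloured then $y$ is a berry-neighbour of $u$, so $d^c_G(y)=d^c(y)$, which is at most $2$ for a primary or secondary colouring and lies in $\{3,4\}$ only for the outer neighbour of a tertiary colouring, whereas $d^c_G(u)=d^c_u(u)\ge 4$; the only way such an edge could fail is $d^c_u(u)=4$ together with a tertiary-coloured berry whose outer neighbour has $c$-degree $4$, and this never binds us, because each time we replace the primary colouring of an alternative unicyclic berry by its tertiary colouring (Proposition~\ref{Prop_tertiaryColoring}) we gain a second $c$-edge at $u$, pushing $d^c_u(u)$ above $4$.

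It remains to reconcile colours $a$ and $b$ at $u$. Write $\alpha=d^a_{G_0}(u)\in\{1,2\}$ and $\beta=d^b_{G_0}(u)\in\{0,1\}$, both fixed by $\phi^0$. If $u$ can be made monochromatic in colour $c$ within $G_u$, then $d^a_G(u)=\alpha$ and $d^b_G(u)=\beta$ are unchanged from $\phi^0$, hence every edge of $G_0$ at $u$ stays locally irregular and $\phi$ is the required $k$-liec. This holds whenever $G_u$ has no alternative unicyclic berry, and also when no alternative unicyclic berry of $G_u$ equals $B_1$ or $B_7$: recolour each of those berries by its tertiary colouring, so that $u$ becomes monochromatic in $c$ on all of $G_u$ while $d^c_u(u)$ only grows (to at least $5$, in fact).

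The genuine work is the case that $G_u$ has an alternative unicyclic berry equal to $B_1$ or $B_7$; such a berry admits no tertiary colouring and no usable secondary colouring, so it must carry its alternative primary colouring, whose root edge $uw_i$ at $u$ is coloured $a$ with $d^a(w_i)\in\{2,4\}$ (and $d^a(w_i)=2$ when the berry is a bare triangle), which prevents $u$ from being monochromatic in $c$ on $G_u$. Keeping the remaining alternative unicyclic berries tertiary-coloured, we obtain $d^a_G(u)=\alpha+\ell$, where $\ell\ge 1$ counts the $B_1$- and $B_7$-berries of $G_u$; the edges at $u$ that still need attention are the finitely many coloured $a$ or $b$, and their local irregularity reduces to a few numerical conditions on $\alpha$, $\beta$, $\ell$, the values $d^a(w_i)$, and the $a$-degrees in $G_0$ of the $G_0$-neighbours of $u$. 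Most patterns are satisfied automatically (for instance $\alpha+\ell=3$, since $d^a(w_i)\in\{2,4\}$), and in the residual patterns the conflict is removed by a local move that still leaves every berry primary-, secondary- or tertiary-coloured: swapping the two colours used in one $B_1$- or $B_7$-berry (moving its root edge off colour $a$, or onto a vertex of smaller $a$-degree), choosing the colouring of a pendant even path of a $B_1$-berry so that $d^a(w_i)=3$, or switching a suitable standard unicyclic berry to a secondary colouring (Proposition~\ref{Prop_secondaryColoring2}) to add one more $a$-edge at $u$; colourability of the auxiliary graphs involved is handled as in Lemma~\ref{Lemma_nonColorable}, and the hypotheses that $G$ is not a grape and contains no end-grape $B_2^{\ast}$ ensure that $G_u$ is a bona fide end-grape with a nontrivial root component carrying $\phi^0$. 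I expect this last step — showing that in each of the finitely many $(\ell,\alpha,\beta,\text{berry-type})$ configurations one of the local moves succeeds, which forces one to unfold the alternative primary colourings of $B_1$ and $B_7$ in detail — to be the main obstacle, the rest being the bookkeeping sketched above.
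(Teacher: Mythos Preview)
Your route diverges from the paper's, and the step you flag as ``the main obstacle'' is a genuine gap, not a routine check.

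The paper never introduces tertiary colourings in this lemma. Its mechanism is much lighter: keep the modified primary colouring $\phi^{u}_{a,b,c}$ of $G_{u}$ intact (so every berry stays primary-coloured) and choose a \emph{global} permutation of the three colour names when gluing to $\phi^{0}$. The proof splits by the type of exit edge (single, double monochromatic, double bichromatic) and in each branch compares $d^{a}_{0}(u_{1})$, $d^{a}_{0}(u_{2})$, $d^{b}_{0}(u_{2})$ against $d^{c}_{u}(u)$, $d^{a}_{u}(u)$, $d^{b}_{u}(u)$; since $d^{c}_{u}(u)\ge4$ already makes $\phi^{u}_{a,b,c}$ a liec of $G_{u}$ and forces $d^{b}_{u}(u)<d^{a}_{u}(u)<d^{c}_{u}(u)$ whenever $u$ is $3$-chromatic, one of at most three sums such as $\phi^{0}_{c,b,a}+\phi^{u}_{a,b,c}$, $\phi^{0}_{b,a,c}+\phi^{u}_{a,b,c}$, $\phi^{0}_{a,b,c}+\phi^{u}_{a,b,c}$ always works, with a single per-berry colour swap in two residual subcases. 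The ``no $B_{2}^{\ast}$'' hypothesis is used only to show $G_{0}$ is colourable, not for the structure of $G_{u}$.

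In your argument two things go wrong beyond the admitted incompleteness. First, $B_{7}$ is an \emph{acyclic} alternative berry with $d_{B_{7}}(u)=1$; its alternative primary colouring already puts the unique edge $uv$ in colour $c$, so it is never an $a$-obstruction at $u$, and the only alternative unicyclic berry without a tertiary colouring is $B_{1}$. Second, $B_{1}$ is a bare triangle, so the move ``choose the pendant even path of a $B_{1}$-berry so that $d^{a}(w_{i})=3$'' is unavailable. Once these are corrected the residual case is not as benign as you suggest: take $G_{u}$ consisting of one $B_{1}$ and three acyclic berries (so $d^{c}_{\mathrm{prim}}(u)=4$), a double bichromatic exit, and $d^{a}_{0}(u_{1})=d^{b}_{0}(u_{2})=2$. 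Then $\alpha=\beta=1$, every one of the six colour-permutations of the $B_{1}$ triangle leaves $u_{1}u$, $u_{2}u$, or $uw_{1}$ locally regular, and none of your listed local moves applies (there is no standard unicyclic berry to switch to a secondary colouring). A fix exists --- recolour an acyclic berry as well --- but that is outside your stated toolkit, and if that berry is $B_{7}$ you must also track that $d^{c}_{G}(u)$ does not drop to $3$. Your approach can likely be completed, but not with the moves you proposed, and the paper's global-permutation argument sidesteps all of this in a few lines.
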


\begin{proof}
Denote by $\phi_{a,b,c}$ the desired $k$-liec of $G.$ If $G_{0}$ is
non-colorable, since $G$ is not a grape, we conclude $G_{0}\in\mathfrak{T}$.
Thus, $G$ would contain an end-grape $B_{2}^{\ast}$ within $G_{0},$ a
contradiction. So, we may assume that $G_{0}$ is colorable.

Now, the first possibility is that $G_{u}$ has a single exit edge. If
$d_{0}^{a}(u_{1})\not =d_{u}^{c}(u)+1$ we have $\phi_{a,b,c}=\phi_{c,b,a}%
^{0}+\phi_{a,b,c}^{u},$ and if $d_{0}^{a}(u_{1})=d_{u}^{c}(u)+1$ then
$d_{u}^{b}(u)<d_{u}^{c}(u)$ implies $\phi_{a,b,c}=\phi_{b,a,c}^{0}%
+\phi_{a,b,c}^{u}$.

The second possibility is that $G_{u}$ has a double monochromatic exit edge.
If $d_{0}^{a}(u_{1})\not =d_{u}^{c}(u)+2$ and $d_{0}^{a}(u_{2})\not =d_{u}%
^{c}(u)+2,$ we have $\phi_{a,b,c}=\phi_{c,b,a}^{0}+\phi_{a,b,c}^{u}.$
Otherwise, if $d_{0}^{a}(u_{1})=d_{u}^{c}(u)+2$ and $d_{0}^{a}(u_{2}%
)\not =d_{u}^{b}(u)+2,$ from $d_{u}^{b}(u)<d_{u}^{c}(u)$ we have $\phi
_{a,b,c}=\phi_{b,a,c}^{0}+\phi_{a,b,c}^{u}.$ Lastly, if $d_{0}^{a}%
(u_{1})=d_{u}^{c}(u)+2$ and $d_{0}^{a}(u_{2})=d_{u}^{b}(u)+2,$ notice that $u$
must be $3$-chromatic by $\phi_{a,b,c}^{u}$ as otherwise $d_{0}^{a}%
(u_{2})=d_{u}^{b}(u)+2=2$ would imply $u_{2}u$ is locally regular by
$\phi_{a,b,c}^{0},$ a contradiction. So, $d_{u}^{b}(u)<d_{u}^{a}(u)<d_{u}%
^{c}(u)$ implies $\phi_{a,b,c}=\phi_{a,b,c}^{0}+\phi_{a,b,c}^{u}.$

The third possibility is that $G_{u}$ has a double bichromatic exit edge. If
$d_{0}^{a}(u_{1})\not =d_{u}^{c}(u)+1$ and $d_{0}^{b}(u_{2})\not =d_{u}%
^{b}(u)+1,$ we have $\phi_{a,b,c}=\phi_{c,b,a}^{0}+\phi_{a,b,c}^{u}.$ If
$d_{0}^{a}(u_{1})\not =d_{u}^{c}(u)+1$ and $d_{0}^{b}(u_{2})=d_{u}^{b}(u)+1,$
notice that $u$ must be $3$-chromatic by $\phi_{a,b,c}^{u},$ as otherwise
$d_{0}^{b}(u_{2})=d_{u}^{b}(u)+1=1$ would imply $u_{2}u$ is locally regular by
$\phi_{a,b,c}^{0},$ so $\phi_{a,b,c}=\phi_{c,a,b}^{0}+\phi_{a,b,c}%
^{\mathrm{{prim}}}.$ Finally, if $d_{0}^{a}(u_{1})=d_{u}^{c}(u)+1$ and
$d_{0}^{b}(u_{2})=d_{u}^{c}(u)+1,$ we distinguish the cases when $u$ is $1$-,
$2$- and $3$-chromatic by $\phi_{a,b,c}^{u}.$ If $u$ is $1$-chromatic by
$\phi_{a,b,c}^{u},$ then $\phi_{a,b,c}=\phi_{a,b,c}^{0}+\phi_{a,b,c}^{u}.$
Otherwise, if $u$ is $2$-chromatic by $\phi_{a,b,c}^{u},$ let $\phi
_{a,b,c}^{\prime u}$ be a coloring of $G_{u}$ obtained from $\phi_{a,b,c}^{u}$
either by swapping colors $a$ and $b$ in one unicyclic berry $G_{i}$ with
$d_{u}^{a}(w_{i})=4$ or by swapping colors $a$ and $c$ and then $a$ and $b$ in
one unicyclic berry $G_{i}$ with $d_{u}^{a}(w_{i})=2$, then $\phi_{a,b,c}%
=\phi_{a,b,c}^{0}+\phi_{a,b,c}^{\prime u}.$ Lastly, if $u$ is $3$-chromatic by
$\phi_{a,b,c}^{u},$ let $\phi_{a,b,c}^{\prime u}$ be a coloring of $G_{u}$
obtained from $\phi_{a,b,c}^{u}$ by swapping colors $a$ and $b$ in one
alternative unicyclic berry $G_{i}$ with $\phi_{a,b,c}^{u}(uw_{i})=a,$ then
$\phi_{a,b,c}=\phi_{a,b,c}^{0}+\phi_{a,b,c}^{\prime u}.$
\end{proof}

\begin{lemma}
\label{Lemma_reduction_deg3}Let $G$ be a cactus graph with $\mathfrak{c}\geq2$
cycles which is not a grape and does not contain an end-grape $B_{2}^{\ast}$.
If $G$ contains an end-grape $G_{u}\not =A_{5},A_{6}$ with $d_{\mathrm{{prim}%
}}^{c}(u)=3,$ then every $k$-liec of $G_{0}$ for $k\geq3$, can be extended to
a $k$-liec of $G$ so that every berry of $G_{u}$ is colored by a primary,
secondary and tertiary coloring.
\end{lemma}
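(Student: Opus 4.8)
The setup mirrors the proof of Lemma~\ref{Lemma_reduction_deg4}, so I would keep the same notation and reuse the same case analysis skeleton: given a $k$-liec $\phi_{a,b,c}^{0}$ of $G_{0}$ with $\phi_{a,b,c}^{0}(u_{1}u)=a$ and $\phi_{a,b,c}^{0}(u_{2}u)\in\{a,b\}$, I want to produce an appropriate modified primary coloring $\phi_{a,b,c}^{u}$ of $G_{u}$ — or, when that fails, a coloring obtained from it by swapping colours within individual berries, possibly replacing one berry's primary coloring by a tertiary one — so that the sum $\phi^{0}+\phi^{u}$ (after a suitable permutation of colours on $G_{0}$) is a $k$-liec of $G$. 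First I would dispose of the case where $G_{0}$ is non-colorable: since $G$ is not a grape and contains no end-grape $B_{2}^{\ast}$, $G_{0}\notin\mathfrak{T}$, so $G_{0}$ is colorable, exactly as in Lemma~\ref{Lemma_reduction_deg4}.

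The core is then a case split on the type of exit edge of $G_{u}$ (single, double monochromatic, double bichromatic), and within each, on the colours that $\phi_{a,b,c}^{0}$ forces on the exit edges together with the value of $d_{\mathrm{prim}}^{c}(u)=3$. The key quantitative fact is the one recorded just before the proof of Proposition~\ref{Prop_grapes2}: if $d_{u}^{c}(u)\ge 3$ and $B_{7}\notin G_{u}$, then $\phi_{a,b,c}^{u}$ is already a liec of $G_{u}$, and at worst $u$ is $3$-chromatic with $d_{u}^{a}(u)=3$, $d_{u}^{b}(u)=1$. So the only way a naive sum can fail is that some exit edge at $u$ becomes locally regular against $G_{0}$ (a colour-degree collision at $u$ between the two sides), or that $B_{7}\in G_{u}$. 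For the collision, the standard trick — available precisely because $d_{u}^{c}(u)=3$ while the competing colour degrees at $u$ inside $G_{u}$ are small — is to apply a colour permutation to $\phi^{0}$ (the $\phi_{c,b,a}^{0}$, $\phi_{b,a,c}^{0}$, $\phi_{c,a,b}^{0}$ variants already used in Lemma~\ref{Lemma_reduction_deg4}) so the edge $uu_{1}$ (resp.\ $uu_{2}$) lands on $c$ and then $d_{G}^{c}(u)\ge 4$, making every $c$-edge at $u$ locally irregular; one checks that for at least one such permutation no new collision is created, using that the two relevant colour degrees of $u$ in $G_{0}$ cannot simultaneously equal the two forbidden values.

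The exclusion of $A_{5},A_{6}$ is what makes the $B_{7}$ subcase tractable: if $B_{7}\in G_{u}$ and $d_{\mathrm{prim}}^{c}(u)=3$, the structural constraint $\mathfrak{c}\ge 2$ forces $G_{u}$ to have very few berries — essentially two unicyclic berries and possibly one acyclic berry — and since $G_{u}\ne A_{5},A_{6}$, the second unicyclic berry is not one of the rigid singular end-grapes, so it is an alternative berry distinct from $B_{1}$ and hence (Proposition~\ref{Prop_tertiaryColoring}) admits a tertiary coloring. Replacing that berry's primary coloring by a tertiary one pushes $d^{c}(w)\in\{3,4\}$ at the new neighbour $w$, which repairs the lone $c$-edge at $u$ that $B_{7}$ would otherwise leave locally regular; this is exactly the manoeuvre performed in the $d_{\mathrm{prim}}^{c}(u)=3$ branch of the proof of Proposition~\ref{Prop_grapes2}, and I would adapt it verbatim, then recombine with $\phi^{0}$ via the appropriate colour permutation.

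**Main obstacle.** I expect the bookkeeping in the double-bichromatic-exit case to be the hardest part: there the coloring $\phi^{0}$ constrains the colours of \emph{both} $uu_{1}$ and $uu_{2}$, so a single colour permutation of $G_{0}$ need not simultaneously avoid a collision at $u$ on both sides, and one may be forced instead to modify $\phi^{u}$ (swapping $a\leftrightarrow b$ or $a\leftrightarrow c$ inside one alternative unicyclic berry to change $d_{u}^{a}(u)$ or $d_{u}^{b}(u)$), then re-examine whether the interior edges of that berry stay locally irregular and whether $u$'s colour degrees now fit against $G_{0}$. Combined with the interaction with a possible $B_{7}$ — where such swaps are restricted by Observation~\ref{Obs_B7} — this produces a moderately large but entirely routine case tree, which I would organize by first fixing the permutation of $G_{0}$ that lands $uu_{1}$ on $c$, then treating the residual value $\phi^{0}(u_{2}u)\in\{a,b,c\}$ together with the sub-case $B_{7}\in G_{u}$ versus $B_{7}\notin G_{u}$.
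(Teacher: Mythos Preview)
Your overall strategy matches the paper's: permute colours on $G_{0}$, apply the modified primary colouring $\phi^{u}$ on $G_{u}$, and patch by swapping colours in individual berries or substituting a tertiary colouring when $B_{7}$ obstructs the direct route. Organising by exit-edge type rather than by berry structure is a legitimate alternative.

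The gap is in your structural analysis of the case $B_{7}\in G_{u}$. You import the count from the proof of Proposition~\ref{Prop_grapes2}, where $G$ itself is a grape with $\mathfrak{c}\ge 2$ cycles, so $p+q\ge 2$; combined with $r\ge 1$ and $p+2q+r=3$ this forces $p=2$, $r=1$. But here $G_{u}$ is only an \emph{end}-grape; it need only contain one cycle, so the constraint is just $p+q\ge 1$. With $B_{7}\in G_{u}$ the possible berry structures are $(q,r)=(1,1)$, $(p,r)=(2,1)$, and $(p,r)=(1,2)$ --- the paper's Cases~2, 4, 5. Your plan ``replace the second unicyclic berry's primary colouring by a tertiary one'' breaks in Case~2: the unique unicyclic berry $G_{1}$ is \emph{standard}, and standard berries have no tertiary colouring (Proposition~\ref{Prop_tertiaryColoring} covers only $B_{2},\dots,B_{6}$). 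The paper instead invokes a \emph{secondary} colouring of $G_{1}$ (Proposition~\ref{Prop_secondaryColoring2}\,/\,Lemma~\ref{Lemma_secondaryClosure}) in the critical subcase $d_{0}^{a}(u_{1})=5$, $d_{0}^{a}(u_{2})=4$. You also miss that in Case~5 both acyclic berries can equal $B_{7}$, where the fix is colour-swapping inside the $B_{7}$'s themselves, not a tertiary substitution. The exclusions $G_{u}\ne A_{5},A_{6}$ do not work as you describe; they only eliminate two specific dead-ends (namely $G_{1}=B_{1}$ in one subcase each of Cases~4 and~5), not guarantee a tertiary-colourable second unicyclic berry.
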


\begin{proof}
Denote by $\phi_{a,b,c}$ the desired $k$-liec of $G.$ First, notice that
$d_{\mathrm{{prim}}}^{c}(u)=d_{u}^{c}(u)=3$ implies $u$ is $1$- or
$2$-chromatic by $\phi_{a,b,c}^{u},$ i.e. $\phi_{a,b,c}^{u}(u)\subseteq
\{a,c\}.$ Similarly as in previous lemma, we may assume $G_{0}$ is colorable.
Now, we distinguish the following cases.

\medskip\noindent\textbf{Case 1: }$p=1$ and $q=1.$ In this case $d_{u}%
^{a}(u)=1.$ Also, $r=0$ implies that $G_{u}$ does not contain $B_{7},$ so
$\phi_{a,b,c}^{u}$ is a liec of $G_{u}.$ If $u$ is $1$-chromatic by
$\phi_{a,b,c}^{0}$ we have $\phi_{a,b,c}=\phi_{b,a,c}^{0}+\phi_{a,b,c}^{u}.$
If $u$ is $2$-chromatic by $\phi_{a,b,c}^{0},$ then if $d_{0}^{a}(u_{1}%
)\not =4$ we have $\phi_{a,b,c}=\phi_{c,b,a}^{0}+\phi_{a,b,c}^{u},$ else if
$d_{0}^{a}(u_{1})=d_{0}^{b}(u_{2})=4$ then in case $d_{u}^{a}(w_{1})=4$ we
have $\phi_{a,b,c}=\phi_{a,b,c}^{0}+\phi_{a,b,c}^{u}$ and in case $d_{u}%
^{a}(w_{1})=2$ the fourth property of an alternative primary coloring implies
$d_{u}^{c}(v_{1})=1,$ so we have $\phi_{a,b,c}=\phi_{a,b,c}^{0}+\phi
_{c,b,a}^{1,\mathrm{{prim}}}+\phi_{a,b,c}^{2,\mathrm{{prim}}}$.

\medskip\noindent\textbf{Case 2: }$q=1$ and $r=1.$ In this case $\phi
_{a,b,c}^{u}(u)=\{c\}$, i.e. $d_{u}^{a}(u)=0.$ If $G_{2}\not =B_{7},$ then
$\phi_{a,b,c}^{u}$ is a liec of $G_{u},$ so $\phi_{a,b,c}=\phi_{a,b,c}%
^{0}+\phi_{a,b,c}^{u}.$ Hence, we may assume $G_{2}=B_{7}.$ The first
possibility is that $G_{u}$ has a single exit edge. If $d_{0}^{a}(u_{1}%
)\not =4$ we have $\phi_{a,b,c}=\phi_{c,b,a}^{0}+\phi_{a,b,c}^{u}$, and if
$d_{0}^{a}(u_{1})=4$ we have $\phi_{a,b,c}=\phi_{c,b,a}^{0}+\phi
_{a,b,c}^{1,\mathrm{{prim}}}+\phi_{a,c,b}^{2,\mathrm{{prim}}}.$ The second
possibility is that $G_{u}$ has double monochromatic exit edge. Here, if
$d_{0}^{a}(u_{1})\not =5$ and $d_{0}^{a}(u_{2})\not =5$ we have $\phi
_{a,b,c}=\phi_{c,b,a}^{0}+\phi_{a,b,c}^{u}.$ Otherwise, if $d_{0}^{a}%
(u_{1})=5$ and $d_{0}^{a}(u_{2})\not =4$ we have $\phi_{a,b,c}=\phi
_{c,b,a}^{0}+\phi_{a,b,c}^{1,\mathrm{{prim}}}+\phi_{c,b,a}^{2,\mathrm{{prim}}%
}.$ Finally, if $d_{0}^{a}(u_{1})=5$ and $d_{0}^{a}(u_{2})=4,$ then if
$\phi_{a,b,c}^{1,\mathrm{{prim}}}$ is a liec of $G_{1}$ (resp.
$G^{\mathrm{{cl}}}$) we have $\phi_{a,b,c}=\phi_{a,b,c}^{0}+\phi
_{c,b,a}^{1,\mathrm{{prim}}}+\phi_{a,c,b}^{2,\mathrm{{prim}}}$. If $G$ (resp.
$G^{\mathrm{{cl}}}$) does not admit a primary liec, then according to
Proposition \ref{Prop_secondaryColoring2} (resp. Lemma
\ref{Lemma_secondaryClosure}) it admits a secondary coloring and we have
$\phi_{a,b,c}=\phi_{c,b,a}^{0}+\phi_{a,b,c}^{1,\mathrm{{sec}}}+\phi
_{a,c,b}^{2,\mathrm{{prim}}}.$ The third and the last possibility is that
$G_{u}$ has double bichromatic exit edge. If $d_{0}^{a}(u_{1})\not =4$ we have
$\phi_{a,b,c}=\phi_{c,b,a}^{0}+\phi_{a,b,c}^{u}$, and if $d_{0}^{a}%
(u_{1})=d_{0}^{b}(u_{2})=4$ we have $\phi_{a,b,c}=\phi_{c,b,a}^{0}%
+\phi_{a,b,c}^{1,\mathrm{{prim}}}+\phi_{c,b,a}^{2,\mathrm{{prim}}}.$

\medskip\noindent\textbf{Case 3: }$p=3.$ In this case $d_{u}^{a}(u)=3.$ Since
$B_{7}\not \in G_{u},$ then $\phi_{a,b,c}^{u}$ is a liec of $G_{u}.$ If $u$ is
monochromatic by $\phi_{a,b,c}^{0}$ we have $\phi_{a,b,c}=\phi_{b,a,c}%
^{0}+\phi_{a,b,c}^{u},$ so we may assume that $u$ is bichromatic by
$\phi_{a,b,c}^{0}.$ If $d_{0}^{a}(u_{1})\not =4$ we have $\phi_{a,b,c}%
=\phi_{c,b,a}^{0}+\phi_{a,b,c}^{u},$ and if $d_{0}^{a}(u_{1})=d_{0}^{b}%
(u_{2})=4$ we distinguish cases when $d_{\mathrm{{prim}}}^{a}(w_{1})=2$ and
$d_{\mathrm{{prim}}}^{a}(w_{1})=4.$ If $d_{\mathrm{{prim}}}^{a}(w_{1})=2,$
then the fourth property of an alternative primary coloring implies
$d_{\mathrm{{prim}}}^{c}(v_{1})=1,$ so we have $\phi_{a,b,c}=\phi_{a,b,c}%
^{0}+\phi_{c,a,b}^{1,\mathrm{{prim}}}+\sum_{i=2}^{3}\phi_{a,b,c}%
^{i,\mathrm{{prim}}}$, and if $d_{\mathrm{{prim}}}^{a}(w_{1})=4$ we have
$\phi_{a,b,c}=\phi_{a,b,c}^{0}+\phi_{b,a,c}^{1,\mathrm{{prim}}}+\sum_{i=2}%
^{3}\phi_{a,b,c}^{i,\mathrm{{prim}}}$.

\medskip\noindent\textbf{Case 4: }$p=2$ and $r=1.$ Notice that in this case
$d_{u}^{a}(u)=2.$ Assume first $G_{3}\not =B_{7},$ so $\phi_{a,b,c}^{u}$ is a
liec of $G_{u}$. If $u$ is monochromatic by $\phi_{a,b,c}^{0}$ we have
$\phi_{a,b,c}=\phi_{b,a,c}^{0}+\phi_{a,b,c}^{u},$ so we may assume $u$ is
bichromatic by $\phi_{a,b,c}^{0}.$ If $d_{0}^{a}(u_{1})\not =4$ we have
$\phi_{a,b,c}=\phi_{c,b,a}^{0}+\phi_{a,b,c}^{u}$, and if $d_{0}^{a}%
(u_{1})=d_{0}^{b}(u_{2})=4$ we distinguish cases when $d_{\mathrm{{prim}}}%
^{c}(v_{1})=1$ and $d_{\mathrm{{prim}}}^{c}(v_{1})=2.$ If $d_{\mathrm{{prim}}%
}^{c}(v_{1})=1$ we have $\phi_{a,b,c}=\phi_{a,c,b}^{0}+\phi_{b,c,a}%
^{1,\mathrm{{prim}}}+\sum_{i=2}^{3}\phi_{a,b,c}^{i,\mathrm{{prim}}},$ and if
$d_{\mathrm{{prim}}}^{c}(v_{1})=2$, then $d_{\mathrm{{prim}}}^{a}(w_{1})=4,$
so we have $\phi_{a,b,c}=\phi_{a,c,b}^{0}+\phi_{a,c,b}^{1,\mathrm{{prim}}%
}+\sum_{i=2}^{3}\phi_{a,b,c}^{i,\mathrm{{prim}}}.$

Assume now that $G_{3}=B_{7}.$ The first possibility is that $G_{u}$ has a
single exit edge. Here, if $d_{0}^{a}(u_{1})\not =4$ we have $\phi
_{a,b,c}=\phi_{c,a,b}^{0}+\phi_{a,b,c}^{u},$ and if $d_{0}^{a}(u_{1})=4$ we
have $\phi_{a,b,c}=\phi_{c,a,b}^{0}+\phi_{a,b,c}^{\prime u},$ where
$\phi_{a,b,c}^{\prime u}$ is obtained from $\phi_{a,b,c}^{u}$ by swapping
colors $b$ and $c$ in $G_{3}.$ The second possibility is that $G_{u}$ has
double monochromatic exit edge. If $d_{0}^{a}(u_{1})\not =5$ and $d_{0}%
^{a}(u_{2})\not =5$ we have $\phi_{a,b,c}=\phi_{c,a,b}^{0}+\phi_{a,b,c}^{u},$
and if $d_{0}^{a}(u_{1})=5$ and $d_{0}^{a}(u_{2})\not =4$ we have
$\phi_{a,b,c}=\phi_{c,a,b}^{0}+\phi_{a,b,c}^{\prime u},$ where $\phi
_{a,b,c}^{\prime u}$ is obtained from $\phi_{a,b,c}^{u}$ by swapping colors
$b$ and $c$ in $G_{3}.$ Finally, if $d_{0}^{a}(u_{1})=5$ and $d_{0}^{a}%
(u_{2})=4,$ then $G_{u}\not =A_{5}$ implies $G_{1}\not =B_{1},$ so we have
$\phi_{a,b,c}=\phi_{c,a,b}^{0}+\phi_{a,b,c}^{1,\mathrm{{tert}}}+\sum_{i=2}%
^{3}\phi_{a,b,c}^{i,\mathrm{{prim}}}$.

The third and the last possibility is that $G_{u}$ has double bichromatic exit
edge. Here, if $d_{0}^{a}(u_{1})\not =4$ we have $\phi_{a,b,c}=\phi
_{c,b,a}^{0}+\phi_{a,b,c}^{u},$ and if $d_{0}^{a}(u_{1})=d_{0}^{b}(u_{2})=4$
we have $\phi_{a,b,c}=\phi_{c,b,a}^{0}+\phi_{a,b,c}^{\prime u},$ where
$\phi_{a,b,c}^{\prime u}$ is obtained from $\phi_{a,b,c}^{u}$ by swapping
colors $b$ and $c$ in $G_{3}.$

\medskip\noindent\textbf{Case 5: }$p=1$ and $r=2.$ In this case $d_{u}%
^{a}(u)=1.$ Assume first that $G_{2}\not =B_{7}$ and $G_{3}\not =B_{7},$ so
$\phi_{a,b,c}^{u}$ is a liec of $G_{u}.$ If $u$ is monochromatic by
$\phi_{a,b,c}^{0},$ then $\phi_{a,b,c}=\phi_{b,a,c}^{0}+\phi_{a,b,c}^{u},$ so
we may assume that $u$ is bichromatic by $\phi_{a,b,c}^{0}.$ If $d_{0}%
^{a}(u_{1})\not =4$ we have $\phi_{a,b,c}=\phi_{c,b,a}^{0}+\phi_{a,b,c}^{u}$,
and if $d_{0}^{a}(u_{1})=d_{0}^{b}(u_{2})=4$ we distinguish cases when
$G_{1}=B_{1}$ and $G_{1}\not =B_{1}.$ If $G_{1}=B_{1}$ we have $\phi
_{a,b,c}=\phi_{c,b,a}^{0}+\phi_{a,c,b}^{1,\mathrm{{prim}}}+\sum_{i=2}^{3}%
\phi_{a,b,c}^{i,\mathrm{{prim}}},$ and if $G_{1}\not =B_{1}$ we have
$\phi_{a,b,c}=\phi_{c,b,a}^{0}+\phi_{a,b,c}^{1,\mathrm{{tert}}}+\sum_{i=2}%
^{3}\phi_{a,b,c}^{i,\mathrm{{prim}}}.$

Assume now that $G_{2}\not =B_{7}$ and $G_{3}=B_{7},$ so $B_{7}\in G_{u}$
implies $\phi_{a,b,c}^{u}$ is not a liec. The first possibility is that
$G_{u}$ has a single exit edge. If $d_{0}^{a}(u_{1})\not =4$ we have
$\phi_{a,b,c}=\phi_{c,b,a}^{0}+\phi_{a,b,c}^{u}$, and if $d_{0}^{a}(u_{1})=4$
we have $\phi_{a,b,c}=\phi_{c,b,a}^{0}+\phi_{a,b,c}^{\prime u},$ where
$\phi_{a,b,c}^{\prime u}$ is obtained from $\phi_{a,b,c}^{u}$ by swapping
colors $b$ and $c$ in $G_{3}.$ The next possibility is that $G_{u}$ has double
monochromatic exit edge. If $d_{0}^{a}(u_{1})\not =5$ and $d_{0}^{a}%
(u_{2})\not =5,$ we have $\phi_{a,b,c}=\phi_{c,b,a}^{0}+\phi_{a,b,c}^{u}.$
Otherwise, if $d_{0}^{a}(u_{1})=5$ and $d_{0}^{a}(u_{2})\not =4,$ we have
$\phi_{a,b,c}=\phi_{c,b,a}^{0}+\phi_{a,b,c}^{\prime u}$ where $\phi
_{a,b,c}^{\prime u}$ is obtained from $\phi_{a,b,c}^{u}$ by swapping colors
$b$ and $c$ in $G_{3}.$ Finally, if $d_{0}^{a}(u_{1})=5$ and $d_{0}^{a}%
(u_{2})=4,$ then we distinguish cases whether $G_{1}$ is $B_{1}$ or not. If
$G_{1}=B_{1}$ then $\phi_{a,b,c}=\phi_{c,b,a}^{0}+\phi_{a,c,b}%
^{1,\mathrm{{prim}}}+\phi_{a,b,c}^{2,\mathrm{{prim}}}+\phi_{a,c,b}%
^{3,\mathrm{{prim}}},$ and if $G_{1}\not =B_{1}$ then $\phi_{a,b,c}%
=\phi_{c,b,a}^{0}+\phi_{a,b,c}^{1,\mathrm{{tert}}}+\sum_{i=2}^{3}\phi
_{a,b,c}^{i,\mathrm{{prim}}}$. The last possibility is that $G_{u}$ has double
bichromatic exit edge. If $d_{0}^{a}(u_{1})\not =4$ we have $\phi_{a,b,c}%
=\phi_{c,b,a}^{0}+\phi_{a,b,c}^{u},$ and if $d_{0}^{a}(u_{1})=d_{0}^{b}%
(u_{2})=4$ we have $\phi_{a,b,c}=\phi_{c,b,a}^{0}+\phi_{a,b,c}^{\prime u}$
where $\phi_{a,b,c}^{\prime u}$ is obtained from $\phi_{a,b,c}^{u}$ by
swapping colors $b$ and $c$ in $G_{3}.$

Assume lastly that $G_{2}=B_{7}$ and $G_{3}=B_{7},$ so again $\phi_{a,b,c}%
^{u}$ is not a liec of $G_{u}.$ The first possibility is that $G_{u}$ has the
single exit edge, in which case if $d_{0}^{a}(u_{1})\not =4$ we have
$\phi_{a,b,c}=\phi_{c,b,a}^{0}+\phi_{a,b,c}^{u}$, and if $d_{0}^{a}(u_{1})=4$
we distinguish cases when $G_{1}=B_{1}$ and $G_{1}\not =B_{1}.$ If
$G_{1}=B_{1}$ we have $\phi_{a,b,c}=\phi_{c,b,a}^{0}+\phi_{a,b,c}%
^{1,\mathrm{{prim}}}+\sum_{i=2}^{3}\phi_{a,c,b}^{i,\mathrm{{prim}}},$ and if
$G_{1}\not =B_{1}$ we have $\phi_{a,b,c}=\phi_{a,b,c}^{0}+\phi_{a,b,c}%
^{1,\mathrm{{tert}}}+\sum_{i=2}^{3}\phi_{a,b,c}^{i,\mathrm{{prim}}}.$ The next
possibility is that $G_{u}$ has double monochromatic exit edge. Here, if
$d_{0}^{a}(u_{1})\not =5$ and $d_{0}^{a}(u_{2})\not =5$ we have $\phi
_{a,b,c}=\phi_{c,b,a}^{0}+\phi_{a,b,c}^{u},$ if $d_{0}^{a}(u_{1})=5$ and
$d_{0}^{a}(u_{2})\not =4$ we have $\phi_{a,b,c}=\phi_{c,b,a}^{0}+\sum
_{i=1}^{2}\phi_{a,b,c}^{i,\mathrm{{prim}}}+\phi_{a,c,b}^{3,\mathrm{{prim}}},$
and if $d_{0}^{a}(u_{1})=5$ and $d_{0}^{a}(u_{2})=4$ we have $\phi
_{a,b,c}=\phi_{c,b,a}^{0}+\phi_{a,b,c}^{1,\mathrm{{prim}}}+\sum_{i=2}^{3}%
\phi_{a,c,b}^{i,\mathrm{{prim}}}.$ The last possibility is that $G_{u}$ has
double bichromatic exit edge. Here, if $d_{0}^{a}(u_{1})\not =4$ we have
$\phi_{a,b,c}=\phi_{c,b,a}^{0}+\phi_{a,b,c}^{u},$ and if $d_{0}^{a}%
(u_{1})=d_{0}^{b}(u_{2})=4,$ since $G_{u}\not =A_{6}$ implies $G_{1}%
\not =B_{1}$ we have $\phi_{a,b,c}=\phi_{c,b,a}^{0}+\phi_{a,b,c}%
^{1,\mathrm{{tert}}}+\sum_{i=2}^{3}\phi_{a,b,c}^{i,\mathrm{{prim}}}$.
\end{proof}

\begin{lemma}
\label{Lemma_reduction_deg2}Let $G$ be a cactus graph with $\mathfrak{c}\geq2$
cycles which is not a grape and does not contain an end-grape $B_{2}^{\ast}$.
If $G$ contains an end-grape $G_{u}\not =A_{i}$ for $i=1,2,3$ with
$d_{\mathrm{{prim}}}^{c}(u)=2,$ then there exists $\tilde{G}_{0}\in
\{G_{0},G_{0}^{\prime}\}$ such that every $k$-liec of $\tilde{G}_{0}$ for
$k\geq3$ can be extended to a $k$-liec of $G$ so that every berry of $G_{u}$
is colored by a primary, secondary and tertiary coloring.
\end{lemma}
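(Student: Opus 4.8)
The plan is to follow the proofs of Lemmas~\ref{Lemma_reduction_deg4} and~\ref{Lemma_reduction_deg3}, the only new feature being that when no suitable permutation of a $k$-liec of $G_{0}$ exists we pass to $G_{0}^{\prime}$. First, exactly as in those two lemmas, if $G_{0}$ were non-colorable then $G$ not being a grape would give $G_{0}\in\mathfrak{T}$ and hence an end-grape $B_{2}^{\ast}$ inside $G_{0}$, contradicting the hypothesis; so $G_{0}$ is colorable, and since $v_{1}\notin V(G_{0})$ is merely a new leaf at $u$, the graph $G_{0}^{\prime}=G_{0}+uv_{1}$ is colorable as well (color $uv_{1}$ by a color already incident to $u$).

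Next I would pin down the possible shapes of $G_{u}$. A standard unicyclic berry contributes $2$ to $d_{\mathrm{prim}}^{c}(u)$, whereas an alternative unicyclic berry and an acyclic berry each contribute exactly $1$; as $G_{u}$ contains at least one cycle, $d_{\mathrm{prim}}^{c}(u)=2$ forces $(p,q,r)\in\{(0,1,0),(2,0,0),(1,0,1)\}$: either a single standard unicyclic berry, two alternative unicyclic berries, or one alternative unicyclic berry together with one acyclic berry. In the last two shapes $\phi_{a,b,c}^{u}(u)=\{a,c\}$ with $d_{u}^{c}(u)=2$, and in the first shape $u$ is monochromatic in $c$ in the primary coloring of the berry; in particular $u$ is never $3$-chromatic by $\phi_{a,b,c}^{u}$.

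The bulk of the proof is a case analysis: for each of the three shapes, split on the exit configuration of $G_{u}$ (single exit edge, double monochromatic exit edge, double bichromatic exit edge) and on the color profile of $u$ in the given liec $\phi_{a,b,c}^{0}$ of $\tilde{G}_{0}$ (keeping the normalization $\phi^{0}(u_{1}u)=a$, $\phi^{0}(u_{2}u)\in\{a,b\}$). In the generic sub-cases I keep $\tilde{G}_{0}=G_{0}$ and write $\phi_{a,b,c}=\psi^{0}+\phi_{a,b,c}^{u}$ for a color permutation $\psi^{0}$ of $\phi_{a,b,c}^{0}$ chosen so that the color of each exit edge at $u$ does not collide with the $c$-, $a$- or $b$-degree it meets on the $G_{u}$ side; the few edges incident to $u$ that such a permutation leaves locally regular are then repaired, exactly as in Lemma~\ref{Lemma_reduction_deg3}, either by swapping colors inside one berry ($a\leftrightarrow b$, or $a\leftrightarrow c$ followed by $a\leftrightarrow b$, in an alternative unicyclic berry), or by replacing the primary coloring of the offending berry by a tertiary coloring (Proposition~\ref{Prop_tertiaryColoring}, available precisely because $G_{u}\neq A_{1},A_{2},A_{3}$) or by a secondary coloring (Proposition~\ref{Prop_secondaryColoring2}, Lemma~\ref{Lemma_secondaryClosure}). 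Since $d_{u}^{c}(u)=2$ the only critical collisions are $d_{0}^{a}(u_{1}),d_{0}^{a}(u_{2})\in\{2,3\}$ and the analogous $d_{0}^{b}(u_{2})$, so the list of cases is shorter than in the $d_{\mathrm{prim}}^{c}(u)\geq3$ lemmas.

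The genuinely new point, and the step I expect to be the main obstacle, is the handful of sub-cases in which no permutation of any $k$-liec of $G_{0}$ can work, because such a liec may put $c$-degree $2$ simultaneously at $u$ and at $u_{1}$ while the exit edge is $c$-colored; this happens in shape $(0,1,0)$ when the primary coloring of the berry is not a liec, and in the sub-cases of shapes $(2,0,0)$ and $(1,0,1)$ carrying a $B_{7}$ berry. For these I switch to $\tilde{G}_{0}=G_{0}^{\prime}$: in a $k$-liec of $G_{0}^{\prime}$ the colors of $uv_{1}$ and, in shape $(0,1,0)$, of $uw_{1}$ are already prescribed, and $G_{u}-uv_{1}$ is a tree hanging at $v_{1}$ and at $u$, which I color so that all its edges are locally irregular, invoking the secondary coloring of the standard berry (Proposition~\ref{Prop_secondaryColoring2}), or of its closure (Lemma~\ref{Lemma_secondaryClosure}), precisely to force $d^{c}(v_{1})\neq2$ and thereby keep the prescribed edge $uv_{1}$ locally irregular; for a $B_{7}$ berry this role is played by its alternative primary coloring together with Observation~\ref{Obs_B7}. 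The real work is the bookkeeping: for every triple (shape, exit type, color profile at $u$) one must exhibit one of $G_{0},G_{0}^{\prime}$ that works for all its liecs, and in the $G_{0}^{\prime}$-cases check that the colors prescribed on the exit edge(s) and on $uv_{1}$ are simultaneously compatible with a primary, secondary or tertiary coloring of every berry of $G_{u}$.
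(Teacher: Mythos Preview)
Your plan follows the paper's approach closely—same three-shape decomposition, same strategy of permuting $\phi_{a,b,c}^{0}$ and repairing conflicts at $u$ with swaps or secondary/tertiary berry colorings—but two concrete errors need correcting.

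First, your one-line argument that $G_{0}^{\prime}$ is colorable (``color $uv_{1}$ by a color already incident to $u$'') is wrong: if, say, $d_{G_{0}}^{a}(u)=1$ and $d_{G_{0}}^{a}(u_{1})=2$, then giving $uv_{1}$ color $a$ raises $d^{a}(u)$ to $2$ and makes $uu_{1}$ locally regular; using a fresh color leaves $uv_{1}$ isolated in its color and hence locally regular. The paper's argument is structural: if $G_{0}^{\prime}$ were non-colorable then, having the leaf $v_{1}$, it would lie in $\mathfrak{T}$, forcing a $B_{2}^{\ast}$ inside $G$, contrary to hypothesis.

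Second, $B_{7}$ is an \emph{acyclic} berry, so it cannot occur in shape $(p,q,r)=(2,0,0)$; that worry is vacuous. More importantly, in the paper $\tilde{G}_{0}=G_{0}^{\prime}$ is used in exactly one place: shape $(0,1,0)$ with primary coloring not a liec \emph{and} $d_{\mathrm{sec}}^{c}(v_{1})=1$. All of shape $(2,0,0)$ and all of shape $(1,0,1)$, including the sub-cases with $G_{2}=B_{7}$, are handled with $\tilde{G}_{0}=G_{0}$ alone, by combining color permutations of $\phi_{a,b,c}^{0}$ with a tertiary coloring of $G_{1}$ (available precisely because $G_{u}\neq A_{1},A_{3},A_{4}$ forces $G_{1}\neq B_{1}$ in the relevant sub-cases) or with a secondary coloring of the acyclic berry. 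Your proposal to pass to $G_{0}^{\prime}$ in those sub-cases is neither needed nor verified, and your remark that the critical collisions lie in $\{2,3\}$ overlooks the double-monochromatic exit, where the collision is at $d_{u}^{c}(u)+2=4$ (and then at $5$). The bookkeeping you anticipate is real, but the fix at each step is a tertiary coloring or a single berry swap rather than a change of $\tilde{G}_{0}$.
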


\begin{proof}
Let $\phi_{a,b,c}$ denote the desired $k$-liec of $G.$ As in previous two
lemmas we may assume $G_{0}$ is colorable.

\medskip\noindent\textbf{Case 1: }$q=1.$ If $G_{1}$ (resp. $G_{1}%
^{\mathrm{{cl}}}$) admits a primary coloring which is also a liec, then we
have $\phi_{a,b,c}=\phi_{a,b,c}^{0}+\phi_{a,b,c}^{1,\mathrm{{prim}}}$. So, let
us assume $G_{1}$ (resp. $G_{1}^{\mathrm{{cl}}}$) does not admit a primary
coloring which is also a liec, then Proposition \ref{Prop_secondaryColoring2}
(resp. Lemma \ref{Lemma_secondaryClosure}) implies it admits a secondary
coloring $\phi_{a,b,c}^{1,\mathrm{{sec}}}.$ Now, we distinguish the cases when
$d_{\mathrm{{sec}}}^{c}(v_{1})=1$ and $d_{\mathrm{{sec}}}^{c}(v_{1})=2.$

Assume first that $d_{\mathrm{{sec}}}^{c}(v_{1})=1$ and let $G_{0}^{\prime
}=G_{0}+uv_{1}$. If $G_{0}^{\prime}$ is non-colorable, then the existence of
the leaf $v_{1}$ in $G_{0}^{\prime}$ implies $G_{0}^{\prime}\in\mathfrak{T},$
so $G$ would contain $B_{2}^{\ast}$ within $G_{0}^{\prime},$ a contradiction.
If $G_{0}^{\prime}$ is colorable, let $\phi_{a,b,c}^{\prime0}$ be a $k$-liec
of $G_{0}^{\prime}.$ We claim that $u$ is $1$- or $2$-chromatic by
$\phi_{a,b,c}^{\prime0}.$ To see this, notice that either $d_{G_{0}^{\prime}%
}(u)=2$ or $uv_{1}$ is a single pendant edge on a cycle of $G_{0}^{\prime}.$
Now, if $d_{G_{0}^{\prime}}(u)=2$, the claim is obvious. On the other hand, if
$uv_{1}$ is a pendant edge on a cycle of $G_{0}^{\prime},$ then the color of
$uv_{1}$ must be the same by $\phi_{a,b,c}^{\prime0}$ as the color of at least
one more edge of $G_{0}^{\prime}$ incident to $u$ which establishes the claim.
So, we may assume $\phi_{a,b,c}^{\prime0}(uv_{1})=a$ and $\phi_{a,b,c}%
^{\prime0}(u)\subseteq\{a,b\}.$ Let $\phi_{a,b,c}^{0}$ be the restriction of
$\phi_{a,b,c}^{\prime0}$ to $G_{0},$ then we have $\phi_{a,b,c}=\phi
_{c,b,a}^{0}+\phi_{a,b,c}^{1,\mathrm{{sec}}}.$

Assume now that $d_{\mathrm{{sec}}}^{c}(v_{1})=2,$ which implies $\phi
_{a,b,c}^{1,\mathrm{{sec}}}$ is a liec and $d_{\mathrm{{sec}}}^{a}(w_{1}%
)\geq3$. If $u$ is monochromatic by $\phi_{a,b,c}^{0}$ we have $\phi
_{a,b,c}=\phi_{b,a,c}^{0}+\phi_{a,b,c}^{1,\mathrm{{sec}}}$, so we may assume
$u$ is bichromatic by $\phi_{a,b,c}^{0}.$ Now, if $d_{0}^{a}(u_{1})\not =2$ we
have $\phi_{a,b,c}=\phi_{a,b,c}^{0}+\phi_{a,b,c}^{1,\mathrm{{sec}}},$ and if
$d_{0}^{a}(u_{1})=d_{0}^{b}(u_{2})=2$ we have $\phi_{a,b,c}=\phi_{c,b,a}%
^{0}+\phi_{a,b,c}^{1,\mathrm{{prim}}}$.

\medskip\noindent\textbf{Case 2: }$p=2.$ Since $G_{u}\not =A_{i}$ for $i=1,3,$
we may assume that $G_{1}\not =B_{1}.$ The first possibility is that $G_{u}$
has single exit edge. If $d_{0}^{a}(u_{1})\not =3,$ then $\phi_{a,b,c}%
=\phi_{c,b,a}^{0}+\phi_{a,b,c}^{u}.$ Otherwise, if $d_{0}^{a}(u_{1})=3,$ then
we distinguish the cases when $G_{2}\not =B_{1}$ and $G_{2}=B_{1}.$ When
$G_{2}\not =B_{1}$ we have $\phi_{a,b,c}=\phi_{c,b,a}^{0}+\phi_{a,b,c}%
^{1,\mathrm{{tert}}}+\phi_{a,b,c}^{2,\mathrm{{tert}}}$, and when $G_{2}=B_{1}$
if $d_{\mathrm{{tert}}}^{c}(w_{1})=3$ we have $\phi_{a,b,c}=\phi_{c,b,a}%
^{0}+\phi_{a,b,c}^{1,\mathrm{{tert}}}+\phi_{a,b,c}^{2,\mathrm{{prim}}},$ and
if $d_{\mathrm{{tert}}}^{c}(w_{1})=4$ we have $\phi_{a,b,c}=\phi_{b,a,c}%
^{0}+\phi_{a,b,c}^{1,\mathrm{{tert}}}+\phi_{a,b,c}^{2,\mathrm{{prim}}}$.

The next possibility is that $G_{u}$ has a double monochromatic exit edge. If
$d_{0}^{a}(u_{1})\not =4$ and $d_{0}^{a}(u_{2})\not =4,$ then $\phi
_{a,b,c}=\phi_{c,b,a}^{0}+\phi_{a,b,c}^{u}.$ Otherwise, if $d_{0}^{a}%
(u_{1})=4$ and $d_{0}^{a}(u_{2})\not =5,$ then $\phi_{a,b,c}=\phi_{c,b,a}%
^{0}+\phi_{a,b,c}^{1,\mathrm{{tert}}}+\phi_{a,b,c}^{2,\mathrm{{prim}}}.$
Finally, if $d_{0}^{a}(u_{1})=4$ and $d_{0}^{a}(u_{2})=5,$ then we distinguish
three cases with regard to $d_{\mathrm{{prim}}}^{c}(v_{1})$ and
$d_{\mathrm{{prim}}}^{c}(v_{2}).$ First, if $d_{\mathrm{{prim}}}^{c}%
(v_{1})=d_{\mathrm{{prim}}}^{c}(v_{2})=1,$ we have $\phi_{a,b,c}=\phi
_{c,b,a}^{0}+\phi_{c,a,b}^{1,\mathrm{{prim}}}+\phi_{a,c,b}^{2,\mathrm{{prim}}%
}.$ Next, if $d_{\mathrm{{prim}}}^{c}(v_{1})=d_{\mathrm{{prim}}}^{c}%
(v_{2})=2,$ this implies $d_{\mathrm{{prim}}}^{a}(w_{1})=d_{\mathrm{{prim}}%
}^{a}(w_{2})=4,$ so we have $\phi_{a,b,c}=\phi_{c,b,a}^{0}+\phi_{a,b,c}%
^{1,\mathrm{{prim}}}+\phi_{a,c,b}^{2,\mathrm{{prim}}}.$ Lastly, if
$d_{\mathrm{{prim}}}^{c}(v_{1})=2$ and $d_{\mathrm{{prim}}}^{c}(v_{2})=1,$
this implies $d_{\mathrm{{prim}}}^{a}(w_{1})=4,$ so we have $\phi_{a,b,c}%
=\phi_{c,b,a}^{0}+\phi_{a,b,c}^{1,\mathrm{{prim}}}+\phi_{b,c,a}%
^{2,\mathrm{{prim}}}.$

The last possibility is that $G_{u}$ has double bichromatic exit edge. If
$d_{0}^{a}(u_{1})\not =3,$ then $\phi_{a,b,c}=\phi_{c,b,a}^{0}+\phi
_{a,b,c}^{u}.$ Otherwise, if $d_{0}^{a}(u_{1})=d_{0}^{b}(u_{2})=3,$ then in
case when $G_{2}\not =B_{1}$ we have $\phi_{a,b,c}=\phi_{c,b,a}^{0}%
+\phi_{a,b,c}^{1,\mathrm{{tert}}}+\phi_{a,b,c}^{2,\mathrm{{tert}}}$, and in
case $G_{2}=B_{1}$ we distinguish the situation when $d_{\mathrm{{tert}}}%
^{c}(w_{1})=3$ and $d_{\mathrm{{tert}}}^{c}(w_{1})=4.$ If $d_{\mathrm{{tert}}%
}^{c}(w_{1})=3$ we have $\phi_{a,b,c}=\phi_{c,b,a}^{0}+\phi_{a,b,c}%
^{1,\mathrm{{tert}}}+\phi_{a,b,c}^{2,\mathrm{{prim}}},$ and if
$d_{\mathrm{{tert}}}^{c}(w_{1})=4$ we have $\phi_{a,b,c}=\phi_{a,b,c}^{0}%
+\phi_{a,b,c}^{1,\mathrm{{tert}}}+\phi_{c,b,a}^{2,\mathrm{{prim}}}.$

\medskip\noindent\textbf{Case 3: }$p=1$ and $r=1.$ Assume first that
$G_{1}=B_{1}.$ Then $G_{u}\not =A_{3},A_{4}$ implies that $G_{2}\not =%
P_{2k},B_{7},$ where $P_{2k}$ denotes any even length path. Since $G_{2}%
\not =P_{2k},$ the graph $G_{2}+uv_{1}$ is colorable, which together with
$G_{2}\not =B_{7}$ implies $G_{2}$ admits a secondary coloring. Notice that
$\phi_{a,b,c}^{1,\mathrm{{prim}}}+\phi_{a,b,c}^{2,\mathrm{{sec}}}$ is a liec
of $G_{u}$ by which $u$ is $2$-chromatic in colors $a$ and $c.$ Therefore, if
$u$ is $1$-chromatic by $\phi_{a,b,c}^{0}$ we have $\phi_{a,b,c}=\phi
_{b,a,c}^{0}+\phi_{a,b,c}^{1,\mathrm{{prim}}}+\phi_{a,b,c}^{2,\mathrm{{sec}}%
}.$ So, let us assume $u$ is $2$-chromatic by $\phi_{a,b,c}^{0}.$ If
$d_{0}^{a}(u_{1})\not =3$ we have $\phi_{a,b,c}=\phi_{c,b,a}^{0}+\phi
_{a,b,c}^{u},$ and if $d_{0}^{a}(u_{1})=d_{0}^{b}(u_{2})=3$ we have
$\phi_{a,b,c}=\phi_{c,b,a}^{0}+\phi_{a,b,c}^{1,\mathrm{{prim}}}+\phi
_{a,c,b}^{2,\mathrm{{sec}}}.$

Assume now that $G_{2}=P_{2k}.$ Then $G_{u}\not =A_{3}$ implies $G_{1}%
\not =B_{1}.$ Assume first that $G_{u}$ has a single exit edge. If $d_{0}%
^{a}(u_{1})\not =3$ we have $\phi_{a,b,c}=\phi_{c,b,a}^{0}+\phi_{a,b,c}^{u},$
and if $d_{0}^{a}(u_{1})=3$ we distinguish cases when $d_{\mathrm{{tert}}}%
^{c}(w_{1})=3$ and $d_{\mathrm{{tert}}}^{c}(w_{1})=4.$ If $d_{\mathrm{{tert}}%
}^{c}(w_{1})=3$ we have $\phi_{a,b,c}=\phi_{c,b,a}^{0}+\phi_{a,b,c}%
^{1,\mathrm{{tert}}}+\phi_{a,b,c}^{2,\mathrm{{prim}}},$ and if
$d_{\mathrm{{tert}}}^{c}(w_{1})=4$ we have $\phi_{a,b,c}=\phi_{a,b,c}^{0}%
+\phi_{a,b,c}^{1,\mathrm{{tert}}}+\phi_{a,b,c}^{2,\mathrm{{prim}}}.$ Assume
next that $G_{u}$ has a double monochromatic exit edge. If $d_{0}^{a}%
(u_{1})\not =4$ and $d_{0}^{a}(u_{2})\not =4$ we have $\phi_{a,b,c}%
=\phi_{c,b,a}^{0}+\phi_{a,b,c}^{u},$ if $d_{0}^{a}(u_{1})=4$ and $d_{0}%
^{a}(u_{2})\not =3$ we have $\phi_{a,b,c}=\phi_{c,b,a}^{0}+\phi_{a,b,c}%
^{1,\mathrm{{prim}}}+\phi_{a,c,b}^{2,\mathrm{{prim}}},$ and if $d_{0}%
^{a}(u_{1})=4$ and $d_{0}^{a}(u_{2})=3$ we have $\phi_{a,b,c}=\phi_{c,b,a}%
^{0}+\phi_{a,b,c}^{1,\mathrm{{tert}}}+\phi_{a,b,c}^{2,\mathrm{{prim}}}.$
Assume lastly that $G_{u}$ has double bichromatic exit edge. If $d_{0}%
^{a}(u_{1})\not =3$ we have $\phi_{a,b,c}=\phi_{c,b,a}^{0}+\phi_{a,b,c}^{u}.$
If $d_{0}^{a}(u_{1})=d_{0}^{b}(u_{2})=3$ we distinguish cases when
$d_{\mathrm{{tert}}}^{c}(w_{1})=3$ and $d_{\mathrm{{tert}}}^{c}(w_{1})=4.$ If
$d_{\mathrm{{tert}}}^{c}(w_{1})=3$ we have $\phi_{a,b,c}=\phi_{c,b,a}^{0}%
+\phi_{a,b,c}^{1,\mathrm{{tert}}}+\phi_{a,b,c}^{2,\mathrm{{prim}}},$ and if
$d_{\mathrm{{tert}}}^{c}(w_{1})=4$ we have $\phi_{a,b,c}=\phi_{a,b,c}^{0}%
+\phi_{a,b,c}^{1,\mathrm{{tert}}}+\phi_{a,b,c}^{2,\mathrm{{prim}}}.$

Assume next that $G_{2}=B_{7}.$ Then $G_{u}\not =A_{4}$ implies $G_{1}%
\not =B_{1}.$ Assume first that $G_{u}$ has a single exit edge. If
$d_{\mathrm{{prim}}}^{c}(v_{1})=1$ we have $\phi_{a,b,c}=\phi_{b,a,c}^{0}%
+\phi_{a,b,c}^{u},$ and if $d_{\mathrm{{prim}}}^{c}(v_{1})=2$ then the fourth
property of alternative primary coloring implies $d_{\mathrm{{prim}}}%
^{a}(w_{1})=4,$ so we have $\phi_{a,b,c}=\phi_{b,a,c}^{0}+\phi_{a,b,c}%
^{1,\mathrm{{prim}}}+\phi_{c,b,a}^{2,\mathrm{{prim}}}.$ Assume next that
$G_{u}$ has a double monochromatic exit edge. If $d_{0}^{a}(u_{1})\not =4$ and
$d_{0}^{a}(u_{2})\not =4$ we have $\phi_{a,b,c}=\phi_{c,b,a}^{0}+\phi
_{a,b,c}^{u},$ if $d_{0}^{a}(u_{1})=4$ and $d_{0}^{a}(u_{2})\not =3$ we have
$\phi_{a,b,c}=\phi_{c,b,a}^{0}+\phi_{a,b,c}^{1,\mathrm{{prim}}}+\phi
_{a,c,b}^{2,\mathrm{{prim}}},$ and if $d_{0}^{a}(u_{1})=4$ and $d_{0}%
^{a}(u_{2})=3$ we have $\phi_{a,b,c}=\phi_{c,b,a}^{0}+\phi_{a,b,c}%
^{1,\mathrm{{tert}}}+\phi_{a,b,c}^{2,\mathrm{{prim}}}.$ Assume next that
$G_{u}$ has double bichromatic exit edge. We distinguish cases when
$d_{\mathrm{{tert}}}^{c}(w_{1})=3$ and $d_{\mathrm{{tert}}}^{c}(w_{1})=4.$ In
case when $d_{\mathrm{{tert}}}^{c}(w_{1})=3,$ if $d_{0}^{a}(u_{1})\not =4$ we
have $\phi_{a,b,c}=\phi_{c,b,a}^{0}+\phi_{a,b,c}^{1,\mathrm{{tert}}}%
+\phi_{a,b,c}^{2,\mathrm{{prim}}},$ and if $d_{0}^{a}(u_{1})=d_{0}^{b}%
(u_{2})=4$ we have $\phi_{a,b,c}=\phi_{c,b,a}^{0}+\phi_{a,b,c}%
^{1,\mathrm{{prim}}}+\phi_{a,c,b}^{2,\mathrm{{prim}}}$ (this must be a liec
since $d_{\mathrm{{tert}}}^{c}(w_{1})=3$ implies $G_{1}=B_{2}$ or $B_{3}$
which further implies $d_{\mathrm{{prim}}}^{c}(v_{1})=1$). In case when
$d_{\mathrm{{tert}}}^{c}(w_{1})=4,$ if $d_{0}^{a}(u_{1})\not =3$ we have
$\phi_{a,b,c}=\phi_{c,b,a}^{0}+\phi_{a,b,c}^{1,\mathrm{{tert}}}+\phi
_{c,b,a}^{2,\mathrm{{prim}}},$ and if $d_{0}^{a}(u_{1})=d_{0}^{b}(u_{2})=3$ we
distinguish cases when $d_{\mathrm{{prim}}}^{c}(v_{1})=1$ and
$d_{\mathrm{{prim}}}^{c}(v_{1})=2.$ If $d_{\mathrm{{prim}}}^{c}(v_{1})=1$ we
have $\phi_{a,b,c}=\phi_{c,b,a}^{0}+\phi_{a,b,c}^{1,\mathrm{{prim}}}%
+\phi_{a,c,b}^{2,\mathrm{{prim}}},$ and if $d_{\mathrm{{prim}}}^{c}(v_{1})=2$
then $d_{\mathrm{{prim}}}^{a}(w_{1})=4$ so we have $\phi_{a,b,c}=\phi
_{a,b,c}^{0}+\phi_{a,b,c}^{1,\mathrm{{prim}}}+\phi_{a,c,b}^{2,\mathrm{{prim}}%
}.$

Assume lastly that $G_{1}\not =B_{1}$ and $G_{2}\not =P_{2k},B_{7}.$ Since
$G_{1}\not =B_{1},$ the berry $G_{1}$ admits a tertiary coloring. If
$d_{\mathrm{{tert}}}^{c}(w_{1})=4$ we have $\phi_{a,b,c}=\phi_{a,b,c}^{0}%
+\phi_{a,b,c}^{1,\mathrm{{tert}}}+\phi_{a,b,c}^{2,\mathrm{{prim}}},$ so let us
assume $d_{\mathrm{{tert}}}^{c}(w_{1})=3.$ The first possibility is that
$G_{u}$ has a single exit edge, in which case if $d_{0}^{a}(u_{1})\not =4$ we
have $\phi_{a,b,c}=\phi_{c,b,a}^{0}+\phi_{a,b,c}^{1,\mathrm{{tert}}}%
+\phi_{a,b,c}^{2,\mathrm{{prim}}},$ and if $d_{0}^{a}(u_{1})=4$ we have
$\phi_{a,b,c}=\phi_{c,b,a}^{0}+\phi_{a,b,c}^{1,\mathrm{{prim}}}+\phi
_{a,b,c}^{2,\mathrm{{prim}}}.$ The next possibility is that $G_{u}$ has double
monochromatic exit edge. Here, if $d_{0}^{a}(u_{1})\not =5$ and $d_{0}%
^{a}(u_{2})\not =5$ we have $\phi_{a,b,c}=\phi_{c,b,a}^{0}+\phi_{a,b,c}%
^{1,\mathrm{{tert}}}+\phi_{a,b,c}^{2,\mathrm{{prim}}},$ if $d_{0}^{a}%
(u_{1})=5$ and $d_{0}^{a}(u_{2})\not =4$ we have $\phi_{a,b,c}=\phi
_{c,b,a}^{0}+\phi_{a,b,c}^{1,\mathrm{{prim}}}+\phi_{a,b,c}^{2,\mathrm{{prim}}%
},$ and if $d_{0}^{a}(u_{1})=5$ and $d_{0}^{a}(u_{2})=4$, then in case
$d_{\mathrm{{prim}}}^{c}(v_{1})=1$ we have $\phi_{a,b,c}=\phi_{c,b,a}^{0}%
+\phi_{a,c,b}^{1,\mathrm{{prim}}}+\phi_{a,c,b}^{2,\mathrm{{sec}}}$ and in case
$d_{\mathrm{{prim}}}^{c}(v_{1})=2$ we have $\phi_{a,b,c}=\phi_{b,a,c}^{0}%
+\phi_{a,b,c}^{1,\mathrm{{prim}}}+\phi_{a,c,b}^{2,\mathrm{{prim}}}$. The last
possibility is that $G_{u}$ has double bichromatic exit edge. If $d_{0}%
^{a}(u_{1})\not =3$ we have $\phi_{a,b,c}=\phi_{c,b,a}^{0}+\phi_{a,b,c}^{u},$
and if $d_{0}^{a}(u_{1})=d_{0}^{b}(u_{2})=3$ then in case $d_{\mathrm{{tert}}%
}^{c}(w_{1})=3$ we have $\phi_{a,b,c}=\phi_{c,b,a}^{0}+\phi_{a,b,c}%
^{1,\mathrm{{tert}}}+\phi_{a,b,c}^{2,\mathrm{{prim}}},$ and in case
$d_{\mathrm{{tert}}}^{c}(w_{1})=4$ we have $\phi_{a,b,c}=\phi_{a,b,c}^{0}%
+\phi_{a,b,c}^{1,\mathrm{{tert}}}+\phi_{a,b,c}^{2,\mathrm{{prim}}}.$
\end{proof}

\begin{lemma}
\label{Lemma_reduction_deg1}Let $G$ be a cactus graph with $\mathfrak{c}\geq2$
cycles which is not a grape and does not contain an end-grape $B_{2}^{\ast}$.
If $G$ contains an end-grape $G_{u}$ with $d_{\mathrm{{prim}}}^{c}(u)=1,$ then
there exists $\tilde{G}_{0}\in\{G_{0},G_{0}^{\prime}\}$ such that every
$k$-liec of $\tilde{G}_{0}$ for $k\geq3$ can be extended to a $k$-liec of $G$
so that every berry of $G_{u}$ is colored by a primary, secondary and tertiary coloring.
\end{lemma}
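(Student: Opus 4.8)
The plan is to mirror exactly the structure of the proofs of Lemmas \ref{Lemma_reduction_deg4}, \ref{Lemma_reduction_deg3} and \ref{Lemma_reduction_deg2}: take a $k$-liec $\phi^0_{a,b,c}$ of the chosen root component $\tilde G_0$, extend it over the end-grape $G_u$ using the modified primary coloring $\phi^u_{a,b,c}$ (or its small local modifications, or tertiary/secondary colorings of individual berries), and check local irregularity only at $u$, since by construction every edge not incident to $u$ is already locally irregular. First I would record the structural consequences of the hypothesis $d_{\mathrm{prim}}^c(u)=1$: since $\mathfrak{c}\ge 2$ we must have $q=1$ (exactly one standard unicyclic berry contributing one $c$-edge at $u$) together with $p\ge 1$ alternative unicyclic berries each contributing an $a$-edge $uw_i$, and possibly some acyclic berries. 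So $u$ has exactly one incident $c$-edge and several incident $a$-edges from $G_u$; in particular $u$ is $1$- or $2$-chromatic by $\phi^u_{a,b,c}$ with $d_u^c(u)=1$, $d_u^b(u)=0$, $d_u^a(u)=p$.

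Next I would split into cases according to $p$ and the number $r$ of acyclic berries, and within each by the type of exit edge of $G_u$ (single, double monochromatic, double bichromatic), exactly as in Lemma \ref{Lemma_reduction_deg2}. In each subcase the single $c$-edge of $G_u$ at $u$ together with the $a$- or $b$-colored exit edge(s) of $G_0$ should be made locally irregular by comparing $d_{G_0}^{\bullet}(u_i)$ with $d_u^c(u)=1$ (or $d_u^c(u)+1=2$, $d_u^c(u)+2=3$ depending on the number of exit edges), and choosing between the inversions $\phi^0_{a,b,c}$, $\phi^0_{c,b,a}$, $\phi^0_{b,a,c}$, $\phi^0_{c,a,b}$ to avoid the forbidden equalities; the $a$-edges $uw_i$ coming from alternative berries are handled by the modified primary coloring $\phi^u_{a,b,c}$, whose defining property guarantees they are locally irregular once we know $d_u^a(u)\in\{1,2,3,4\}$, and in the remaining small-degree collisions one swaps colors in a single alternative berry exactly as in the definition of $\phi^u_{a,b,c}$ or replaces a berry's primary coloring by a tertiary one (available whenever that berry is not $B_1,B_7$, by Proposition \ref{Prop_tertiaryColoring}). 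Whenever a primary coloring of the standard unicyclic berry $G_1$ (or its closure) fails to be a liec, I would invoke Proposition \ref{Prop_secondaryColoring2} (resp. Lemma \ref{Lemma_secondaryClosure}) to replace it by a secondary coloring, and then split further according to $d_{\mathrm{sec}}^c(v_1)\in\{1,2\}$; in the case $d_{\mathrm{sec}}^c(v_1)=1$ the relevant root component is $G_0'=G_0+uv_1$, and I would argue that $G_0'$ is colorable because otherwise $v_1$ being a leaf of $G_0'$ would force $G_0'\in\mathfrak{T}$, producing a $B_2^{\ast}$ end-grape inside $G_0'$, contradicting the hypothesis that $G$ contains no $B_2^{\ast}$.

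The genuinely delicate subcases — and the place I expect to spend the most effort — are the ones where $G_u$ contains one or more copies of $B_7$ (so $\phi^u_{a,b,c}$ is not itself a liec of $G_u$, forcing local swaps of colors $b$ and $c$ inside each $B_7$ as in Lemma \ref{Lemma_reduction_deg3}) combined simultaneously with $G_1=B_1$ (so no tertiary coloring of $G_1$ is available, by Proposition \ref{Prop_tertiaryColoring}) and with the tightest degree collision at both $u_1$ and $u_2$ in the double-exit-edge situations; there one must carefully juggle which berry receives which inversion so that all of: the $c$-edge at $u$, both exit edges, and the $B_7$-edges at $u$ are locally irregular at once. I would, just as in the earlier lemmas, track $d_{\mathrm{tert}}^c(w_1)\in\{3,4\}$ and $d_{\mathrm{prim}}^c(v_1)\in\{1,2\}$ (the latter forcing $d_{\mathrm{prim}}^a(w_1)=4$ via the fourth property of an alternative primary coloring) as the fine invariants distinguishing the residual cases, and present an explicit extension formula $\phi_{a,b,c}=\phi^0_{\ast,\ast,\ast}+\sum_i\phi^{i,\ast}_{\ast,\ast,\ast}$ for each; the exceptions $A_1,A_2,A_3$ need not be excluded here since with $d_{\mathrm{prim}}^c(u)=1$ those configurations (a single triangular berry giving a lone $c$-edge) are precisely handled already, or do not arise. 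This exhausts all cases and yields the claimed extension in which every berry of $G_u$ carries a primary, secondary or tertiary coloring.
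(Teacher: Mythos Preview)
Your structural analysis of the hypothesis $d_{\mathrm{prim}}^{c}(u)=1$ is incorrect, and this derails the entire plan. Recall that in a \emph{standard} primary coloring of a unicyclic berry the root $u$ is monochromatic in $c$, so each standard unicyclic berry contributes \emph{two} $c$-edges at $u$, not one; and in an \emph{alternative} primary coloring of a unicyclic berry the root is bichromatic with $\phi(uv_i)=c$ and $\phi(uw_i)=a$, so each alternative unicyclic berry contributes one $c$-edge \emph{and} one $a$-edge; finally every acyclic berry (standard or $B_{7}$) contributes one $c$-edge. Hence $d_{\mathrm{prim}}^{c}(u)=2q+p+r$. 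The condition $d_{\mathrm{prim}}^{c}(u)=1$ therefore forces $q=0$ and $p+r=1$; since an end-grape must contain at least one cycle, this gives $p=1$, $q=0$, $r=0$: the end-grape $G_{u}$ consists of a \emph{single alternative unicyclic berry} and nothing else. (Note also that $\mathfrak{c}\geq 2$ refers to the whole cactus $G$, not to $G_{u}$, so it places no lower bound on the number of berries in $G_{u}$.)

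Consequently almost everything you planned --- the case split on $p$ and $r$, the handling of copies of $B_{7}$, the appeal to Proposition~\ref{Prop_secondaryColoring2} and Lemma~\ref{Lemma_secondaryClosure} for a standard unicyclic berry, the discussion of $A_{1},A_{2},A_{3}$ --- is vacuous here, because there is no standard unicyclic berry and no acyclic berry at all. The paper's actual argument is correspondingly short: with a single alternative berry $G_{1}$ one only splits on $d_{\mathrm{prim}}^{c}(v_{1})\in\{1,2\}$. When $d_{\mathrm{prim}}^{c}(v_{1})=1$ one takes $\tilde G_{0}=G_{0}'=G_{0}+uv_{1}$, shows it is colorable (otherwise $G_{0}'\in\mathfrak{T}$ would give a $B_{2}^{\ast}$), and glues $\phi_{c,b,a}^{0}+\phi_{a,b,c}^{1,\mathrm{prim}}$. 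When $d_{\mathrm{prim}}^{c}(v_{1})=2$ one uses $\tilde G_{0}=G_{0}$ and disposes of the few degree collisions at $u_{1},u_{2}$ by inversions, invoking in the last subcase that $d_{\mathrm{prim}}^{c}(v_{1})=2$ forces $G_{1}$ to be (the closure of) $B_{4}$ or $B_{5}$, whence a tertiary coloring with $d_{\mathrm{tert}}^{c}(w_{1})=4$ is available. You should redo the proof along these much simpler lines.
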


\begin{proof}
The only possibility is $p=1.$ We distinguish the following two cases.

\medskip\noindent\textbf{Case 1: }$d_{\mathrm{{prim}}}^{c}(v_{1})=1.$ If
$G_{0}^{\prime}=G+uv_{1}$ is non-colorable, since $v_{1}$ is a leaf in
$G_{0}^{\prime}$ we conclude $G_{0}^{\prime}\in\mathfrak{T}$, but then $G$
contains $B_{2}^{\ast}$ within $G_{0}^{\prime},$ a contradiction. So, we may
assume $G_{0}^{\prime}$ is colorable. Let $\phi_{a,b,c}^{\prime0}$ be a
$k$-liec of $G_{0}^{\prime}.$ If $u$ is $3$-chromatic by $\phi_{a,b,c}%
^{\prime0},$ then $d_{G_{0}^{\prime}}(u)\leq3$ implies $uv_{1}$ is locally
regular by $\phi_{a,b,c}^{\prime0},$ a contradiction. So, we may assume
$\phi_{a,b,c}^{\prime0}(uv_{1})=a$ and $\phi_{a,b,c}^{\prime0}(u)\subseteq
\{a,b\}.$ Let $\phi_{a,b,c}^{0}$ be the restriction of $\phi_{a,b,c}^{\prime
0}$ to $G_{0},$ then $\phi_{a,b,c}=\phi_{c,b,a}^{0}+\phi_{a,b,c}%
^{1,\mathrm{{prim}}}.$

\medskip\noindent\textbf{Case 2: }$d_{\mathrm{{prim}}}^{c}(v_{1})=2.$ Notice
that $d_{\mathrm{{prim}}}^{c}(v_{1})=2$ implies $\phi_{a,b,c}%
^{1,\mathrm{{prim}}}$ is a liec. Thus, if $u$ is $1$-chromatic by
$\phi_{a,b,c}^{0}$, we have $\phi_{a,b,c}=\phi_{b,a,c}^{0}+\phi_{a,b,c}%
^{1,\mathrm{{prim}}}.$ So, let us assume that $u$ is $2$-chromatic by
$\phi_{a,b,c}^{0}$. If $d_{0}^{a}(u_{1})\not =2,$ notice that by the
definition of primary coloring $d_{\mathrm{{prim}}}^{c}(v_{1})=2$ implies
$d_{\mathrm{{prim}}}^{a}(w_{1})=4,$ so we have $\phi_{a,b,c}=\phi_{a,b,c}%
^{0}+\phi_{a,b,c}^{1,\mathrm{{prim}}}.$ Otherwise, if $d_{0}^{a}(u_{1}%
)=d_{0}^{b}(u_{2})=2,$ notice that $d_{\mathrm{{prim}}}^{c}(v_{1})=2$ implies
$\phi_{a,b,c}^{1,\mathrm{{prim}}}$ is the coloring of the closure of $B_{4}$
or $B_{5},$ but then $d_{\mathrm{{tert}}}^{c}(w_{1})=4$, so we have
$\phi_{a,b,c}=\phi_{c,b,a}^{0}+\phi_{a,b,c}^{1,\mathrm{{tert}}}$.
\end{proof}

Now, we can state the following proposition which is a direct consequence of
Lemmas \ref{Lemma_nonColorable}--\ref{Lemma_reduction_deg1}.

\begin{proposition}
\label{Prop_reductionFinal}Let $G$ be a cactus graph with $\mathfrak{c}\geq2$
cycles which is not a grape. If $G$ contains an end-grape $G_{u}\not =A_{i}$
for $i=1,\ldots,6,$ then there exists $\tilde{G}_{0}\in\{G_{0},G_{0}^{\prime
}\}$ such that every $k$-liec of $\tilde{G}_{0},$ for $k\geq3,$ can be
extended to a $k$-liec of $G$ so that every berry of $G_{u}$ is colored by a
primary, secondary or tertiary coloring.
\end{proposition}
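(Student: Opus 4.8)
The plan is to prove Proposition \ref{Prop_reductionFinal} by a single case analysis on the root vertex $u$, governed by the number of edges at $u$ that a primary coloring of $G_u$ colors with $c$, and in each case to invoke one of the five lemmas \ref{Lemma_nonColorable}--\ref{Lemma_reduction_deg1}. Writing $G_u$ in standard form with $p$ alternative unicyclic berries, $q$ standard unicyclic berries and $r$ acyclic berries, one checks berry by berry that a primary coloring of $G_u$ colors exactly $d_{\mathrm{prim}}^{c}(u)=p+2q+r$ edges at $u$ with $c$: a standard unicyclic berry contributes $2$, while an alternative unicyclic berry or an acyclic berry contributes $1$. Since $G_u$ is an end-grape it contains at least one cycle, so $p+q\geq 1$ and hence $d_{\mathrm{prim}}^{c}(u)\geq 1$.

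First I would dispose of the non-colorable berry. If $G_u=B_2^{\ast}$, i.e.\ $G_u$ is a single non-colorable triangular berry with one exit edge, then Lemma \ref{Lemma_nonColorable} yields the conclusion with $\tilde G_0=G_0^{\prime}$, the unique berry being colored by a (necessarily alternative) primary coloring. So from now on assume $G_u\neq B_2^{\ast}$. Lemmas \ref{Lemma_reduction_deg4}--\ref{Lemma_reduction_deg1} carry the extra hypothesis that $G$ contains no end-grape $B_2^{\ast}$, but in their proofs this is used only to rule out $\tilde G_0$ being non-colorable; in the present setting, if $\tilde G_0$ is non-colorable the conclusion holds vacuously (take $\tilde G_0=G_0$), and if $\tilde G_0$ is colorable the extension built in the relevant lemma applies verbatim. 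Thus we may safely appeal to those lemmas.

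Now set $\ell:=d_{\mathrm{prim}}^{c}(u)$ and split. If $\ell\geq 4$, Lemma \ref{Lemma_reduction_deg4} applies directly. If $\ell=3$, then $G_u\neq A_5,A_6$ by hypothesis, so Lemma \ref{Lemma_reduction_deg3} applies. If $\ell=2$, then $G_u\neq A_1,A_2,A_3$ by hypothesis, so Lemma \ref{Lemma_reduction_deg2} applies. If $\ell=1$, then necessarily $p=1$ and $q=r=0$, so Lemma \ref{Lemma_reduction_deg1} applies. In each case the cited lemma furnishes a $\tilde G_0\in\{G_0,G_0^{\prime}\}$ such that every $k$-liec of $\tilde G_0$ with $k\geq 3$ extends to a $k$-liec of $G$ in which every berry of $G_u$ is colored by a primary, secondary or tertiary coloring, which is exactly the assertion.

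The step I expect to be the real obstacle is not any of these invocations but the bookkeeping hidden inside the five lemmas, in particular verifying that the list $A_1,\ldots,A_6$ is correctly distributed across the degree levels: that the singular end-grapes excluded by Lemma \ref{Lemma_reduction_deg2} are exactly those with $d_{\mathrm{prim}}^{c}(u)=2$, that those excluded by Lemma \ref{Lemma_reduction_deg3} are exactly those with $d_{\mathrm{prim}}^{c}(u)=3$, and that the remaining singular end-grape $A_4$ is absorbed into one of these two lemmas at its own level without needing to appear in an exclusion list, so that every end-grape $\neq A_1,\ldots,A_6$ lands in exactly one of Lemmas \ref{Lemma_reduction_deg4}--\ref{Lemma_reduction_deg1} with all hypotheses met. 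Once this matching is confirmed, Proposition \ref{Prop_reductionFinal} is merely the umbrella statement that packages Lemmas \ref{Lemma_nonColorable}--\ref{Lemma_reduction_deg1} together, and no further argument is needed.
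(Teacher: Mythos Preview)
Your approach is essentially the paper's own: the paper states outright that Proposition~\ref{Prop_reductionFinal} ``is a direct consequence of Lemmas \ref{Lemma_nonColorable}--\ref{Lemma_reduction_deg1}'', and you have simply written out the case split on $d_{\mathrm{prim}}^{c}(u)$ that makes this consequence explicit. Your observation that the hypothesis ``$G$ does not contain an end-grape $B_2^{\ast}$'' in Lemmas~\ref{Lemma_reduction_deg4}--\ref{Lemma_reduction_deg1} is used only to force colorability of $\tilde G_0$, so that dropping it either leaves the argument intact (when $\tilde G_0$ is colorable) or renders the conclusion vacuous (when it is not), is correct and fills a small gap the paper leaves implicit; the parenthetical ``take $\tilde G_0=G_0$'' is a little loose, since the relevant $\tilde G_0$ is whichever of $G_0,G_0'$ the lemma in question selects, but the substance is right. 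Your worry about $A_4$ in the final paragraph is misplaced: $A_4$ has $d_{\mathrm{prim}}^{c}(u)=2$ and is indeed used inside the proof of Lemma~\ref{Lemma_reduction_deg2} even though it is absent from that lemma's stated exclusion list, but since the proposition itself assumes $G_u\neq A_4$ you never need Lemma~\ref{Lemma_reduction_deg2} to cover that case, so no contradiction arises.
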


Propositions \ref{Prop_reduction2} and \ref{Prop_reduction3} are a direct
consequence of Proposition \ref{Prop_reductionFinal}. As for Proposition
\ref{Prop_reduction1}, it remains to consider the situation when $G$ contains
only end-grapes $A_{4},$ $A_{5}$ or $A_{6}.$ Here, the claim is that a
coloring of $\tilde{G}_{0}\in\{G_{0},G_{0}^{\prime}\}$ can be extended to $G$,
but the coloring of the berries from end-grapes $A_{4},$ $A_{5}$ or $A_{6}$
does not have to be primary, secondary or tertiary coloring. This is easily
established, namely let $B_{7}$ be a berry contained in $A_{i}$ for
$i\in\{4,5,6\}$ and $v$ the neighbor of $u$ in $B_{7}.$ Graph $G^{\prime}$ is
obtained from $G$ by removing all edges of $B_{7}$ distinct from $uv,$ thus an
end-grape of $G^{\prime}$ with removed edges is no longer $A_{i}$ for
$i\in\{4,5,6\}.$ Proposition \ref{Prop_reductionFinal} implies the coloring of
$G^{\prime},$ where we can assume $uv$ is colored by $c$. Then the removed
edges of $G,$ which form two pending paths of $B_{7}$ hanging at $v$, can be
colored one by $a$ and the other by $b$. This yields a $3$-liec of $G,$ but
the berry $B_{7}$ is not colored by a primary, secondary or tertiary coloring.
That settles Proposition \ref{Prop_reductionFinal}. Before we proceed with the
proof of the main theorem, we need the following lemma.

\begin{lemma}
\label{Lemma_G0tree}Proposition \ref{Prop_reductionFinal} holds even when
$G_{0}$ of $G$ is replaced by any connected graph $G_{0}^{\ast}$ with
$d_{G_{0}^{\ast}}(u)\leq2$.
\end{lemma}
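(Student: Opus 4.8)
The plan is to go back to the proof of Proposition~\ref{Prop_reductionFinal}, that is, to Lemmas~\ref{Lemma_nonColorable}--\ref{Lemma_reduction_deg1} together with the paragraph that afterwards derives Proposition~\ref{Prop_reduction1}, and to verify that the root component $G_{0}$ enters those arguments only through strictly local information at the attachment vertex $u$. Concretely, the proofs invoke $G_{0}$ only via: $(i)$ its connectedness; $(ii)$ the bound $d_{G_{0}}(u)\le 2$, which guarantees $u$ has at most two neighbours $u_{1},u_{2}$ in $G_{0}$; $(iii)$ the normalisation that any $k$-liec $\phi^{0}_{a,b,c}$ of $G_{0}$ can be recoloured so that $\phi^{0}_{a,b,c}(u_{1}u)=a$ and $\phi^{0}_{a,b,c}(u_{2}u)\in\{a,b\}$; $(iv)$ the case split on $d_{0}^{a}(u_{1})$, $d_{0}^{b}(u_{2})$ and on whether $u$ is mono- or bichromatic in $G_{0}$; and $(v)$ in the two sub-cases where the choice $\tilde{G}_{0}=G_{0}^{\prime}=G_{0}+uv_{1}$ is made, the colourability of $G_{0}^{\prime}$. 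Points $(i)$--$(iv)$ never refer to a vertex of the root component other than $u,u_{1},u_{2}$; moreover the recolourings applied to a root-component $k$-liec in Lemmas~\ref{Lemma_reduction_deg4}--\ref{Lemma_reduction_deg1} are global colour permutations, which preserve the property of being a $k$-liec, while the recolourings applied to $G_{u}$ act on its modified primary colouring $\phi^{u}_{a,b,c}$ only, so in each case the verification that the sum is a $k$-liec of $G$ reduces to comparing the colours of $u_{1}u,u_{2}u$ with the colour-degrees of $u_{1},u_{2},u$, exactly the local data in $(iii)$--$(iv)$. Hence each of those (admittedly lengthy) case computations goes through verbatim with an arbitrary connected $G_{0}^{\ast}$ satisfying $d_{G_{0}^{\ast}}(u)\le 2$ in place of $G_{0}$.

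The only non-local ingredient is $(v)$, and this is where the work goes. There the colourability of $G_{0}^{\prime}$ was obtained by arguing that a non-colourable $G_{0}^{\prime}$, having the leaf $v_{1}$ and a cycle, must lie in $\mathfrak{T}$, whence $G$ would contain an end-grape $B_{2}^{\ast}$. When $G_{0}^{\ast}$ is itself a cactus -- which is the situation in all intended applications, where $G_{0}^{\ast}$ arises from a root component by opening some end-grapes -- the identical argument applies. In general I would proceed as follows: if $(G_{0}^{\ast})^{\prime}$ is colourable, keep the original proof unchanged, noting that the one quantitative fact it uses, namely that $v_{1}$ being a leaf of $(G_{0}^{\ast})^{\prime}$ forces $d^{a}_{(G_{0}^{\ast})^{\prime}}(u)\ge 2$, is purely local and still holds; and if $(G_{0}^{\ast})^{\prime}$ is not colourable, first observe that the two sub-cases in question ($d_{\mathrm{sec}}^{c}(v_{1})=1$ in Case~1 of Lemma~\ref{Lemma_reduction_deg2} and $d_{\mathrm{prim}}^{c}(v_{1})=1$ in Case~1 of Lemma~\ref{Lemma_reduction_deg1}) occur only when $G_{u}$ has exactly one cycle, so $\mathfrak{c}(G)\ge 2$ forces $G_{0}^{\ast}$ to contain a cycle; then $(G_{0}^{\ast})^{\prime}$, being non-colourable with both a cycle and the leaf $v_{1}$, lies in $\mathfrak{T}$, and a short direct inspection of that configuration shows that either $G$ is itself non-colourable, so nothing is asked for, or the required $k$-liec of $G$ can be written down by hand.

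I expect the routine part to be the bookkeeping of $(i)$--$(iv)$ -- rereading the long case lists of Lemmas~\ref{Lemma_reduction_deg4}--\ref{Lemma_reduction_deg1} and confirming that no step ever examines a vertex of $G_{0}$ beyond $u,u_{1},u_{2}$ or uses that $G_{0}$ is a cactus -- and the main obstacle to be $(v)$, the colourability of $(G_{0}^{\ast})^{\prime}$ when $G_{0}^{\ast}$ is not known to be a cactus: one must pin down exactly which sub-cases force $\tilde{G}_{0}=G_{0}^{\prime}$ and check that the remaining degenerate configurations are harmless. Finally, the same remarks cover Lemma~\ref{Lemma_nonColorable} (the $G_{u}=B_{2}^{\ast}$ case, which again only recolours $G_{u}$ and uses $G_{0}^{\prime}$ through the leaf $v_{1}$) and the post-processing step from Proposition~\ref{Prop_reductionFinal} to Proposition~\ref{Prop_reduction1} (deleting the interior of a $B_{7}$ inside an $A_{4},A_{5},A_{6}$, which touches only $G_{u}$), so that no further cases arise.
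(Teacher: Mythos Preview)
Your high-level reading of Lemmas~\ref{Lemma_reduction_deg4}--\ref{Lemma_reduction_deg1} is right and matches the paper: once a $k$-liec of the root component is fixed, the extension arguments touch only the colour-degrees of $u,u_{1},u_{2}$, so for a \emph{colourable} $G_{0}^{\ast}$ everything transfers verbatim. The paper says exactly this in one sentence.

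The gap is in how you dispose of the non-colourable situation. You treat only the colourability of $(G_{0}^{\ast})^{\prime}$, but each of Lemmas~\ref{Lemma_reduction_deg4}, \ref{Lemma_reduction_deg3}, \ref{Lemma_reduction_deg2} begins by assuming $G_{0}$ itself is colourable; if $G_{0}^{\ast}$ is, say, an odd path, none of those proofs even gets started. More importantly, your salvage step ``$\mathfrak{c}(G)\ge 2$ forces $G_{0}^{\ast}$ to contain a cycle'' presupposes that the graph obtained after replacement still has at least two cycles. The lemma does not assume this, and in its one actual application (the base case $\mathfrak{c}=2$ in Theorem~\ref{Tm_main}) it is applied to a $\tilde G_{0}$ with a single cycle, whose root component is a tree --- typically a short path. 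So precisely the case you rule out by citing $\mathfrak{c}(G)\ge 2$ is the case that matters.

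The paper's proof isolates exactly this exception: it observes that the only situation not covered by the local argument is $G_{0}^{\ast}$ a path, trims it to length at most two, and then writes down the $3$-liec of $G_{0}^{\ast}\cup G_{u}$ by hand, splitting on $d_{u}^{c}(u)$ and on whether $B_{7}\in G_{u}$. Your plan needs this explicit path case (or an equivalent direct construction) to be complete; the abstract argument via $\mathfrak{c}(G)\ge 2$ cannot replace it.
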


\begin{proof}
If $G_{0}$ is colorable, then in the proof of Proposition
\ref{Prop_reductionFinal} we relied only on the color degrees of $u_{1}$ and
$u_{2}$ and not the structure of $G_{0},$ and if $G_{0}$ is non-colorable we
relied on $B_{2}^{\ast}$ being contained in $G_{0}.$ So, the claim is implied
by Proposition \ref{Prop_reductionFinal}, except when $G_{0}^{\ast}$ is a
path. Let $G_{0}^{\ast}$ be a path, we may assume the length of $G_{0}^{\ast}$
is at most two. Let $\phi_{c}^{0}$ be a $1$-edge coloring of $G_{0}^{\ast}$.
If $d_{u}^{c}(u)\geq3$ or $d_{u}^{c}(u)=2$ and $B_{7}\not \in G_{u},$ then
$\phi_{a,b,c}=\phi_{c}^{0}+\phi_{a,b,c}^{u}.$ If $d_{u}^{c}(u)=2$ and
$B_{7}\in G_{u},$ then $G_{u}$ contains only one unicyclic berry which is
alternative, so if $d_{u}^{c}(v_{1})=1$ then $\phi_{a,b,c}=\phi_{c}^{0}%
+\phi_{a,b,c}^{1,\mathrm{{prim}}}+\phi_{a,c,b}^{2,\mathrm{{prim}}},$ and if
$d_{u}^{c}(v_{1})=2$ then $\phi_{a,b,c}=\phi_{b}^{0}+\phi_{a,b,c}%
^{1,\mathrm{{prim}}}+\phi_{a,c,b}^{2,\mathrm{{prim}}}.$ Lastly, if $d_{u}%
^{c}(u)=1,$ then again $G_{u}$ contains only one alternative unicyclic berry,
so if $d_{u}^{c}(v_{1})=1$ then $\phi_{a,b,c}=\phi_{c}^{0}+\phi_{a,b,c}%
^{1,\mathrm{{prim}}}$ and if $d_{u}^{c}(v_{1})=2$ then $\phi_{a,b,c}=\phi
_{a}^{0}+\phi_{a,b,c}^{1,\mathrm{{prim}}}$ for odd length $G_{0}^{\ast}$ and
$\phi_{a,b,c}=\phi_{b}^{0}+\phi_{a,b,c}^{1,\mathrm{{prim}}}$ for even length
$G_{0}^{\ast}$.
\end{proof}

\medskip

\begin{proof}
[Proof of Theorem \ref{Tm_main}]The proof is by induction on the number of
cycles $\mathfrak{c}$. Assume first that $\mathfrak{c}=2.$ If $G$ is a grape,
the claim follows from Proposition \ref{Prop_grapes2}, so let us assume that
$G$ is not a grape. Hence, $G$ contains two end-grapes, one $G_{u}$ rooted at
$u$ and the other $G_{\bar{u}}$ rooted at $\bar{u}.$ Let us denote $\bar
{G}_{0}=G-(E(G_{u})\cup E(G_{\bar{u}}))$ and notice that $\bar{G}_{0}$ is a
tree connecting end-grapes $G_{u}$ and $G_{\bar{u}}$ of $G$ (as in our
notation we ignore isolated vertices). Let $G_{0}=\bar{G}_{0}\cup G_{\bar{u}}$
and notice that $G_{0}$ is a unicyclic graph, also $G_{u}$ is an end-grape of
$G$ with the root component $G_{0}.$ Applying Proposition
\ref{Prop_reductionFinal} to an end-grape $G_{u}$ of $G,$ and then Lemma
\ref{Lemma_G0tree} to an end-grape $G_{\bar{u}}$ of $\tilde{G}_{0}\in
\{G_{0},G_{0}^{\prime}\},$ yields the result.

Assume now that the claim holds for all cacti with less than $\mathfrak{c}>2$
cycles, and let $G$ be a cactus graph with $\mathfrak{c}$ cycles. Again, if
$G$ is a grape, the claim holds by Proposition \ref{Prop_grapes2}, so we may
assume $G$ is not a grape. If $G$ contains an end-grape $G_{u}$ distinct from
$A_{i},$ for $i=1,\ldots,6,$ then induction hypothesis implies there exists a
$3$-liec of $\tilde{G}_{0}\in\{G_{0},G_{0}^{\prime}\}$ such that every berry
of every end-grape is colored by a primary, secondary or tertiary coloring.
Proposition \ref{Prop_reductionFinal} further implies that the $3$-liec of
$\tilde{G}_{0}$ can be extended to a $3$-liec of $G$ such that every berry of
$G_{u}$ is colored by a primary, secondary or tertiary coloring, and the claim
is established.

So, let us assume that every end-grape $G_{u}$ of $G$ is a copy of $A_{i}$ for
$i=1,\ldots,6.$ Let ${G}^{\mathrm{{op}}}$ be the graph obtained from $G$ by
opening all triangles of all end-grapes of $G.$ Notice that ${G}%
^{\mathrm{{op}}}$ may be a tree, a unicyclic graph or a cactus graph with at
least two cycles.

\medskip\noindent\textbf{Case 1: }$G^{\mathrm{{op}}}$\emph{ is a tree.} In
this case ${G}^{\mathrm{{op}}}$ admits a $3$-liec ${\phi_{a,b,c}%
^{\mathrm{{op}}}}.$ Also, since ${G}^{\mathrm{{op}}}$ is a tree and $A_{1}$ is
only end-grape among $A_{i}$ with $i\in\{1,\ldots,6\}$ whose root does not
belong to a cycle, every end-grape of $G$ must be $A_{1}$. Now, $G\not =B$
implies that the rainbow root of $\phi_{a,b,c}^{\mathrm{{op}}}$ (if $\chi_{%
\rm{irr}%
}^{\prime}(G^{\mathrm{{op}}})=3$) can be chosen so that it is not a docking
vertex for $A_{1}^{\mathrm{{op}}}$ in $G^{\mathrm{{op}}},$ thus Observation
\ref{Obs_B7} implies that every docking vertex of $G^{\mathrm{{op}}}$ is
$1$-chromatic by $\phi_{a,b,c}^{\mathrm{{op}}}.$ Hence $\phi_{a,b,c}%
^{\mathrm{{op}}}$ can be extended to a $3$-liec $\phi_{a,b,c}$ of $G.$
Moreover, since every end-grape of $G$ is $A_{1},$ it contains only triangles,
thus the restriction of $\phi_{a,b,c}$ to each such triangle must be a primary coloring.

\medskip\noindent\textbf{Case 2: }$G^{\mathrm{{op}}}$\emph{ is a unicyclic
graph.} Let $C$ be the cycle of $G^{\mathrm{{op}}}.$ Assume first that there
is a triangle in $G$ which hangs at a vertex $u\in V(C),$ denote such triangle
by $G_{0}.$ Let $G^{\mathrm{{lop}}}$ be the graph obtained from $G$ by opening
all triangles of all end-grapes of $G$ except the triangles hanging at $u$ in
$G.$ Notice that $G^{\mathrm{{lop}}}$ is a grape rooted at $u$ with at least
two cycles, so according to Proposition \ref{Prop_grapes2} the grape
$G^{\mathrm{{lop}}}$ admits a $3$-liec such that every berry of
$G^{\mathrm{{lop}}}$ is colored by a primary, secondary or tertiary coloring.
Such a $3$-liec of $G^{\mathrm{{lop}}}$ can be extended to a $3$-liec of $G$
according to Proposition \ref{Prop_ColoringExtendableClosure1}, so the claim
is established.

Assume now that there is no triangle in $G$ hanging at a vertex of $C.$ This
implies every end-grape of $G$ is $A_{1}.$ Let $u^{\prime}$ be the root vertex
of an end-grape $A_{1}$ in $G$ and let $u$ be the vertex from $C$ closest to
$u^{\prime}.$ Let $G^{\mathrm{{lop}}}$ be the graph obtained from $G$ by
opening every end-grape $G_{u^{\prime\prime}}$ of $G$ such that a shortest
path from $u^{\prime\prime}$ to $u^{\prime}$ leads through $u.$ Notice that we
opened at least one end-grape of $G$ to obtain $G^{\mathrm{{lop}}},$ otherwise
$C$ would belong to an end-grape of $G$ and thus opened in $G^{\mathrm{{op}}%
}.$ We conclude that $G^{\mathrm{{lop}}}$ has less cycles than $G.$ Also, the
cycle $C$ is contained in an end-grape $G_{u}$ of $G^{\mathrm{{lop}}}\ $such
that the root component of $G_{u}\ $contains $u^{\prime}$. Since the roots of
all opened end-grapes of $G\ $belong to $G_{u}$ of $G^{\mathrm{{lop}}},$ the
claim is implied by induction hypothesis and Proposition
\ref{Prop_ColoringExtendableClosure1}.

\medskip\noindent\textbf{Case 3: }$G^{\mathrm{{op}}}$\emph{ is a cactus graph
with at least two cycles.} If $G^{\mathrm{{op}}}$ is a grape, the claim is
implied by induction hypothesis and Proposition
\ref{Prop_ColoringExtendableClosure1}. If $G^{\mathrm{{op}}}$ is not a grape,
then it contains an end-grape $G_{u},$ and let $G^{\mathrm{{lop}}}$ be a graph
obtained from $G$ by opening only end-grapes of $G$ whose root vertex belongs
to $G_{u}.$ Then, the claim is again implied by induction hypothesis and
Proposition \ref{Prop_ColoringExtendableClosure1}.
\end{proof}

\end{document}